\newtheorem{theorem}{Theorem}[section]
\newtheorem{lemma}[theorem]{Lemma}
\newtheorem{proposition}[theorem]{Proposition}
\newtheorem{corollary}[theorem]{Corollary}
\newtheorem{question}[theorem]{Question}
\newtheorem{letterthm}{Theorem}
\newtheorem{lettercor}[letterthm]{Corollary}
\newtheorem*{rep@theorem}{\rep@title}
\newcommand{\newreptheorem}[2]{%
\newenvironment{rep#1}[1]{%
 \def\rep@title{#2 \ref{##1}}%
 \begin{rep@theorem}}%
 {\end{rep@theorem}}}
\theoremstyle{definition}
\newtheorem{definition}[theorem]{Definition}
\theoremstyle{remark}
\newtheorem{remark}[theorem]{Remark}
\newtheorem{example}[theorem]{Example}
\renewcommand{\P}{\mathcal{P}}
\newcommand{\Q}{\mathbb{Q}}
\newcommand{\R}{\mathbb{R}}
\newcommand{\V}{\mathbb{V}}
\newcommand{\Z}{\mathbb{Z}}
\newcommand{\Wh}{\mathrm{Wh}}
\newcommand{\St}{\mathrm{St}}
\newcommand{\Lk}{\mathrm{Lk}}
\DeclareMathOperator{\cd}{cd}
\DeclareMathOperator{\diam}{diam}
\DeclareMathOperator{\Hull}{Hull}
\DeclareMathOperator{\Stab}{Stab}
\newcommand{\univ}[1]{\widetilde{#1}}
\newcommand{\curlyH}{\mathcal{H}}
\newcommand{\into}{\hookrightarrow}
\title{Surface groups among cubulated hyperbolic and one-relator groups}
\author{Henry Wilton}
\newcommand{\Addresses}{{% additional braces for segregating \footnotesize
  \bigskip
  \footnotesize

  \textsc{DPMMS, Centre for Mathematical Sciences, Wilberforce Road, Cambridge, CB3 0WB, UK}\par\nopagebreak
  \textit{E-mail address:} \texttt{h.wilton@maths.cam.ac.uk}

}}
\begin{document}

\maketitle

\begin{abstract}
Let $X$ be a non-positively curved cube complex with hyperbolic fundamental group. We prove that $\pi_1(X)$ has a non-free subgroup of infinite index unless $\pi_1(X)$ is either free or a surface group, answering questions of Gromov and Whyte (in a special case) and Wise.  A similar result for one-relator groups follows, answering a question posed by several authors.  The proof relies on a careful analysis of free and cyclic splittings of cubulated groups.
\end{abstract}

Surface groups -- the fundamental groups of closed, aspherical surfaces -- and free groups are often seen as close cousins. One way this can be made precise is to look at their subgroup structures. The Nielsen--Schreier theorem asserts that every subgroup of a free group is free, while Johansson also proved that every subgroup \emph{of infinite index} in a surface group is free \cite{johansson_topologische_1931}.  It is natural to wonder whether this property characterises free and surface groups. Conjectures and questions along these lines have been stated under various hypotheses \cite{baumslag_one-relator_2019,baumslag_open_2002,bestvina_questions_????,gardam_surface_2023,khukhro_unsolved_2023,wise_icm_2014}.

To the best of the author's knowledge, the following very general question is open.

\begin{question}\label{qu: General subgroup question}
Let $G$ be an infinite, finitely presented group, such that every subgroup of infinite index is free. Must $G$ be isomorphic to either a free group or a surface group?
\end{question}

A finitely generated, but infinitely presented, group with every proper subgroup isomorphic to $\Z$ was constructed by Ol'shanskii \cite{olshanskii_infinite_1979}. Clearly, a group $G$ with a surface subgroup can only have every subgroup of infinite index free if $G$ itself is virtually a surface group, so Question \ref{qu: General subgroup question} can be thought of as a useful precursor to the famous problem of finding surface subgroups.

We provide positive answers to Question \ref{qu: General subgroup question} for two important classes of groups. Recall that a group $G$ is \emph{cubulated} if $G=\pi_1(X)$, where $X$ is a compact, non-positively curved cube complex. Cubulated groups have been a major theme in geometric group theory for several decades, culminating in the work of Agol and Wise on the virtual Haken conjecture \cite{agol_virtual_2013,wise_structure_2021}. Our main theorem gives a strong positive answer to Question \ref{qu: General subgroup question} for cubulated hyperbolic groups.

\begin{letterthm}\label{thm: Main theorem for cubulated hyperbolic groups}
Let $G$ be a cubulated hyperbolic group. Unless $G$ is free or a surface group, $G$ has a one-ended, quasiconvex subgroup of infinite index.
\end{letterthm}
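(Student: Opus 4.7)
The plan is to combine Stallings' theorem, Bowditch's canonical JSJ decomposition, and the codimension-one structure provided by the cubulation, and to induct on the dimension of the cube complex.

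I would first reduce to the case where $G$ is one-ended. Since $G$ acts freely on a contractible NPC cube complex, it is torsion-free; hence every splitting yielded by Stallings' theorem is a free splitting, and by Dunwoody accessibility $G$ decomposes as a finite free product of freely indecomposable factors. If every factor is infinite cyclic, then $G$ is free and we are done. Otherwise a non-cyclic freely indecomposable factor $H$ is one-ended, quasiconvex as a free factor, and has infinite index unless the decomposition is trivial---in which case $G$ itself is one-ended, and we proceed.

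Now assume $G$ is one-ended and not a surface group, and apply Bowditch's canonical JSJ decomposition over virtually cyclic subgroups. Its vertex groups are quasiconvex and fall into three types: virtually cyclic, maximal hanging Fuchsian (MHF), and rigid. If every non-elementary vertex were MHF, then $G$ would be a torsion-free virtually surface group, hence itself a surface group, contrary to hypothesis. So the JSJ has a rigid vertex group $V$. If the JSJ is non-trivial, then $V$ is a proper quasiconvex one-ended subgroup of $G$ of infinite index (vertex groups of non-trivial splittings of a one-ended hyperbolic group have infinite index), and we are done. The remaining case is that the JSJ is trivial, so that $G$ itself is rigid, i.e.\ admits no splitting over a virtually cyclic subgroup.

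For this rigid case I would use the cubulation. Each hyperplane of the universal cover has a codimension-one quasiconvex stabilizer, and such a subgroup, were it virtually cyclic, would produce a splitting of $G$ over a virtually cyclic subgroup (via the theory of codimension-one subgroups and splittings of hyperbolic groups developed by Bowditch, Dunwoody--Swenson, and others)---forbidden by rigidity. Hence every hyperplane stabilizer is either trivial, one-ended, or free of rank at least $2$. If some stabilizer is one-ended, it is the desired one-ended quasiconvex subgroup of infinite index. Otherwise each non-trivial stabilizer acts on its hyperplane---an NPC cube complex of strictly lower dimension---so is itself a cubulated hyperbolic group to which the inductive hypothesis applies; by transitivity of quasiconvexity in hyperbolic groups, a one-ended QC subgroup of infinite index in any hyperplane stabilizer is also one in $G$. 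The main obstacle is closing this recursion: one must rule out the scenario in which every iterated hyperplane stabilizer is free and no one-ended subgroup is ever produced. I expect this to be precisely the ``careful analysis of free and cyclic splittings of cubulated groups'' advertised in the abstract, and that the required structural statement is that a one-ended, rigid, cubulated hyperbolic group all of whose hyperplane stabilizers are free must itself be a surface group---yielding the sought contradiction and completing the proof.
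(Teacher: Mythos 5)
Your reduction to the one-ended case is fine and agrees with the paper. Your treatment of the Bowditch JSJ, however, contains two unjustified claims. First, ``if every non-elementary vertex is MHF then $G$ is a surface group'' is false: a graph of hanging Fuchsian (free) vertex groups glued along cyclic edge groups can be one-ended and hyperbolic without being a surface group. This is precisely the class of graphs of free groups with cyclic edge groups, and it is handled in the paper by appealing to the author's earlier work \cite{wilton_one-ended_2011} rather than by a dichotomy of the sort you assert. Second, a rigid vertex group of a non-trivial JSJ need not be one-ended: ``rigid'' only means it admits no cyclic splitting compatible with the incident edge groups, and such a vertex can perfectly well be a free group. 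The paper's actual dichotomy at this step is: if some vertex group is not free, Grushko provides a one-ended quasiconvex free factor; if all vertex groups are free, invoke \cite{wilton_one-ended_2011}.

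For the cyclically indecomposable (trivial JSJ) case -- which is the technical heart -- your recursion on hyperplane stabilizers and dimension does not close, as you note yourself: the inductive hypothesis says nothing when a hyperplane stabilizer happens to be free, so the recursion can terminate without ever producing a one-ended subgroup, and the structural statement you conjecture (``no one-ended, rigid, cubulated hyperbolic group has all hyperplane stabilizers free'') is neither proved by you nor by the paper. The paper's route is entirely different and needs no induction on dimension: it develops a theory of Whitehead complexes $\Wh_X(Y)$ for convex subcomplexes $Y\subseteq\widetilde{X}$, shows that one can arrange $X$ to have no $0$- or $1$-cuts (via Shepherd's theorem), proves a uniform bound $C$ on the widths of $k$-cuts when $\pi_1(X)$ is cyclically indecomposable (a pigeonhole argument for $2$-cuts upgraded to all $k$ by Bestvina--Bromberg--Fujiwara projection complexes), and then uses Agol's separability of hyperplane stabilizers to pass to a finite cover in which the translates of a chosen hyperplane are pairwise at distance greater than $C$. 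Cutting that cover along the hyperplane yields a quasiconvex subgroup of infinite index, and a hypothetical free splitting of it would produce a $k$-cut in $X$ of width exceeding $C$ -- a contradiction. That final mechanism, not the one you propose, is the ``careful analysis of free and cyclic splittings'' advertised in the abstract.
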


\noindent This answers questions of Gromov and Whyte in the cubulated case \cite[Questions 1.7 and 1.11]{bestvina_questions_????}. In particular, Theorem \ref{thm: Main theorem for cubulated hyperbolic groups} applies to $C'(1/6)$ small-cancellation groups \cite{wise_cubulating_2004}.

The proof makes essential use of the cubulation hypothesis, and removing it seems to be well beyond current technology. The hyperbolicity hypothesis, on the other hand, can be relaxed to the assumption that $G$ is \emph{virtually special} in the sense of Haglund--Wise \cite{haglund_special_2008}.\footnote{Agol's theorem asserts that cubulated hyperbolic groups are virtually special, so this is indeed a relaxation \cite{agol_virtual_2013}.}

\begin{lettercor}\label{cor: Corollary for virtually special groups}
Let $G$ be the fundamental group of a compact, virtually special cube complex. Unless $G$ is free or a surface group, $G$ has a one-ended, cubically convex-cocompact subgroup of infinite index.
\end{lettercor}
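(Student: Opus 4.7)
The plan is to split into two cases depending on whether $G$ is hyperbolic, reducing to Theorem \ref{thm: Main theorem for cubulated hyperbolic groups} in one case and producing a cubical flat in the other. As a preliminary observation, $G$ acts freely on the CAT(0) cube complex $\univ X$ and so is torsion-free; I would also fix a finite cover $X' \to X$ with $X'$ special, giving a torsion-free finite-index subgroup $G' := \pi_1(X') \leq G$.

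In the hyperbolic case, $G'$ is itself cubulated hyperbolic via $X'$, so I would apply Theorem \ref{thm: Main theorem for cubulated hyperbolic groups} to $G'$. If $G'$ is free then $G$ is torsion-free and virtually free, so $G$ is free by Stallings' theorem; if $G'$ is a closed surface group then $G$ is a torsion-free virtual $PD(2)$-group, hence itself a closed surface group by the classification of $PD(2)$-groups. Otherwise the theorem produces a one-ended quasiconvex subgroup $H \leq G'$ of infinite index, and I would use the Haglund--Wise theory of combinatorial convex hulls in special cube complexes to arrange, possibly after passing to a commensurable subgroup, for $H$ to act cocompactly on a cubically convex subcomplex of $\univ{X'} = \univ X$. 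Since $[G:G'] < \infty$, such an $H$ still has infinite index in $G$.

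In the non-hyperbolic case, I would use that a cubulated non-hyperbolic group contains a rank-$2$ free abelian subgroup $A \leq G$ acting cocompactly on a cubically convex combinatorial flat in $\univ X$; this follows from the Flat Plane theorem combined with the Caprace--Sageev rank-rigidity machinery for cube complexes, together with a Sageev-style cubical convex-hull construction to promote the CAT(0) flat to a cubical one. Then $A$ is cubically convex-cocompact and one-ended. If $A$ has finite index in $G$, then $G$ is a torsion-free virtually-$\Z^2$ group, hence the fundamental group of a torus or Klein bottle, and so a closed surface group; otherwise $A$ has infinite index and is the desired subgroup.

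The main obstacle will be in the hyperbolic case: converting the abstract quasiconvex subgroup produced by Theorem \ref{thm: Main theorem for cubulated hyperbolic groups} into a cubically convex-cocompact one. Quasiconvex subgroups of a general cubulated hyperbolic group need not act cocompactly on any cubically convex subcomplex, and the virtual specialness hypothesis is exactly what allows Haglund's combinatorial convex-hull construction to go through. A secondary technical point is checking that the free and surface-group exceptions cannot re-emerge when passing between $G'$ and $G$, which is handled by torsion-freeness combined with the $PD(2)$ classification.
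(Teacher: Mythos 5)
Your overall strategy---splitting on whether $G$ is hyperbolic---is the same as the paper's, and your argument is correct in outline, but both branches are considerably more involved than they need to be. In the hyperbolic case, there is no need to pass to a special finite cover $G'$ and then deal with the free / $PD(2)$ exceptions separately: Theorem \ref{thm: Main theorem for cubulated hyperbolic groups} already applies directly to $G$ (which is cubulated hyperbolic via $X$), and if it outputs ``free'' or ``surface group'' then the corollary's conclusion holds vacuously. More importantly, the worry you flag at the end---that a quasiconvex subgroup need not be cubically convex-cocompact without specialness---is unfounded. For a cubulated hyperbolic group, quasiconvexity in the word metric gives $\ell^2$-quasiconvexity of an orbit in $\univ{X}$ by \v{S}varc--Milnor, hence $\ell^1$-quasiconvexity since the two metrics are bi-Lipschitz on a finite-dimensional complex, and then Corollary \ref{cor: CCC vs l1 quasiconvex} (a consequence of Haglund's Theorem \ref{thm: Haglund's theorem}) immediately gives cubical convex-cocompactness; virtual specialness plays no role here. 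In the non-hyperbolic case, rather than reassembling the result from the Flat Plane theorem, rank rigidity and a hull construction, the paper simply cites Caprace--Haglund \cite[Corollary 4]{caprace_geometric_2009}, which says directly that $G$ contains a cubically convex-cocompact $\Z^2$; your final Bieberbach/Klein-bottle observation in the finite-index case then matches the paper. So: same route, but two places where you are re-deriving or over-engineering things that are already packaged up as quotable statements.
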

\begin{proof}
If $G$ is hyperbolic then the result follows from Theorem \ref{thm: Main theorem for cubulated hyperbolic groups}. Otherwise, Caprace--Haglund proved that $G$ contains a cubically convex-cocompact subgroup isomorphic to $\Z^2$ \cite[Corollary 4]{caprace_geometric_2009}, and the result follows unless $\Z^2$ has finite index. But, in the latter case, $G$ is a surface group by Bieberbach's theorem.
\end{proof}

\noindent This corollary answers a question asked by Wise in the 2014 ICM proceedings \cite[Question 13.50]{wise_icm_2014}.

\begin{remark}\label{rem: Hyperbolisation theorems}
The proof of Corollary \ref{cor: Corollary for virtually special groups}  will apply to any class of cubulated groups in which one has a `hyperbolisation' theorem, guaranteeing that a cubulated group $G$ without a $\mathbb{Z}^2$ subgroup must be hyperbolic. Such hyperbolisation theorems are also known for cube complexes without cyclic facing triples \cite{sageev_periodic_2011} and for weakly special cube complexes \cite{nakagawa_non-hyperbolic_2014}.
\end{remark}

Mapping tori of free-group automorphisms are another appealing class of groups to which Theorem \ref{thm: Main theorem for cubulated hyperbolic groups} applies.

\begin{lettercor}\label{cor: Corollary for free-by-cyclic groups}
Let $F$ be a finitely generated free group of rank at least 2.  Any semidirect product $G=F\rtimes \mathbb{Z}$ has a finitely generated, one-ended subgroup.
\end{lettercor}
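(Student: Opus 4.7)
My approach splits on whether $G$ is hyperbolic. Writing $G = F \rtimes_\phi \Z$ with $\phi \in \mathrm{Aut}(F)$ and $t$ a generator of the $\Z$ factor, Brinkmann's theorem characterises hyperbolicity of $G$ by the condition that $\phi$ is atoroidal, i.e.\ has no periodic conjugacy classes of nontrivial elements. The two cases are then handled by quite different techniques.

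If $\phi$ is not atoroidal, I would directly exhibit a $\Z^2$ subgroup. After replacing $\phi$ by a suitable representative of its outer automorphism class (which leaves the isomorphism type of $G$ unchanged), there exist $n \geq 1$ and $g \in F \setminus \{1\}$ with $\phi^n(g) = g$, and then $\langle g, t^n \rangle \cong \Z^2$ is the required finitely generated, one-ended subgroup.

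If $\phi$ is atoroidal, then $G$ is hyperbolic, and the plan is to invoke the theorem of Hagen--Wise that hyperbolic free-by-cyclic groups are virtually compact special, hence in particular cubulated, and then apply Theorem \ref{thm: Main theorem for cubulated hyperbolic groups}. The two exceptional conclusions of that theorem are easy to exclude here: $G$ cannot be free because $F$ is a nontrivial finitely generated normal subgroup of infinite index, which a free group does not admit; and $G$ cannot be the fundamental group of a closed aspherical surface because, by Stallings' fibration theorem, the only closed aspherical surfaces fibring over $S^1$ with finitely generated fibre are the torus and the Klein bottle, whose fundamental groups are manifestly not of the form $F \rtimes \Z$ for $F$ free of rank at least $2$. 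Theorem \ref{thm: Main theorem for cubulated hyperbolic groups} then delivers a one-ended (quasiconvex) subgroup of infinite index, which is in particular finitely generated.

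The main obstacle is the cubulation step in the atoroidal case: this is where genuinely deep input (the Hagen--Wise theorem) is required in order to reduce to Theorem \ref{thm: Main theorem for cubulated hyperbolic groups}. Once that ingredient is available, the remaining arguments are essentially bookkeeping about free-by-cyclic groups.
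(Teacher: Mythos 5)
Your proposal follows essentially the same route as the paper: split on hyperbolicity via Brinkmann, exhibit $\Z^2$ in the non-atoroidal case, and invoke Hagen--Wise plus Theorem \ref{thm: Main theorem for cubulated hyperbolic groups} in the hyperbolic case; the extra details you supply for ruling out the free and surface cases are correct (the paper leaves them implicit). One small imprecision: in the non-atoroidal case you cannot in general arrange $\phi^n(g)=g$ on the nose by changing the representative of $[\phi]\in\Out(F)$, since $\phi^n(g)=hgh^{-1}$ only gives a twisted fixed point; but the repair is immediate, as $g$ and $h^{-1}t^n$ already commute in $G$ and generate a $\Z^2$, which is what Brinkmann's theorem gives directly.
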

\begin{proof}
Brinkmann proved that $G$ is hyperbolic unless $G$ contains a subgroup isomorphic to $\Z^2$ \cite{brinkmann_hyperbolic_2000}. Since $F$ has rank at least 2, such a subgroup must have infinite index, so the result follows unless $G$ is hyperbolic. In this case, $G$ is cubulated by the work of Hagen--Wise \cite{hagen_cubulating_2015,hagen_cubulating_2016}. Since $G$ is neither free nor a surface group, the result again follows, this time by Theorem \ref{thm: Main theorem for cubulated hyperbolic groups}.
\end{proof}

Historically, Question \ref{qu: General subgroup question} has received the most attention when $G$ is a one-relator group. In this context, the question was attributed to Fine--Rosenberger--Wienke in the Kourovka notebook \cite[Question 20.22]{khukhro_unsolved_2023}, but it has also received attention in many other works on one-relator groups, sometimes with additional hypotheses \cite{baumslag_one-relator_2019,baumslag_open_2002,ciobanu_surface_2013,fine_surface_2007}. In this context, it is sometimes referred to as the \emph{surface group conjecture}. In \cite{wilton_one-ended_2011}, the author proved the surface group conjecture for Sela's limit groups, and also for graphs of free groups with cyclic edge groups.\footnote{The latter class includes the classes of \emph{cyclically pinched} and \emph{conjugacy pinched} one-relator groups considered in \cite{ciobanu_surface_2013,fine_surface_2007}.}

Most recently, the surface group conjecture was addressed by Gardam--Kielak--Logan \cite[Conjecture 1.4]{gardam_surface_2023}, who proved it in the case when the one-relator group $G$ has two generators. Combining their work with Theorem \ref{thm: Main theorem for cubulated hyperbolic groups} and other recent advances in our understanding of one-relator groups, the surface group conjecture follows.

\begin{letterthm}\label{thm: Main theorem for one-relator groups}
Let $G$ be an infinite one-relator group. Unless $G$ is free or a surface group, $G$ has a subgroup of infinite index that is not free. 
\end{letterthm}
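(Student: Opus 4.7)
The plan is to reduce, in all cases, to situations where either Theorem~\ref{thm: Main theorem for cubulated hyperbolic groups} or the theorem of Gardam--Kielak--Logan applies, by exploiting classical and very recent structural results on one-relator groups.

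First I would dispose of the torsion case. If $G$ is a one-relator group with torsion, then B.~B.~Newman's spelling theorem shows that $G$ is hyperbolic, and Wise's theorem shows that such a $G$ is virtually special, hence cubulated. Since $G$ has torsion it is neither free nor the fundamental group of a closed aspherical surface, so Theorem~\ref{thm: Main theorem for cubulated hyperbolic groups} furnishes a one-ended, quasiconvex subgroup of infinite index, which is automatically non-free.

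Assume now that $G$ is torsion-free. If $G$ admits a $2$-generator one-relator presentation, the conclusion is exactly the theorem of Gardam--Kielak--Logan cited in the introduction. Otherwise $G$ has a one-relator presentation with at least three generators, so $b_1(G)\geq 2$. If $G$ is hyperbolic, Linton's recent cubulation theorem for hyperbolic one-relator groups gives that $G$ is cubulated, and Theorem~\ref{thm: Main theorem for cubulated hyperbolic groups} applies directly. If instead $G$ is not hyperbolic, then by the Tits-type dichotomy for torsion-free one-relator groups (Louder--Wilton and Linton), $G$ contains a Baumslag--Solitar subgroup $B$. Such a $B$ is not free, so it remains only to verify that $B$ may be chosen with infinite index. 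This is a short commensurability check: using $b_1(G)\geq 2$ together with Bieberbach's theorem rules out $G$ being virtually $\Z^2$, and deficiency considerations exclude the other Baumslag--Solitar possibilities (or force an infinite-index $\Z^2$ inside $B$).

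The principal obstacle is that the whole argument rests on two deep and very recent ingredients: Linton's cubulation of hyperbolic one-relator groups, and the Baumslag--Solitar dichotomy for torsion-free one-relator groups. Without them there is no visible route from an arbitrary one-relator group into the cubulated hyperbolic setting where Theorem~\ref{thm: Main theorem for cubulated hyperbolic groups} operates; once these inputs and the two-generator case of Gardam--Kielak--Logan are granted, the remaining work is essentially bookkeeping with commensurability and deficiency.
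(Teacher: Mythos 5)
Your high-level strategy -- reduce to Theorem~\ref{thm: Main theorem for cubulated hyperbolic groups} or to Gardam--Kielak--Logan via structure theory of one-relator groups -- is the same as the paper's, and the two-generator and torsion branches are handled correctly (the torsion case can in fact be dispatched more cheaply: an infinite group with torsion has a finite non-trivial, hence non-free, subgroup of infinite index, so there is no need to invoke Newman and Wise). However, the two remaining cases rest on inputs that are stronger than what the paper actually has available, and this is where the argument does not close.

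In your case (c) you cite ``Linton's recent cubulation theorem for hyperbolic one-relator groups.'' The result of Linton used in the paper (Theorem~\ref{thm: Structure of one-relator groups}(iv)) is that one-relator groups with $\pi(w)>2$ are hyperbolic and cubulated; it is not asserted that \emph{every} hyperbolic one-relator group is cubulated. A torsion-free one-relator group on three or more generators can be hyperbolic while $\pi(w)=2$, and your argument says nothing about this case. In your case (d), the ``Tits-type dichotomy'' giving a Baumslag--Solitar subgroup in every non-hyperbolic torsion-free one-relator group is the Gersten conjecture; it is not proved in the Louder--Wilton or Linton references cited in the paper. What the Louder--Wilton theorem (Theorem~\ref{thm: Structure of one-relator groups}(iii)) actually provides when $\pi(w)=2$ is a two-generator, one-relator subgroup $P$ that is not free -- not a Baumslag--Solitar subgroup.

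The paper's proof sidesteps both issues by organizing the cases by primitivity rank rather than by hyperbolicity. When $\pi(w)=2$, it takes the subgroup $P$ from Louder--Wilton and observes that under the hypothesis that every infinite-index subgroup is free, $P$ must have finite index (otherwise $P$ itself is the required non-free subgroup of infinite index); then GKL applies to $P$ directly, and Bieberbach transfers the conclusion to $G$. Your Euler-characteristic/deficiency observation is exactly the right idea, but it should be applied to $P$ rather than to a hypothetical Baumslag--Solitar subgroup: if $P\leq G$ has finite index then $\chi(P)=[G:P]\chi(G)$, but $\chi(P)=0$ while $\chi(G)=2-k<0$ when $k\geq 3$, a contradiction. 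Replacing the Gersten-conjecture input with the Louder--Wilton subgroup $P$ in this way closes the gap and recovers essentially the argument in the paper.
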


The alert reader will notice that the conclusion in the case of one-relator groups is slightly weaker than the conclusions for cubulated hyperbolic or free-by-cyclic groups. While Theorem \ref{thm: Main theorem for cubulated hyperbolic groups} produces a finitely generated (indeed, quasiconvex) subgroup that is not free, the non-free subgroup produced by Theorem \ref{thm: Main theorem for one-relator groups} may be infinitely generated.

\begin{remark}\label{rem: 2-generator one-relator groups}
In fact, the proof of Theorem \ref{thm: Main theorem for one-relator groups} produces a finitely generated non-free subgroup unless $G$ has two generators. In the two-generator case, the stronger result is not true: every finitely generated subgroup of infinite index in the $(1,2)$-Baumslag--Solitar group $BS(1,2)$ is either trivial or isomorphic to $\Z$, but $BS(1,2)$ contains a subgroup isomorphic to the infinitely generated non-free group $\Q_2$ of dyadic rationals. (Cf.\ Question \ref{qu: Refined one-relator statement} below.)
\end{remark}

The proof of Theorem \ref{thm: Main theorem for cubulated hyperbolic groups} is inspired by the author's proof of the corresponding statement for graphs of free groups with cyclic edge groups \cite{wilton_one-ended_2011}. That proof relied on Whitehead theory to recognise relative free splittings of free groups, so our first task is to generalise that machinery to the context of non-positively curved cube complexes.

Let $X$ be a compact, non-positively curved cube complex. We consider compact, convex subcomplexes $Y$ of the universal cover $\univ{X}$, and define a \emph{Whitehead complex} $\Wh_X(Y)$. This is a simplicial complex that encodes the intersections of the bounding hyperplanes of $Y$, and provides a topological model for the complement $\univ{X}\smallsetminus Y$. We will be especially interested in the minimal cardinality $k$ of a cut set of vertices of $\Wh_X(Y)$: if there is such a cut set, then $Y$ is called a \emph{$k$-cut}. An essential cube complex $X$ has a 0-cut if and only if $\pi_1(X)$ has more than one end.

We analyse the behaviour of the Whitehead complex when $Y$ is cut along a hyperplane. It turns out that $\Wh_X(Y)$ decomposes as a connect sum in the natural sense for simplicial complexes, in a way that generalises the work of Manning \cite{manning_virtually_2010} and Cashen--Macura \cite{cashen_line_2011} in the classical context of Whitehead graphs. Our key technical results analyse how cut sets of vertices behave under the connect sum operation.  This enables us to relate cuts to links of vertices. The following criterion, a cubical analogue of Whitehead's cut-vertex lemma, is a sample application.

\begin{letterthm}\label{thm: Generalised Whitehead's lemma}
Let $X$ be a compact, non-positively curved cube complex. If the link of every vertex is connected and has no cut simplices then $\pi_1(X)$ has at most one end.
\end{letterthm}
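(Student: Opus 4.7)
My plan is to establish, by induction on the size of $Y$ (say, on the number of cubes it contains), the stronger claim that for every compact, convex subcomplex $Y$ of $\univ{X}$, the Whitehead complex $\Wh_X(Y)$ is connected and has no cut simplex. Granting this claim, no such $Y$ is a $0$-cut; after reducing to the case that $X$ is essential in the usual way (collapsing inessential hyperplanes does not change $\pi_1$ or the hypothesis on links), the criterion stated in the introduction -- that an essential cube complex has a $0$-cut if and only if $\pi_1(X)$ has more than one end -- forces $\pi_1(X)$ to have at most one end.

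For the base case I would take $Y=\{v\}$, a single vertex of $\univ{X}$. The bounding hyperplanes of $Y$ are in bijection with the edges of $\univ{X}$ incident at $v$, and collections of such hyperplanes meet near $v$ exactly when the corresponding edges span a cube at $v$. This identifies $\Wh_X(v)$ canonically with the link of $v$ in $\univ{X}$, which in turn equals the link of its image vertex in $X$. By hypothesis this link is connected and has no cut simplex, so the base case is exactly what the theorem assumes.

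For the inductive step, I would pick a hyperplane $H$ of $\univ{X}$ that crosses $Y$ non-trivially and use it to split $Y$ as $Y=Y_1\cup Y_2$, where $Y_1,Y_2$ are compact convex subcomplexes meeting in (the relevant portion of) the carrier of $H$, and each strictly smaller than $Y$. The connect sum decomposition described in the paper then gives $\Wh_X(Y)=\Wh_X(Y_1)\#_\sigma\Wh_X(Y_2)$, where $\sigma$ is a simplex determined by $H$. By the inductive hypothesis, $\Wh_X(Y_1)$ and $\Wh_X(Y_2)$ are both connected and have no cut simplex, and in particular the gluing simplex $\sigma$ is not a cut simplex of either side. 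The conclusion then amounts to the statement that a connect sum along a non-cut simplex of two connected simplicial complexes with no cut simplex is itself connected and has no cut simplex.

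The hardest step will be the propagation of the ``no cut simplex'' condition through the connect sum operation: connectedness alone is easy to preserve, but ruling out cut simplices of $\Wh_X(Y)$ requires a careful analysis of how a hypothetical cut simplex would have to sit with respect to $\sigma$ and descend to a cut simplex in one of the pieces $\Wh_X(Y_i)$. This is precisely what the ``key technical results analyse how cut sets of vertices behave under the connect sum operation'' are designed to do, and I would invoke them here. Once that propagation lemma is in hand, the induction closes and the theorem follows.
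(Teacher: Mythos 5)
Your proposal is correct and follows the paper's proof essentially verbatim: induction on the size of $Y$, base case identifying $\Wh_X(v)$ with the vertex link, and an inductive step cutting along a hyperplane and propagating the ``connected without cut simplices'' condition through the connect sum via Lemmas \ref{lem: Disconnected splicing} and \ref{lem: Cut simplices from splicing}. Two minor clarifications: the connect sum in Proposition \ref{prop: Constructing Whitehead complexes by splicing} is taken at the \emph{vertex} $H$ of the Whitehead complex (the cutting hyperplane is itself a vertex of both $\Wh_X(Y_1)$ and $\Wh_X(Y_2)$), so it is $\#_H$ rather than $\#_\sigma$ for some higher-dimensional simplex; and the reduction to the essential case is unnecessary, since the conclusion only invokes the direction of Proposition \ref{prop: Detecting free splittings using Whitehead graphs} that produces a $0$-cut from a second end, which does not require the freely-essential hypothesis.
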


While 0-cuts detect free splittings, 1-cuts turn out to obstruct analysing splittings over non-trivial subgroups.  Fortunately, using recent work of Shepherd \cite{shepherd_semistability_2023}, as long as $\pi_1(X)$ is one-ended we may assume that $X$ has no 1-cuts.

Assuming $\pi_1(X)$ is one-ended, our next task is to analyse splittings over $\Z$, and to do this we restrict to the case when $\pi_1(X)$ is hyperbolic.  The approach generalises work of Cashen--Macura \cite{cashen_line_2011} and Cashen \cite{cashen_splitting_2016}, who showed that a line pattern in a free group does not split over $\Z$ if and only if there is a bound on the diameters of cut pairs in the Whitehead graph.  We prove an analogous result, Theorem \ref{thm: Slice length bound}, which asserts that $\pi_1(X)$ is cyclically indecomposable if and only if there is a uniform bound on the widths of 2-cuts. The point is that cyclic splittings correspond to \emph{periodic} 2-cuts, from which a pigeonhole argument implies a bound on the width of a 2-cut if $\pi_1(X)$ is cyclically indecomposable.  Using the projection-complex machinery of Bestvina--Bromberg--Fujiwara \cite{bestvina_constructing_2015}, and their improved version with Sisto \cite{bestvina_acylindrical_2019}, we strengthen this to a uniform bound on the widths of $k$-cuts, independent of $k$.

With these technical results in hand, we are ready to prove Theorem \ref{thm: Main theorem for cubulated hyperbolic groups}. Using the Grushko and JSJ decompositions, and since the case of graphs of free groups with cyclic edge groups is already known, we may reduce to the cyclically indecomposable case. By the above analysis, we may assume that $X$ has no 0- or 1-cuts, and that there is a uniform bound $C$ on the widths of $k$-cuts.  Agol's theorem \cite{agol_virtual_2013} implies that hyperplane stabilisers are separable, so we may pass to a finite-sheeted cover $X_0\to X$ with the property that, for some hyperplane $H$ of $\univ{X}$, every $\pi_1(X_0)$-translate of $H$ is at distance $>C$ from $H$. Cutting $X_0$ along $H$ and passing to a connected component $X'$ gives a quasiconvex subgroup that is necessarily of infinite index. But now a $0$-cut in $X'$ would lead to a $k$-cut in $X$ of width greater than $C$, which is a contradiction, so $\pi_1(X')$ is not free, as required.

The paper is structured as follows. Section \ref{sec: Preliminaries} contains preliminaries on non-positively curved complexes, cube complexes and simplicial complexes. Experts may wish to jump straight to Section \ref{sec: Free splittings of cube complexes}, where we define Whitehead complexes and use them to analyse free splittings, including proving Theorem \ref{thm: Generalised Whitehead's lemma}. In Section \ref{sec: Cyclic splittings} we extend the analysis to cyclic splittings in the hyperbolic case. Section \ref{sec: One-ended subgroups} contains the proofs of the main theorems. Section \ref{sec: Strebel} records a counterexample to a natural generalisation of Theorem \ref{thm: Main theorem for cubulated hyperbolic groups}, while some open questions are listed in Section \ref{sec: Questions}.

\subsection*{Acknowledgements}

I am grateful to Mark Hagen for useful conversations about this work, most especially for suggesting Lemma \ref{lem: Quasitree lemma}, which greatly simplifies the argument. Thanks also to Macarena Arenas, Jack Button, Chris Cashen, Mark Hagen again, Marco Linton, Lars Louder and Sam Shepherd for comments on a preliminary draft. The second draft benefitted from comments from Martin Bridson, Anthony Genevois and Chris Hruska. Special thanks to Macarena Arenas (again), Misha Kapovich and Genevieve Walsh for help with \S\ref{sec: Strebel}. Finally, thanks to the referees for suggesting many improvements.

\section{Preliminaries}\label{sec: Preliminaries}

Two kinds of cell complexes will be of particular importance in this paper. On the one hand, our main objects of study are non-positively curved cube complexes and their universal covers, which are CAT(0). On the other hand, we will study them by constructing certain simplicial complexes, called \emph{Whitehead complexes}. In this section, we recall some basic facts about all these classes of complexes. The reader is referred to the standard reference \cite{bridson_metric_1999} for background material on cube complexes and other metric polyhedral complexes, and for details of much of what follows. 

All complexes that we consider will be locally finite.

\subsection{Non-positively curved cube complxes}\label{sec: NPC cube complexes}

A \emph{cube complex} $X$ is a cell complex constructed from a disjoint union of finite-dimensional unit cubes by identifying faces isometrically \cite[Example I.7.40(4)]{bridson_metric_1999}. Further, we will also assume that cubes are embedded, and that the intersection of any two cubes is either empty or a face (i.e.\ $X$ is \emph{cubical} in the sense of Bridson--Haefliger \cite[Definition I.7.32]{bridson_metric_1999}); this can always be ensured by subdividing.

The \emph{link} $\Lk_X(x)$ of a point $x$ in a metric polyhedral complex $X$ is the space of directions at $x$, i.e.\ the space of germs of geodesics emanating from $x$ \cite[I.7.38]{bridson_metric_1999}, and admits a natural angular metric. When $X$ is a cube complex and $x$ is a vertex, $\Lk_X(x)$ has the natural structure of a simplicial complex, where the simplices are the corners of the cubes incident at $x$. 

As long as a cube complex $X$ is finite-dimensional, $X$ admits a natural complete geodesic metric in which every $k$-cube is locally isometric to $[0,1]^k\subseteq \R^k$. We will refer to this metric as the \emph{$\ell^2$-metric} on $X$. Gromov's link condition asserts that a polyhedral complex is non-positively curved -- i.e.\ the metric is locally CAT(0) -- if and only if the angular metric on the link of any point is CAT(1) \cite[4.2.A]{gromov_hyperbolic_1987}. Gromov further proved that, when $X$ is a cube complex, the link condition is satisfied if and only if the link of every vertex is \emph{flag}, meaning that, for every $k\geq 2$, every $k$-clique in the 1-skeleton of $X$ is the 1-skeleton.

The features of a non-positively curved cube complex $X$ can often be captured by studying its \emph{hyperplanes}. Let $C$ be a $k$-dimensional cube, identified with $[-1/2,1/2]^k\subseteq\R^k$. A \emph{hypercube} of $C$ is an intersection of $C$ with any coordinate hyperplane of $\R^k$. The set of hypercubes of cubes in $X$ form a natural directed system under inclusion, and gluing along these inclusions gives a cube complex with a natural map to $X$. The path components of this cube complexes are the \emph{hyperplanes} of $X$.

It is immediate from the definition that the natural map from a hyperplane $H$ to $X$ is a local isometry. Therefore, if $X$ is non-positively curved then $H$ is too.

Since hyperplanes are constructed by gluing together hypercubes of $X$, each hyperplane $H\to X$ admits a natural \emph{(closed) cubical neighbourhood} $N(H)$, which is a bundle over $H$ with fibre a closed interval. We may write $N_X(H)$ when we want to emphasise that $H$ is a hyperplane of $X$. Likewise, $N(H)$ has a natural \emph{interior} $\mathring{N}(H)$, also a bundle over $H$ with fibre an open interval.  The hyperplane $H$ is said to be \emph{two-sided} when these bundles are trivial.

If the map $H\to X$ is injective we say that $H$ is \emph{embedded}. In this case, the interior $\mathring{N}(H)$ is also naturally embedded in $X$. The complement $X_0=X\smallsetminus \mathring{N}(H)$ is a locally convex subcomplex of $X$, and is said to be obtained by \emph{cutting $X$ along $H$}. In this case, we write $X=X_0\cup_H$. When $H$ is two-sided, this expresses $X$ as a \emph{graph of spaces} in the sense of Scott and Wall \cite{scott_topological_1979}. As long as $X$ itself is connected, the vertex space $X_0$ has at most two components, and if there are two then we write $X=X_1\cup_H X_2$ in the obvious way. 

Finally, we can make use of the following important lemma \cite[Proposition II.4.14]{bridson_metric_1999}.

\begin{lemma}\label{lem: Local isometries are pi1 injective}
Let $X,Y$ be complete, connected, geodesic, metric spaces. If $X$ is non-positively curved then any local isometry $Y\to X$ is injective on fundamental groups.
\end{lemma}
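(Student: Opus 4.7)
The plan is to reduce the claim to the assertion that any local isometry from a complete, connected, geodesic space into a CAT(0) space is globally injective, and then deduce $\pi_1$-injectivity by standard lifting considerations.

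First I would pass to universal covers. Since $X$ is complete, connected, geodesic and non-positively curved, Cartan--Hadamard gives that the universal cover $\univ{X}$, with the induced length metric, is a CAT(0) space. Likewise, $\univ{Y}$ inherits a complete, connected, geodesic length metric from $Y$, and both covering projections are local isometries. Choosing basepoints, lift $f$ to a map $\tilde{f}:\univ{Y}\to\univ{X}$; since $f$ and the covering projections are local isometries, so is $\tilde{f}$.

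Next I would show that $\tilde{f}$ is injective. Given distinct $a,b\in\univ{Y}$, pick a geodesic $\gamma$ from $a$ to $b$, which exists because $\univ{Y}$ is complete and geodesic. Local isometries send local geodesics to local geodesics of the same length, so $\tilde{f}\circ\gamma$ is a local geodesic in $\univ{X}$ of length $d_{\univ{Y}}(a,b)>0$. In a CAT(0) space every local geodesic is globally geodesic, and in particular is injective and has distinct endpoints, so
\[
d_{\univ{X}}\bigl(\tilde{f}(a),\tilde{f}(b)\bigr)=d_{\univ{Y}}(a,b)>0,
\]
yielding $\tilde{f}(a)\neq\tilde{f}(b)$.

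Finally I would deduce $\pi_1$-injectivity of $f$. Suppose $\alpha$ is a loop in $Y$ with $f_*[\alpha]=1$ in $\pi_1(X)$. Lift $\alpha$ to a path $\tilde{\alpha}$ in $\univ{Y}$ starting at the basepoint. By uniqueness of path lifting, $\tilde{f}\circ\tilde{\alpha}$ is the lift of $f\circ\alpha$, and since $f\circ\alpha$ is null-homotopic in $X$ this lift must be a loop in $\univ{X}$. Hence $\tilde{f}(\tilde{\alpha}(0))=\tilde{f}(\tilde{\alpha}(1))$, and injectivity of $\tilde{f}$ forces $\tilde{\alpha}(0)=\tilde{\alpha}(1)$, so $\tilde{\alpha}$ is a loop in $\univ{Y}$ and $\alpha$ is null-homotopic in $Y$. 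The geometric heart of the argument is the injectivity of $\tilde{f}$, which rests squarely on the global-from-local geodesic property in CAT(0) spaces; the surrounding steps are routine covering-space formalism and a standard application of Cartan--Hadamard.
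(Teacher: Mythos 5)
The paper does not prove this lemma itself; it cites \cite[Proposition~II.4.14]{bridson_metric_1999}, and your argument is essentially that reference's: pass to universal covers, lift $f$, show the lift is an isometric embedding using the fact that local geodesics in a CAT(0) space are global geodesics, and then deduce $\pi_1$-injectivity by covering-space theory. The structure and the geometric heart of your argument are correct.

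There is, however, a real gap in the sentence beginning ``Likewise, $\univ{Y}$ inherits a complete, connected, geodesic length metric from $Y$.'' For a general complete geodesic space $Y$, the universal cover equipped with the induced length metric is a complete length space, but it need not be geodesic, and the word ``Likewise'' misleadingly suggests an appeal to Cartan--Hadamard for $\univ{Y}$ even though you have not established that $Y$ is non-positively curved. This matters because your injectivity argument needs to choose a geodesic between $a$ and $b$ in $\univ{Y}$; without that, the length argument gives only $d(\tilde f(a),\tilde f(b))\le d(a,b)$, the wrong inequality. The fix is short and is exactly how Bridson--Haefliger proceed: observe first that $Y$ is itself non-positively curved, because each point of $Y$ has a neighbourhood mapped isometrically by $f$ onto an open set in the non-positively curved space $X$. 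Cartan--Hadamard then applies to $Y$ as well and shows that $\univ{Y}$ is CAT(0), hence uniquely geodesic; now your choice of $\gamma$ is justified, and the rest of the proof goes through unchanged. Once that observation is inserted, your proof is complete and matches the cited reference.
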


\noindent Thus, the inclusion of a hyperplane is $\pi_1$-injective, so cutting along $H$ also decomposes the fundamental group $G=\pi_1(X)$ as a graph of groups in the sense of Serre \cite{serre_trees_2003}, with a single edge group $\pi_1(H)$.

\subsection{CAT(0) cube complexes}\label{sec: CAT(0) cube complexes}

When $X$ is non-positively curved, the natural geodesic metric on the universal cover $\univ{X}$ is CAT(0), by the Cartan--Hadamard theorem \cite[\S4]{gromov_hyperbolic_1987}. This gives us access to the standard tools of CAT(0) geometry detailed in \cite{bridson_metric_1999}, which are especially useful when studying a closed, convex subspace $Y\subseteq \univ{X}$. For instance, we may define the orthogonal projection $\pi_Y:\univ{X}\to Y$ as in \cite[Proposition II.2.4]{bridson_metric_1999}.

One important consequence of the uniqueness of geodesics in CAT(0) spaces is that any local isometry between CAT(0) spaces is a global isometric embedding \cite[Proposition II.4.14]{bridson_metric_1999}. Thus, the map from a hyperplane  $H\to X$ lifts to a convex embedding of the universal cover $\univ{H}\into \univ{X}$, and so the hyperplanes of $\univ{X}$ are the universal covers of the hyperplanes of $X$.

The natural map $N(H)\to X$ is also a local isometry. Thus, the map $N(H)\to X$ also lifts to an embedding
\[
\widetilde{N(H)}=N(\univ{H})\into\univ{X}\,.
\]
The interval bundle $N(\univ{H})$ is necessarily trivial because $\univ{H}$ is simply connected, so $\univ{H}$ is embedded and two-sided, and cutting along $\univ{H}$ realises $\univ{X}$ as a graph of spaces. Because $\univ{X}$ is simply connected it follows that $\univ{H}$ separates it into two, and so we have 
\[
\univ{X}=\univ{U}_+\cup_{\univ{H}}\univ{U}_-
\]
for two convex subcomplexes $\univ{U}_\pm$. These subcomplexes are called the \emph{half-spaces} of $\univ{H}$.

It is usually important when studying CAT(0) cube complexes to rule out `uninteresting' half-spaces that are within a bounded distance of the corresponding hyperplane.

\begin{definition}\label{def: Essential cube complex}
A hyperplane $\univ{H}$ of a CAT(0) cube complex $\univ{X}$ is called \emph{inessential} if some half-space of $\univ{H}$ is within a bounded neighbourhood of $\univ{H}$. If $\univ{X}$ has no inessential hyperplanes then $\univ{X}$ itself is called \emph{essential}. We will call a non-positively curved complex $X$ essential or inessential according to its universal cover.
\end{definition}

Fortunately, we may always assume that $X$ is essential by invoking a result of Caprace--Sageev \cite[Proposition 3.5]{caprace_rank_2011}.

\begin{lemma}\label{lem: Essential complexes}
Every compact, non-positively curved cube complex $X$ deformation retracts to an essential, convex subcomplex of its first subdivision.
\end{lemma}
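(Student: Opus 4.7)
The plan is to iteratively remove inessential hyperplanes via deformation retractions, working in the first cubical subdivision so that the relevant retractions can be performed cellularly. First I would establish a convenient criterion for inessentiality in the compact quotient: by cocompactness of the $\pi_1(X)$-action on $\univ{X}$, if $\univ{H}$ is inessential with shallow half-space $\univ{U}$, there is a uniform bound $D$ on the combinatorial distance from points of $\univ{U}$ to $\univ{H}$. Passing to the subdivision, we may assume each such $\univ{U}$ is a cocompact subcomplex of the cubical $D$-neighbourhood of $\univ{H}$, meeting only finitely many $\pi_1(X)$-orbits of cubes beyond $N(\univ{H})$.

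The key step is the retraction itself. Working in the subdivision, I would push the shallow side $\univ{U}$ onto $\univ{H}$ along the interval fibres of $N(\univ{H})$. Because $\univ{U}$ lies in a bounded neighbourhood of $\univ{H}$ and the subdivision provides enough slack to realise each push as a cellular map, combining this push with the identity outside $\univ{U}$ yields a well-defined deformation retraction of $\univ{X}$ onto a smaller convex subcomplex. This retraction can be arranged to be $\pi_1(X)$-equivariant by processing the full orbit of $H$ simultaneously, so it descends to a cellular deformation retract of $X$ onto a convex subcomplex $X'$ in which the orbit of $H$ has been eliminated. Convexity in the universal cover is preserved because orthogonal projection onto a convex subcomplex of a CAT(0) cube complex maps convex subcomplexes to convex subcomplexes.

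An induction on the number of hyperplane orbits of $X$ then completes the proof, provided that no new inessential hyperplanes are introduced at any stage. This holds because the retraction only restricts the remaining hyperplanes to convex subcomplexes, and a hyperplane which was essential in $\univ{X}$ remains essential after intersecting with the new convex subcomplex (both of its half-spaces still contain points arbitrarily far from it, by cocompactness). The main obstacle is the case of overlapping shallow half-spaces: two inessential hyperplanes whose shallow sides interact require a coherent choice of order in which to peel them off. The first subdivision is essential here, as it supplies the room needed to realise each peeling as a cellular homotopy and ensures that the output is a genuine convex subcomplex of the subdivided $X$, rather than a space with a degenerate or non-cubical cell structure.
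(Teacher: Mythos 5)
The paper does not prove this lemma itself but cites Caprace--Sageev \cite[Proposition 3.5]{caprace_rank_2011}, so what follows assesses your reconstruction on its own terms. The high-level plan (retract away shallow half-spaces, using the subdivision to stay in the category of subcomplexes) is the right shape, but the technical steps that carry the weight are not supplied. Pushing $\univ{U}$ onto $\univ{H}$ ``along the interval fibres of $N(\univ{H})$'' defines a map only on $\univ{U}\cap N(\univ{H})$: a shallow half-space lies in a bounded neighbourhood of $\univ{H}$, but by no means inside the carrier $N(\univ{H})$, and the part of $\univ{U}$ beyond one cube's distance from $\univ{H}$ is untouched by your map. You would need either a different retraction (e.g.\ nearest-point projection of $\univ{X}$ onto the complementary deep half-space) or a layer-by-layer peeling, in which case the proposed induction on the number of hyperplane orbits is not well-founded as stated, since peeling a single layer need not remove an entire orbit.

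The deeper gap is the one you flag yourself: overlapping shallow half-spaces. You call it ``the main obstacle'' and then dispose of it by asserting that the first subdivision ``supplies the room needed''. It does no such thing; the subdivision merely ensures that the eventual intersection of deep half-spaces is a genuine subcomplex, since hyperplanes (midcubes) are subcomplexes of the subdivision but not of $\univ{X}$ itself. What actually has to be shown is that the intersection of all the deep half-spaces is non-empty, $\pi_1(X)$-invariant, cocompact, essential, and a deformation retract of $\univ{X}$. That is the substance of the Caprace--Sageev essential-core construction, and none of it follows from picking a ``coherent order'' of peeling. As written, the proposal names the hard part and then skips it.
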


Hyperplanes also make it possible to define a notion of convex hull for a subset of $\univ{X}$.

\begin{definition}\label{def: Hull}
For any subset $Y$ of a CAT(0) cube complex $\univ{X}$, the \emph{cubical convex hull} $\Hull(Y)$ is defined to be the intersection of the half-spaces of $\univ{X}$ that contain $Y$.
\end{definition}

It is important to be able to control the geometry of convex hulls, and we can do this using a fundamental theorem of Haglund.  This theorem makes use of the \emph{$\ell^1$-metric} on (the vertices of) $\univ{X}$.

\begin{definition}\label{def: l1 metric}
Let $\univ{X}$ be a CAT(0) cube complex. The $\ell^1$-metric on the 1-skeleton $X^{(1)}$ declares the distance between two points $x$ and $y$ to be equal to the length of the shortest path in $X^{(1)}$ between them.
\end{definition}

Note that the $\ell^1$-metric on $X^{(1)}$ is also geodesic, and that the $\ell^1$ distance between two vertices is exactly the number of hyperplanes that separate them \cite[Lemma 3.8]{sageev_ends_1995}.

The following lemma proves that orthogonal projection from a convex subcomplex to a hyperplane must land in their intersection, if it is non-empty.

\begin{lemma}\label{lem: Projecting subcomplexes onto hyperplanes}
Let $\univ{X}$ be a CAT(0) cube complex, let $Y\subseteq \univ{X}$ be a convex subcomplex, and let $H$ be a hyperplane of $\univ{X}$. If $H\cap Y$ is non-empty then $\pi_H(y)\in Y$, for all $y\in Y$.
\end{lemma}
\begin{proof}
Let $y\in Y$ and $x=\pi_{H\cap Y}(y)$; we shall prove that $x=\pi_H(y)$. Since the result is trivial if $y=x$, we may assume that $y\neq x$ and choose $z\neq x$ in the intersection of the geodesic $[x,y]$ with the carrier $N_Y(H\cap Y)$.  But $x=\pi_{H\cap Y}(z)$ so, in the product coordinates on $N_Y(H\cap Y)\cong(H\cap Y)\times [-1/2,1/2]$, we have $z=(x,t)$ for some $t$. The product coordinates extend naturally to $N_{\univ{X}}(H)$, so the geodesic $[x,z]$ is orthogonal to every geodesic in $H$ through $x$, and hence $[x,y]$ is also. Therefore, $x=\pi_H(y)$ and the result follows.
\end{proof}

The next lemma is useful for relating the $\ell^1$ and $\ell^2$ distances.

\begin{lemma}\label{lem: l2 geodesics and separating hyperplanes}
Let $Y_1,Y_2$ be  convex subcomplexes of a CAT(0) cube complex $\univ{X}$. Suppose that a geodesic $\gamma$ minimises the $\ell^2$-distance from $Y_1$ to $Y_2$. A hyperplane $H$ separates $Y_1$ from $Y_2$ if and only if $\gamma$ crosses $H$.
\end{lemma}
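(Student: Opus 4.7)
The forward direction is immediate: if $H$ separates $Y_1$ from $Y_2$, the endpoints of $\gamma$ lie in opposite open half-spaces of $H$, so the CAT(0) geodesic $\gamma$ must cross $H$. The content of the lemma is the converse.

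For the converse, the plan is to exploit nearest-point projections. Suppose $\gamma$ crosses $H$, so that its endpoints $y_1, y_2$ lie in opposite open half-spaces $\widetilde{U}_-$ and $\widetilde{U}_+$ of $H$. Since $\gamma$ realises $d(Y_1, Y_2)$, we have $d(y_1, y_2) = d(Y_1, y_2) = d(y_1, Y_2)$; strict convexity of the distance function to a closed convex subset of a CAT(0) space then forces $y_1 = \pi_{Y_1}(y_2)$ and $y_2 = \pi_{Y_2}(y_1)$, where $\pi$ denotes the unique nearest-point projection. The argument then reduces to a standard projection property for CAT(0) cube complexes: for a convex subcomplex $Y \subseteq \widetilde{X}$ and a point $x \in \widetilde{X}$, a hyperplane separates $x$ from $\pi_Y(x)$ if and only if it separates $x$ from $Y$. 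Granted this property, applying it to $(x, Y) = (y_2, Y_1)$ yields $Y_1 \subseteq \widetilde{U}_-$, and to $(x, Y) = (y_1, Y_2)$ yields $Y_2 \subseteq \widetilde{U}_+$, showing that $H$ separates $Y_1$ from $Y_2$.

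The hard step is the projection property, which \emph{fails} for general convex subsets of a CAT(0) space (e.g.\ a sufficiently tilted line in the Euclidean plane). The argument for convex subcomplexes of a CAT(0) cube complex exploits the metric product decomposition $N(H) \cong H \times [-1/2, 1/2]$ of the cubical neighbourhood of a hyperplane. Concretely, suppose some $y \in Y$ lies on the same side of $H$ as $x$ but $\pi_Y(x)$ lies on the opposite side; by convexity of $Y$, the geodesic in $Y$ from $\pi_Y(x)$ to $y$ crosses $H$ at a point $z \in Y \cap H$, and one then uses the product structure of $N(H)$ to compare $d(z, x)$ to $d(\pi_Y(x), x)$, producing a point of $Y$ strictly closer to $x$ than $\pi_Y(x)$ and yielding the required contradiction.
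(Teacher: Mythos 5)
Your overall strategy is sound and reaches the right conclusion, but it takes a genuinely different route from the paper's. The paper argues directly: if $H$ met $Y_1$, then $\gamma(0)$, $\gamma(t)$ and $\pi_H(\gamma(0))$ would form a geodesic triangle with two angles $\geq \pi/2$ (at $\pi_H(\gamma(0))$ because it is the projection of $\gamma(0)$ to the convex set $H$, and at $\gamma(0)$ because $\gamma$ realises the distance to $Y_1$ and $\pi_H(\gamma(0))$ lies in $Y_1$ when $H$ meets $Y_1$), contradicting CAT(0); hence $H$ is disjoint from $Y_1$ and likewise from $Y_2$, and separation follows since $\gamma$ crosses $H$. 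You instead project onto the $Y_i$ and invoke a gate property for $\pi_{Y_i}$. Both arguments ultimately rest on the cubical structure near $H$, but the paper's is more economical: it needs only that $\pi_H(\gamma(0)) \in Y_1$ when $H \cap Y_1 \neq \varnothing$, not the full statement that $\pi_{Y}$ is a gate map.

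One step in your sketch of the gate property deserves more care. You find $z \in Y \cap H$ on the geodesic $[\pi_Y(x), y]$ and assert that comparing $d(z,x)$ with $d(\pi_Y(x),x)$ ``produces a point of $Y$ strictly closer to $x$.'' The obvious candidate $z$ need not be closer, and in fact no point of the segment $[\pi_Y(x), z] \subseteq Y$ can be closer, since $d(x,\cdot)$ is non-decreasing along any geodesic issuing from $\pi_Y(x)$ into $Y$. The correct move is to use the product decomposition $N(H)\cap Y \cong (H\cap Y)\times[-1/2,1/2]$ (which holds because $Y$ meets both sides of $H$): writing $p=\pi_Y(x)$, if $p\in N(H)$ then $p=(h,s)$ with $s>0$, $h\in H\cap Y$, and the point $(h,0)\in Y\cap H$ is strictly closer to $x$, since any path from $x$ through $(w,-1/2)$ shortens when its endpoint drops from $(h,s)$ to $(h,0)$; if $p\notin N(H)$, first replace $p$ by the point of $[p,z]$ where it enters $N(H)$ (still in $Y$ and no further from $x$ than $p$) and then repeat. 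Your Euclidean ``tilted line'' remark correctly identifies where the cubical structure is indispensable; this fix is exactly where it enters.
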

\begin{proof}
The necessity is obvious, so it remains to prove that, if $\gamma$ crosses $H$, then $H$ separates $Y_1$ from $Y_2$.  Suppose that $\gamma(0)\in Y_1$ and $\gamma$ crosses $H$ at $\gamma(t)$. Because $\gamma$ minimises the distance, it makes an angle of at least $\pi/2$ with each geodesic in $Y_1$ through $\gamma(0)$. By Lemma \ref{lem: Projecting subcomplexes onto hyperplanes}, $\pi_H(\gamma(0))\in Y_1$. Hence, if $H$ intersects $Y_1$ then $\gamma(0)$, $\gamma(t)$ and $\pi_H(\gamma(0))$ form a triangle with two right angles, contradicting the hypothesis that $\univ{X}$ is CAT(0). Therefore, $H$ is disjoint from $Y_1$, and likewise from $Y_2$. Since $\gamma$ crosses $H$ once, it follows that $H$ separates $Y_1$ from $Y_2$, as required.
\end{proof}

We can now state Haglund's theorem \cite[Theorem H]{haglund_finite_2008}

\begin{theorem}[Haglund]\label{thm: Haglund's theorem}
Let $\univ{X}$ be a $d$-dimensional CAT(0) cube complex and $Y\subseteq \univ{X}^{(0)}$ a set of vertices that is $\kappa$-quasiconvex in the $\ell^1$-metric. There is $R=R(d,\kappa)$ such that every point of $\Hull(Y)$ is at distance at most $R$ from a point of $Y$.
\end{theorem}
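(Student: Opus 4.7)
The convex hull $\Hull(Y)$ is a subcomplex of $\widetilde X$, being an intersection of half-spaces, each of which is a subcomplex.  Consequently every point of $\Hull(Y)$ lies within $\ell^2$-distance $\sqrt{d}/2$ of a vertex of $\Hull(Y)$, and since $\ell^2$-distance is bounded above by $\ell^1$-distance, it suffices to produce a bound $R'=R'(d,\kappa)$ on $d_{\ell^1}(x,Y)$ for every vertex $x\in\Hull(Y)^{(0)}$.

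Fix such a vertex $x$ and let $y_0\in Y$ be an $\ell^1$-closest point, with $N:=d_{\ell^1}(x,y_0)$.  A combinatorial geodesic $\gamma$ from $x$ to $y_0$ crosses pairwise distinct hyperplanes $H_1,\ldots,H_N$, each separating $x$ from $y_0$.  I would exploit the defining property of the cubical hull: $x\in\Hull(Y)$ means no hyperplane separates $x$ from $Y$, so for each $i$ there exists $y_i\in Y$ on the same side of $H_i$ as $x$.  By $\kappa$-quasiconvexity of $Y$, some combinatorial geodesic $\sigma_i$ from $y_0$ to $y_i$ lies inside the $\ell^1$-neighbourhood $N_\kappa(Y)$, and $\sigma_i$ must cross $H_i$ since $y_0$ and $y_i$ lie on opposite sides.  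Consequently $H_i$ has a vertex $w_i$ on its $x$-side within $\ell^1$-distance $\kappa$ of some $y_i'\in Y$, and the minimality of $N$ forces $d_{\ell^1}(x,w_i)\geq N-\kappa$.

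So far I have produced, along the geodesic $\gamma$ from $x$ to $y_0$, a family of $N$ ``witnesses'' $(H_i,w_i,y_i')$ with each $w_i$ next to $H_i$ on the $x$-side and each $y_i'\in Y$ close to $w_i$.  The next and decisive step is to use the dimension hypothesis.  At most $d$ of the hyperplanes $H_i$ can pairwise cross, because a mutually-crossing family spans a $d$-cube.  Via a Ramsey- or Dilworth-style argument applied to the crossing relation on $\{H_1,\ldots,H_N\}$, I would extract a subsequence $H_{i_1},\ldots,H_{i_M}$ of pairwise disjoint hyperplanes with $M$ growing (at least polynomially) with $N$.  Pairwise disjoint hyperplanes in a CAT(0) cube complex are linearly nested, so the corresponding half-spaces form a chain, and the witnesses $w_{i_j}$ (each sandwiched against its hyperplane and $\kappa$-close to a point $y_{i_j}'\in Y$) are forced into a linearly ordered configuration between $x$ and $y_0$.

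The main obstacle lies in this last combinatorial step: converting the local data ``each $w_i$ is within $\kappa$ of $Y$'' into a global shortcut from $x$ to $Y$ of length less than $N$, thereby contradicting the choice of $y_0$.  My intended mechanism is that, once the nested chain $H_{i_1},\ldots,H_{i_M}$ is long enough in terms of $\kappa$, iterating the orthogonal projection $\pi_Y$ along it (or, equivalently, concatenating short detours through $Y$ to jump across successive hyperplanes in the chain) produces a path from $x$ to $Y$ of length bounded by $(M+1)\kappa$ plus $O(d\kappa)$; comparing with $N$ then forces a bound $N\leq R'(d,\kappa)$ of the form $O(d\kappa)$.  Getting the nesting and the projections to cooperate so that the final constant depends only on $d$ and $\kappa$ is the delicate point, and I would expect the precise argument to rely on iterated use of the fact that hyperplanes separating points in $\Hull(Y)$ from $Y$ simply do not exist.
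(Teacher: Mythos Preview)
The paper does not prove this theorem: it is stated as Haglund's theorem and simply cited to \cite[Theorem H]{haglund_finite_2008}, with no argument given.  So there is no ``paper's own proof'' to compare against.

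That said, your sketch is heading in the right direction but does not close.  You correctly isolate the key ingredients: the $N$ hyperplanes $H_1,\ldots,H_N$ separating $x$ from its nearest point $y_0\in Y$; the observation that membership of $x$ in $\Hull(Y)$ forces each $H_i$ to have a point of $Y$ on the $x$-side; and a Dilworth argument (not Ramsey --- Ramsey would only give logarithmic length) extracting a nested chain of length $M\geq N/d$, since pairwise-crossing families have size at most $d$.  Where the argument stalls is your final step: the talk of ``iterating orthogonal projection'' and ``concatenating short detours'' is not a mechanism that produces a clean bound, and you acknowledge as much.

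The missing idea is the median.  Order the nested chain so that $H_{i_1}$ is nearest $x$, and pick $y\in Y$ on the $x$-side of $H_{i_1}$; then $y$ lies on the $x$-side of every $H_{i_j}$.  The median $m=m(x,y_0,y)$ lies on an $\ell^1$-geodesic from $y_0$ to $y$, hence $d_{\ell^1}(m,Y)\leq\kappa$ by quasiconvexity; it also lies on an $\ell^1$-geodesic from $x$ to $y_0$, and since every $H_{i_j}$ separates both $x$ and $y$ from $y_0$, each separates $m$ from $y_0$, giving $d_{\ell^1}(m,y_0)\geq M$ and hence $d_{\ell^1}(x,m)\leq N-M$.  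Combining, $N=d_{\ell^1}(x,Y)\leq (N-M)+\kappa$, so $M\leq\kappa$ and therefore $N\leq d\kappa$.  This replaces your undetermined ``delicate point'' with a two-line inequality and yields the explicit bound $R'=d\kappa$.
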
 

One useful consequence is to characterise geometrically well-behaved subgroups.  A subgroup $H$ of the fundamental group of a non-positively curved cube complex $X$ is called \emph{cubically convex-cocompact} if $H$ acts cocompactly on a subcomplex $Y\subseteq\univ{X}$ that is convex in the $\ell^2$-metric. Note that, by \cite[Theorem 2.13]{haglund_finite_2008}, a subcomplex is convex in the $\ell^2$-metric if and only if it is convex in the $\ell^1$-metric.
\begin{corollary}\label{cor: CCC vs l1 quasiconvex}
Let $X$ be a compact, non-positively curved cube complex. A subgroup $H$ of $\pi_1(X)$ is cubically convex-cocompact if and only if the $H$-orbit of some vertex is quasiconvex in the $\ell^1$-metric.
\end{corollary}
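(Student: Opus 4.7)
The plan is to prove both implications by taking the cubical convex hull $\Hull(Hv)$ as the natural candidate for the convex-cocompact core on one side, and by exploiting the fact that any $\ell^2$-convex subcomplex is also $\ell^1$-convex on the other. There is no substantial obstacle: the corollary essentially packages together Haglund's theorem (Theorem \ref{thm: Haglund's theorem}) with the standard identification of $\ell^2$- and $\ell^1$-convex subcomplexes in CAT(0) cube complexes, so the argument amounts to bookkeeping.

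For the ``if'' direction, I assume that $Hv$ is $\kappa$-quasiconvex in the $\ell^1$ metric and set $Y=\Hull(Hv)$. Because $Hv$ is $H$-invariant, so is the family of half-spaces containing it, and hence so is $Y$. Each such half-space is an $\ell^2$-convex subcomplex, so the intersection $Y$ is an $\ell^2$-convex subcomplex of $\univ{X}$. Since $X$ is compact, $\univ{X}$ is finite-dimensional of some dimension $d$, and Theorem \ref{thm: Haglund's theorem} yields a constant $R=R(d,\kappa)$ such that every vertex of $Y$ lies within $\ell^1$-distance $R$ of $Hv$. The subcomplex of $Y$ spanned by those vertices within $\ell^1$-distance $R$ of $v$ is then finite and forms a compact fundamental domain for the $H$-action on $Y$, establishing that $H$ is cubically convex-cocompact.

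For the converse, suppose that $H$ acts cocompactly on an $\ell^2$-convex subcomplex $Y$ and pick a vertex $v\in Y$. Cocompactness provides a constant $D$ with the property that every vertex of $Y$ lies within $\ell^1$-distance $D$ of $Hv$. At the same time, $Y$ is $\ell^1$-convex, because an $\ell^2$-convex subcomplex of a CAT(0) cube complex is an intersection of half-spaces and hence combinatorially convex (alternatively, this can be extracted from Lemma \ref{lem: l2 geodesics and separating hyperplanes}). Therefore any $\ell^1$-geodesic between two points of $Hv$ stays inside $Y$, and hence within $\ell^1$-distance $D$ of $Hv$. Thus $Hv$ is $D$-quasiconvex in the $\ell^1$ metric, completing the proof.
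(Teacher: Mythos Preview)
Your proof is correct and is precisely the argument the paper has in mind: the corollary is stated without proof, as an immediate consequence of Haglund's theorem, and your write-up supplies exactly the expected details (hull for the ``if'' direction, $\ell^1$-convexity of $\ell^2$-convex subcomplexes for the ``only if''). One minor point: the appeal to Lemma~\ref{lem: l2 geodesics and separating hyperplanes} for the $\ell^1$-convexity of $Y$ is a bit oblique, and you are on firmer ground simply citing the standard fact that an $\ell^2$-convex subcomplex equals the intersection of the half-spaces containing it.
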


\subsection{Simplicial complexes}

As the above discussion makes clear, our investigations of cube complexes will often be metric. By contrast, when simplicial complexes arise, we will be interested in their combinatorial structure, and especially interested in collections of vertices and simplices that separate. Here, we introduce the necessary terminology.

We already saw that, in a metric complex, the link of a point can be thought of metrically as a space of directions. In a simplicial complex, it is convenient to give an equivalent combinatorial definition, which also extends to certain subcomplexes. A subcomplex $B$ of a simplicial complex $A$ is called \emph{full} if, whenever all the vertices of a simplex $\sigma$ are in $B$, then $\sigma$ itself is in $B$.

\begin{figure}[htp]
\begin{center}
\begin{tikzpicture}[scale = .60]
\fill[gray!50] (0-6,0) -- (4.5-6,0) -- (0-6,4.5);
\fill[gray!50] (0-6,0) -- (0-6,4.5) -- (-4.5-6,0);
\fill[gray!50] (0-6,0) -- (-4.5-6,0) -- (0-6,-4.5);
\fill[gray!50] (0-6,0) -- (0-6,-4.5) -- (4.5-6,0);
\draw[thick] (-4.5-6,0) -- (4.5-6,0);
\draw[thick] (0-6,-4.5) -- (0-6,4.5);
\draw[thick, white] (4.5-6,0) -- (0-6,4.5);
\draw[thick, white] (0-6,4.5) -- (-4.5-6,0);
\draw[thick, white] (-4.5-6,0) -- (0-6,-4.5);
\draw[thick, white] (0-6,-4.5) -- (4.5-6,0);

\node at (-5.5,0.5) {$B$};
\node at (-6,-5.25) {$\mathrm{St_A}(B)$};

\draw[thick, gray!50] (-4.5+6,0) -- (4.5+6,0);
\draw[thick, gray!50] (0+6,-4.5) -- (0+6,4.5);
\draw[very thick] (4.5+6,0) -- (0+6,4.5);
\draw[very thick] (0+6,4.5) -- (-4.5+6,0);
\draw[very thick] (-4.5+6,0) -- (0+6,-4.5);
\draw[very thick] (0+6,-4.5) -- (4.5+6,0);

\node at (6.5,0.5) {$B$};
\node at (6,-5.25) {$\mathrm{Lk_A}(B)$};
\end{tikzpicture}
\end{center}
\caption{\label{fig: Simplicial stars and links}  An example of an open star and a link. The example is a closed disc, triangulated as the union of four 2-simplices; the subcomplex $B$ is the vertex in the centre. On the left, the open star is illustrated as the union of the grey open 2-simplices and the black 0- and 1-simplices. On the right, the link is illustrated: it is the union of the 0- and 1-simplices shaded black.}
\end{figure}

\begin{definition}[Stars and links]\label{def: Stars and links}
Let $A$ be a simplicial complex and $B\subseteq A$ a full subcomplex. The \emph{open star} of $B$, $\St_A(B)$, is the union of the interiors of the simplices of $A$ that contain a simplex of $B$ as a face. The \emph{link} of $B$, $\Lk_A(B)$, is the set of faces of simplices of $\St_A(B)$ that are disjoint from $B$. See Figure \ref{fig: Simplicial stars and links}.
\end{definition}

Note that $\St_A(B)$ may not be a subcomplex of $A$, but $\Lk_A(B)$ always is. Furthermore, after putting a suitable metric on $A$, for any vertex $v$, $\Lk_A(v)$ coincides with the metric definition of link given in \S\ref{sec: NPC cube complexes}.

As mentioned above, we are especially interested in the separation properties of subcomplexes.

\begin{definition}[Cut sets and connectivity]\label{def: Cut sets and connectivity}
Let $A$ be a simplicial complex and $B$ a full subcomplex. Write $A_B:=A\smallsetminus \St(B)$. The subcomplex $B$ is said to \emph{separate} $A$ if $A_B$ is disconnected. If $B$ is a simplex we say $B$ is a \emph{cut simplex}.

In the specific case where $B$ is a set of $k$ pairwise non-adjacent vertices, we say that $B$ is a \emph{cut set} for $A$. If $\#B=1$ we say $B$ is a \emph{cut vertex}, and if $\#B=2$ we say $B$ is a \emph{cut pair}.  If $A$ has no cut set of cardinality at most $k$, then $A$ is said to be \emph{$k$-vertex-connected}.
\end{definition}

\section{Free splittings}\label{sec: Free splittings of cube complexes}

In 1936, J.H.C.\ Whitehead explained how to determine whether an element $w$ of a free group $F$ is primitive. For a fixed basis $B$ of $F$, his algorithm uses a graph $\Wh_B(w)$ which is now known as the \emph{Whitehead graph} \cite{whitehead_equivalent_1936}. His algorithm can equally be interpreted as searching for free splittings $F\cong\langle w\rangle* F'$. Many elaborations and generalisations of Whitehead's argument have been given over the years by, among others, Lyon \cite{lyon_incompressible_1980}, Stallings \cite{stallings_whitehead_1999} and Stong \cite{stong_diskbusting_1997}.  Diao--Feighn, building on work of Gersten \cite{gersten_whitehead_1984}, gave an algorithm to compute the Grushko decomposition of a graph of free groups \cite{diao_grushko_2005}.\footnote{See the recent preprint of Dicks for a comprehensive history of this topic \cite[\S2]{dicks_whitehead_2017}.}

The power of this point of view is that  \emph{global} information about a group $G$ -- whether or not $G$ is free, or splits freely, or most generally the structure of the Grushko decomposition of $G$ --  is encoded in the \emph{local} information of a Whitehead graph, which can be read off a presentation. The following theorem, which is the result of combining Whitehead's algorithm with a theorem of Shenitzer \cite[Theorem 5]{shenitzer_decomposition_1955}, exemplifies this idea in the case of one-relator groups.

\begin{theorem}\label{thm: Shenitzer's theorem}
Consider a one-relator group $G=F/\llangle w\rrangle$ and fix a basis $B$ for $F$. If the Whitehead graph $\Wh_B(w)$ is connected without cut vertices, then $G$ does not split freely.
\end{theorem}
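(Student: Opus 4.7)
The plan is to combine Shenitzer's theorem (which reduces free splittings of one-relator groups to the structure of the relator) with Whitehead's analysis of free factor inclusions via the Whitehead graph.

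Suppose for contradiction that $G = F/\llangle w\rrangle$ admits a non-trivial free splitting. By Shenitzer's theorem \cite[Theorem 5]{shenitzer_decomposition_1955}, there is a decomposition $F = F_1 * F_2$ with both factors non-trivial and $w$ conjugate in $F$ into $F_1$; in particular, $w$ lies in a proper free factor of $F$ up to conjugation. On the other hand, the hypothesis that $\Wh_B(w)$ is connected with no cut vertex implies, via the classical Whitehead algorithm, that no Whitehead automorphism in basis $B$ strictly reduces the cyclic length of $w$; by Whitehead's peak-reduction theorem, $w$ therefore achieves the minimum cyclic length in its $\mathrm{Aut}(F)$-orbit.

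The contradiction comes from Whitehead's cut-vertex lemma: a cyclically minimal element that is conjugate into a proper free factor of $F$ must have a Whitehead graph (in any basis in which it is minimal) that is disconnected or possesses a cut vertex. Applied to $w$, this contradicts the hypothesis. The hard part of the argument is this cut-vertex lemma itself; its proof is classical, and proceeds by passing to a basis $B^*$ adapted to the free factor decomposition, observing the disconnection in $\Wh_{B^*}(w)$ (with isolated vertices coming from the unused part of $B^*$), and using peak reduction to propagate this structural feature back to the original basis $B$, as is well documented in the literature (e.g.\ Lyndon--Schupp, or the survey of Dicks \cite{dicks_whitehead_2017}).
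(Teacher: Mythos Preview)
Your proposal is correct and follows precisely the route the paper indicates: the paper does not give its own proof of this statement, but presents it as the combination of Shenitzer's theorem with Whitehead's algorithm, which is exactly what you do. One small simplification: the detour through minimality is not strictly needed, since Stallings' form of the cut-vertex lemma \cite{stallings_whitehead_1999} shows directly that any cyclically reduced word lying (up to conjugacy) in a proper free factor has a disconnected Whitehead graph or one with a cut vertex, in \emph{any} basis.
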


\noindent Note that $\Wh_B(w)$ is exactly the link of the unique vertex in the natural presentation complex for $G$. Our goal is to develop a similar theory for non-positively curved cube complexes. To this end, our first task is to generalise the notion of Whitehead graph to the context of non-positively curved cube complexes.

\subsection{Bounding hyperplanes and Whitehead complexes}

In this section, we introduce the \emph{Whitehead complexes} that will be our main technical tool. While the use we put them to here appears to be new, they bear some similarities with \emph{crossing graphs} and \emph{crossing complexes} for CAT(0) cube complexes, which have been studied extensively in a wide variety of contexts -- see, for instance, \cite{genevois_morphisms_2022,hagen_weak_2014,niblo_singularity_2002,rowlands_relating_2023}, among many other examples.

Let $X$ be a non-positively curved cube complex, let $\univ{X}\to X$ be its universal cover and let $Y\subseteq \univ{X}$ be a convex subcomplex. Equivalently, we may think of $Y$ as a CAT(0) complex equipped with a locally isometric combinatorial map $Y\to X$ -- any such immersion lifts to an embedding $Y\into \univ{X}$, and all such embeddings differ by a deck transformation of $X$.

\begin{definition}\label{def: Bounding hyperplane}
A hyperplane $H$ of $\univ{X}$ \emph{bounds} $Y$ if it is disjoint from $Y$ but no other hyperplane separates $Y$ from $H$. Equivalently, $H$ and $Y$ are disjoint, but $Y$ intersects the (closed) regular neighbourhood $N(H)\cong H\times[0,1]$. We may orient the interval factor so that the intersection is of the form $H_Y\times \{0\}$, where $H_Y$ is a convex subcomplex of $H$ called the \emph{bounding hyperplane component of $Y$ in $H$}.
\end{definition}

The Whitehead complex associated to such a $Y$ encodes the way that the bounding hyperplanes of $Y$ intersect each other.

\begin{definition}\label{def: Whitehead complex}
Let $X$ be a non-positively curved cube complex, and $Y$ a convex subcomplex of the universal cover $\univ{X}$. The \emph{Whitehead complex} of $Y$, $\Wh_X(Y)$, is a simplicial complex defined as follows. The vertices of $\Wh_X(Y)$ are the bounding hyperplanes of $Y$. A finite set of bounding hyperplanes $\curlyH=\{H_0,\ldots, H_k\}$ spans a $k$-simplex $\sigma_\curlyH$ in $\Wh_X(Y)$ if and only if the intersection $\bigcap_{i=0}^kH_i$ is non-empty.
\end{definition}

\begin{remark}\label{rem: WCs are flag}
The Helly property for hyperplanes of CAT(0) cube complexes, proved by Sageev, asserts that if a set of hyperplanes intersect pairwise then their intersection is non-empty \cite[Theorem 4.14]{sageev_ends_1995}. Hence, Whitehead complexes are flag.
\end{remark}

The name  is justified by the following example.

\begin{example}\label{eg: Whitehead's graph}
Let $F$ be a free group, thought of as the fundamental group of a bouquet of circles $R$. (This is equivalent to choosing a basis $B$ for $F$.) The conjugacy class of a non-trivial element $w\in F$ can be represented by an immersion $w:S^1\to R$. The mapping cylinder $X$ of this immersion is naturally a non-positively curved square complex. Let $*$ be the one-point space and consider the map $*\to X$ with image equal to the unique vertex of $R$. Then $\Wh_X(*)$ is the Whitehead graph $\Wh_B(w)$, subdivided once.

More generally, in \cite{cashen_line_2011}, Cashen and Macura defined \emph{generalised Whitehead graphs} $\Wh_B(T)\{w\}$ associated to a word $w$ in a free group $F$, for any subtree $T$ of the Cayley tree $\univ{R}$ of $F$. Our complex $\Wh_X(T)$ is the graph obtained from $\Wh_B(T)\{w\}$ by subdividing once.
\end{example}

The case when $Y$ is a vertex provides another important example.

\begin{example}\label{eg: Whitehead graphs of vertices are links}
Consider $\tilde{y}$ a vertex of $\univ{X}$ with image $y\in X$. The bounding hyperplanes of $\tilde{y}$ are dual to the edges incident at $y$, and a set of such hyperplanes cross exactly when the edges bound a cube with corner $\tilde{y}$. Therefore, $\Wh_X(\tilde{y})\cong\Lk_X(y)$ as simplicial complexes.
\end{example}

Whitehead complexes are useful because they provide a topological model for the complement of $Y$: we shall see that $\Wh_X(Y)$ is homotopy equivalent to the complement $\univ{X}\smallsetminus Y$. In order to describe this homotopy equivalence, it is useful to define the \emph{link} of $Y$, and to extend our discussion of bounding hyperplanes to \emph{codimension-$k$ hyperplanes}.

\begin{definition}\label{def: Codimension-k bounding hyperplanes}
Consider a set of $k$ distinct hyperplanes $H_1,\ldots,H_k$ of $Y$ that pairwise intersect. The intersection
\[
K=\bigcap_{i=1}^k H_i
\]
is called a \emph{codimension-$k$ hyperplane}. It has a closed regular neighbourhood $N(K)\cong K\times [0,1]^k$. If the $H_i$ are all bounding hyperplanes of $Y$ then $K$ is called a \emph{bounding codimension-$k$ hyperplane of $Y$}. In this case, we may orient the interval factors so that
\[
N(K)\cap Y=K_Y\times\{0\}^k
\]
for some convex subcomplex $K_Y$, called the \emph{bounding codimension-$k$ hyperplane component of $K$ in $Y$}.
\end{definition}

The \emph{link} of $K_Y\times\{0\}^k$ in $K_Y\times [0,1]^k$ consists of unit vectors orthogonal to $K_Y\times\{0\}^k$, and is naturally of the form $K_Y\times \sigma_K$, where $\sigma_K$ is a $(k-1)$-simplex. 

Whenever a codimension-$k$ hyperplane $K$ is contained in a codimension-$l$ hyperplane $L$ (necessarily with $l\geq k$), there are natural inclusions $K_Y\subseteq L_Y$ and $\sigma_L\subseteq \sigma_K$, which in turn lead to inclusions
\begin{equation}\label{eqn: Link inclusions}
K_Y\times \sigma_L\subseteq  K_Y\times\sigma_K\,,\,K_Y\times \sigma_L\subseteq  L_Y\times\sigma_L\,.
\end{equation}
Gluing along these inclusions defines the \emph{link} of the subcomplex $Y$.

\begin{definition}\label{defn: Link of a subcomplex}
The \emph{link} of a convex subcomplex $Y$ of a CAT(0) cube complex $\univ{X}$ is the cell complex $\Lk_{\univ{X}}(Y)$ defined from the disjoint union
\[
\coprod_K K_Y\times\sigma_K
\]
over all bounding hyperplane components of any codimension, by gluing along the natural inclusions (\ref{eqn: Link inclusions}).  As usual, when $\univ{X}$ is the universal cover of a non-positively curved cube complex $X$, we will write $\Lk_X(Y)$.
\end{definition}

\begin{remark}\label{def: Metric description of link}
The link $\Lk_{\univ{X}}(Y)$  can also be described metrically as the germs of geodesics in $\univ{X}$ that are orthogonal to $Y$. Thus, orthogonal projection defines a natural map $\hat{\pi}_Y:\univ{X}\smallsetminus Y\to \Lk_{\univ{X}}(Y)$.
\end{remark}

We can now prove that $\Wh_X(Y)$ is homotopy equivalent to the complement of $Y$ in two stages.

\begin{lemma}\label{lem: Complements and links}
For any convex subcomplex $Y$ of a CAT(0) cube complex $\univ{X}$, the orthogonal projection $\hat{\pi}_Y:\univ{X}\smallsetminus Y\to \Lk_{\univ{X}}(Y)$ is a homotopy equivalence.
\end{lemma}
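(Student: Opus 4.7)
The plan is to show $\hat{\pi}_Y$ is a homotopy equivalence by constructing an explicit homotopy inverse $s:\Lk_{\univ{X}}(Y)\to\univ{X}\smallsetminus Y$ and then verifying both compositions using straight-line homotopies along orthogonal geodesics. The crucial geometric input is the local product structure $N(K)\cong K\times[0,1]^k$ of the regular neighbourhoods of bounding codimension-$k$ hyperplanes.

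First I would define $s$. A point of the piece $K_Y\times\sigma_K$ can be written as $(p,v)$, where $p\in K_Y$ and $v\in\R^k_{\geq 0}$ is a unit vector (identifying $\sigma_K$ with the positive unit sphere in $\R^k$). Using $N(K)\cong K\times[0,1]^k$, set $s(p,v):=(p,v)$; since each coordinate $v_i\le 1$, this is a well-defined point of $\univ{X}$, and $v\ne 0$ guarantees that it lies in $\univ{X}\smallsetminus Y$. Compatibility with the gluings (\ref{eqn: Link inclusions}) is immediate from the product structure, since the relations say that a face of $\sigma_K$ where certain coordinates vanish is identified with $\sigma_L$ for a higher-dimensional bounding hyperplane, precisely matching the ambient inclusion $N(K)\hookrightarrow N(L)$. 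For the composition $\hat{\pi}_Y\circ s$: inside the Euclidean region $N(K)$, the intersection $Y\cap N(K)$ is $K_Y\times\{0\}^k$, so the orthogonal projection of $(p,v)$ to $Y$ is $(p,0)=p$ and the initial unit tangent of the straight-line geodesic from $p$ to $(p,v)$ is $v$ itself, giving $\hat{\pi}_Y\circ s=\mathrm{id}$.

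Next I would exhibit a homotopy from $\mathrm{id}$ to $s\circ\hat{\pi}_Y$. Writing $d_x:=d(x,Y)$ and letting $\gamma_x$ denote the unit-speed orthogonal geodesic emanating from $\pi_Y(x)$ through $x$, extended forward past $x$ when $d_x<1$, define
\[
\Phi(x,t) := \gamma_x\bigl((1-t)d_x + t\bigr),\quad t\in[0,1].
\]
Then $\Phi(x,0)=x$ and $\Phi(x,1)=\gamma_x(1)=s(\hat{\pi}_Y(x))$. The parameter $(1-t)d_x+t$ lies between $\min(d_x,1)$ and $\max(d_x,1)$, in particular is strictly positive, so $\Phi(x,t)\notin Y$; when $d_x<1$ the forward extension remains in $N(K)\cong K\times[0,1]^k$ because the unit direction vector has each coordinate $v_i\le 1$.

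The main obstacle is verifying that $\hat{\pi}_Y$ and $\Phi$ are continuous with respect to the quotient topology on $\Lk_{\univ{X}}(Y)$ inherited from $\coprod_K K_Y\times\sigma_K$. Orthogonal projection is $1$-Lipschitz in CAT(0) spaces and geodesics vary continuously with their endpoints, so the pointwise assignment sending $x$ to the pair $(\pi_Y(x),\text{initial direction of }\gamma_x)$ is continuous. The delicate point is that, as $x$ varies, the codimension $k$ of the relevant regular neighbourhood $N(K)$ can jump, and one must check that the resulting transitions are exactly the gluings in (\ref{eqn: Link inclusions}): as some coordinates $v_i$ of the direction vector tend to zero, the corresponding bounding hyperplanes drop out of the intersection and the piece $K_Y\times\sigma_K$ glues compatibly to a piece of smaller codimension. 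This reduces to a local calculation using the flat product structure of tubular neighbourhoods, which I expect to be routine but tedious to write out cleanly.
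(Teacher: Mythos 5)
Your proof is essentially the paper's: the homotopy inverse (your $s$, the paper's $\rho$) sends a germ to the point at parameter~$1$ along the orthogonal geodesic, and the homotopy $\Phi(x,t)=\gamma_x\bigl((1-t)d_x+t\bigr)$ is the same formula the paper uses. You spell out the local product coordinates and raise the continuity issue more explicitly, but the underlying argument is identical.
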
 
\begin{proof}
As in the remark, we identify the link $\Lk_{\univ{X}}(Y)$ with the set of germs of geodesics orthogonal to $Y$. Since $Y$ is a subcomplex, every such germ extends uniquely to a geodesic of length $1$. Therefore, $\hat{\pi}_Y$ has a well-defined right-inverse $\rho:\Lk_{\univ{X}}(Y)\to \univ{X}\smallsetminus Y$ sending the germ of $\gamma$ to $\gamma(1)$. For any $x\notin Y$, let $\gamma_x$ denote the unique geodesic from $\hat{\pi}_Y(x)$ to $x$. Then
\[
(x,t)\mapsto \gamma_x((1-t)d(x,Y)+t)
\]
defines a homotopy equivalence between the identity and $\rho\circ\hat{\pi}_Y$, which completes the proof.
\end{proof}

Finally, we relate links to Whitehead complexes by contracting bounding hyperplane components.

\begin{lemma}\label{lem: Links and Whitehead graphs}
For any convex subcomplex $Y$ of a CAT(0) cube complex $\univ{X}$, there is a homotopy equivalence $\Lk_{\univ{X}}(Y)\simeq \Wh_{\univ{X}}(Y)$.
\end{lemma}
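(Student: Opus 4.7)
The plan is to construct a natural collapse map $p \colon \Lk_{\univ{X}}(Y) \to \Wh_{\univ{X}}(Y)$ and show it is a homotopy equivalence by exploiting contractibility of each bounding hyperplane component. On each building block $K_Y \times \sigma_K$ of the link, I would define $p$ to be the projection onto the second factor. To see that this descends to the quotient defining $\Lk_{\univ{X}}(Y)$, I would check compatibility with the gluings (\ref{eqn: Link inclusions}): a point $(y,t) \in K_Y \times \sigma_L$, viewed either inside $K_Y \times \sigma_K$ via $\sigma_L \subseteq \sigma_K$ or inside $L_Y \times \sigma_L$ via $K_Y \subseteq L_Y$, is sent in either case to $t \in \sigma_L \subseteq \sigma_K$, so $p$ is well defined and continuous.

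The key geometric input for the homotopy equivalence is that each $K_Y$ is a non-empty convex subcomplex of a CAT(0) cube complex, hence contractible. Conceptually, $\Lk_{\univ{X}}(Y)$ is obtained from $\Wh_{\univ{X}}(Y)$ by inflating each open simplex $\sigma_K$ to $K_Y \times \sigma_K$, in a manner compatible with face relations; the map $p$ then collapses these contractible fibers.

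To make this precise, I would induct on a filtration by simplicial dimension. Let $W_n \subseteq \Wh_{\univ{X}}(Y)$ be the subcomplex generated by simplices $\sigma_K$ with $\dim \sigma_K \leq n$, and let $L_n = p^{-1}(W_n)$. Then $L_n$ is obtained from $L_{n-1}$ by attaching the pieces $K_Y \times \sigma_K$ (for $K$ with $\dim \sigma_K = n$) along the cofibrations $K_Y \times \partial \sigma_K \hookrightarrow K_Y \times \sigma_K$, while $W_n$ is obtained from $W_{n-1}$ by attaching the corresponding $\sigma_K$ along $\partial \sigma_K \hookrightarrow \sigma_K$. Since $K_Y$ is contractible, the vertical maps in the relevant pushout squares are homotopy equivalences, and the gluing lemma for homotopy equivalences along cofibrations propagates the induction. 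Passing to the union (equivalently, checking the equivalence on every compact subcomplex and using that both spaces are CW) gives that $p$ itself is a homotopy equivalence.

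The main obstacle, aside from the set-up of notation, is the bookkeeping in the inductive step: one must verify that when attaching $K_Y \times \sigma_K$ to $L_{n-1}$, the image of the attaching map is exactly the subspace of $L_{n-1}$ obtained by gluing all the $K_Y \times \sigma_L$ for proper faces $\sigma_L \subsetneq \sigma_K$, identified via both $\sigma_L \subseteq \sigma_K$ and $K_Y \subseteq L_Y$ simultaneously. This compatibility is exactly what the two inclusions in (\ref{eqn: Link inclusions}) are designed to record, so once the diagram is set up carefully the induction goes through; the contractibility of the $K_Y$ is what does all the homotopical work.
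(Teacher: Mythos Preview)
Your proposal is correct and follows essentially the same approach as the paper: both identify the collapse map that projects out each contractible factor $K_Y$ and conclude it is a homotopy equivalence. The paper's proof is a two-line appeal to \cite[Proposition 0.16]{hatcher_algebraic_2002} (collapsing a contractible subcomplex of a CW pair yields a homotopy equivalence), whereas you unfold the same idea explicitly via a skeletal filtration and the gluing lemma for cofibrations; the homotopical content is identical.
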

\begin{proof}
The link $\Lk_{\univ{X}}(Y)$ is homeomorphic to the realisation of a complex of spaces in the sense of Hatcher, with underlying complex the barycentric subdivision of $\Wh_{\univ{X}}(Y)$. Each vertex corresponds to a codimension-$k$ hyperplane $K$, the associated space is the hyperplane component $K_Y$, and the attaching maps are given by the natural inclusions. Since each $K_Y$ is contractible, it follows that $\Lk_{\univ{X}}(Y)\simeq \Wh_{\univ{X}}(Y)$ by \cite[Proposition 4G.1]{hatcher_algebraic_2002}.
\end{proof}

The previous two lemmas combine to give the following fundamental result.

\begin{lemma}\label{lem: Topological type of Whitehead complex}
For any convex subcomplex $Y$ of a CAT(0) cube complex $\univ{X}$, there is a homotopy equivalence $\univ{X}\smallsetminus Y\simeq\Wh_{\univ{X}}(Y)$.
\end{lemma}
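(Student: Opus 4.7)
The plan is essentially immediate: the statement is a direct concatenation of the two preceding lemmas, and my proof would do nothing more than chain the two homotopy equivalences together.

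First I would recall Lemma \ref{lem: Complements and links}, which supplies a homotopy equivalence
\[
\hat{\pi}_Y : \univ{X}\smallsetminus Y \longrightarrow \Lk_{\univ{X}}(Y)
\]
via orthogonal projection. Then I would invoke Lemma \ref{lem: Links and Whitehead graphs}, which provides a homotopy equivalence $\Lk_{\univ{X}}(Y)\simeq \Wh_{\univ{X}}(Y)$ obtained by collapsing the contractible $K_Y$-factors in each cell $K_Y\times \sigma_K$ of the link. Composing these two homotopy equivalences yields the desired equivalence $\univ{X}\smallsetminus Y \simeq \Wh_{\univ{X}}(Y)$.

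There is really no obstacle here: both inputs are already proved, and homotopy equivalence is transitive. The only thing worth remarking on is that the composition is geometrically meaningful — it sends a point $x\in \univ{X}\smallsetminus Y$ first to the germ of the geodesic from $\pi_Y(x)$ to $x$, which lies in some cell $K_Y\times\sigma_K$, and then to the simplex $\sigma_K$ of $\Wh_{\univ{X}}(Y)$ spanned by the bounding hyperplanes of $Y$ crossed near $\pi_Y(x)$. This geometric description is not strictly needed for the proof, but is useful to keep in mind for subsequent applications of the lemma.
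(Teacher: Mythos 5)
Your proof is correct and follows exactly the same route as the paper, which indeed presents this lemma as an immediate consequence of Lemmas \ref{lem: Complements and links} and \ref{lem: Links and Whitehead graphs} with no additional argument. The extra geometric description of the composed map is a nice addition but, as you note, not required.
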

\noindent The lemma can also be deduced from the nerve theorem, by noting that $\Wh_{\univ{X}}(Y)$ is the nerve of a natural open cover of $\univ{X}\smallsetminus Y$.

Finally, it will be useful to understand the links of vertices in Whitehead complexes. (Note that here,  in the setting of simplicial complexes, we mean link in the sense of Definition \ref{def: Stars and links}.)

\begin{lemma}\label{lem: Links in Whitehead complexes}
Let $Y$ be a convex subcomplex of a CAT(0) cube complex $\univ{X}$ and let $H$ be a bounding hyperplane of $Y$. Then
\[
\Lk_{\Wh_{\univ{X}}(Y)}(H)\cong\Wh_H(H_Y)\,.
\]
\end{lemma}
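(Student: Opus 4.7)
The plan is to exhibit an explicit simplicial isomorphism
\[
\phi : \Lk_{\Wh_{\univ{X}}(Y)}(H) \to \Wh_H(H_Y)
\]
defined on vertices by $\phi(H') := H \cap H'$. The vertices of $\Lk_{\Wh_{\univ{X}}(Y)}(H)$ are bounding hyperplanes $H' \neq H$ of $Y$ in $\univ{X}$ satisfying $H \cap H' \neq \emptyset$, so $K := H \cap H'$ is a hyperplane of $H$; conversely, every hyperplane of $H$ extends uniquely to a hyperplane of $\univ{X}$ crossing $H$, so at the level of vertices $\phi$ identifies hyperplanes of $\univ{X}$ crossing $H$ with hyperplanes of $H$. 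Granting that $\phi$ restricts to a bijection of the correct vertex sets, the simplex structure is automatic: $\{H_1, \dots, H_k\}$ spans a simplex of $\Lk_{\Wh_{\univ{X}}(Y)}(H)$ if and only if $H \cap H_1 \cap \cdots \cap H_k \neq \emptyset$, and this is exactly the condition $\bigcap_i \phi(H_i) \neq \emptyset$ in $H$ that defines simplices of $\Wh_H(H_Y)$.

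The bulk of the work is therefore the vertex bijection, which I would carry out via the cubical product structures. In one direction, let $H'$ be a bounding hyperplane of $Y$ crossing $H$. The product decomposition $N(H) \cong H \times [0,1]$ from Definition~\ref{def: Bounding hyperplane}, together with cubical transversality of $H$ and $H'$, gives $H' \cap N(H) = K \times [0,1]$, while $Y \cap N(H) = H_Y \times \{0\}$ by definition of $H_Y$. Intersecting yields $(K \cap H_Y) \times \{0\} = H' \cap Y \cap N(H) = \emptyset$, so $K \cap H_Y = \emptyset$. For the carrier condition $N_H(K) \cap H_Y \neq \emptyset$, the pair $\{H,H'\}$ determines a codimension-$2$ bounding hyperplane $K$ of $Y$ in the sense of Definition~\ref{def: Codimension-k bounding hyperplanes}; a vertex of $K_Y$ corresponds to a $2$-cube of $\univ{X}$ cut transversally by both $H$ and $H'$ and having a corner $v \in Y$, and the midsection of this $2$-cube by $H$ is an edge of $H$ based at a vertex of $H_Y$ and cut by $K$, producing the required vertex of $N_H(K) \cap H_Y$.

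For the reverse direction, let $K$ be a bounding hyperplane of $H_Y$ in $H$ and let $H'$ be the unique hyperplane of $\univ{X}$ with $H \cap H' = K$. Disjointness $H' \cap Y = \emptyset$ can be established by orthogonal projection: for $y \in H'$, the projection $\pi_H(y)$ lies in $K$ because $H'$ is convex with $H' \cap H = K$; if moreover $y \in Y$, then $\pi_H(y) \in H_Y$, since the $\ell^2$-geodesic from $y$ to $\pi_H(y)$ must enter $N(H)$ through the $Y$-side face $H_Y \times \{0\}$. This forces $\pi_H(y) \in K \cap H_Y = \emptyset$, a contradiction. The carrier condition $N(H') \cap Y \neq \emptyset$ is recovered by reversing the $2$-cube construction: an edge of $H$ in $N_H(K) \cap H_Y$ thickens to a $2$-cube of $\univ{X}$ cut by both $H$ and $H'$ with a corner in $Y$. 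The main obstacle throughout is the clean transfer of the carrier conditions for "bounding" between $H$ and $\univ{X}$ in both directions, for which the shared $2$-cubes coming from the codimension-$2$ bounding-hyperplane formalism are the critical tool.
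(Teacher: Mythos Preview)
Your proposal is correct and follows essentially the same route as the paper: both define the isomorphism by $H' \mapsto H \cap H'$, verify it is a bijection on vertices (bounding hyperplanes of $Y$ crossing $H$ $\leftrightarrow$ bounding hyperplanes of $H_Y$ in $H$), and then check that simplices are preserved. The only cosmetic difference is that the paper invokes flagness to reduce to the $1$-skeleton, whereas you verify the simplex condition $H\cap H_1\cap\cdots\cap H_k\neq\emptyset$ directly; your version in fact supplies considerably more detail on the carrier/disjointness checks than the paper, which simply asserts the bijection.
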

\begin{proof}
Since both simplicial complexes are flag, it suffices to define an isomorphism at the level of 1-skeleta.

The vertices of $\Lk_{\Wh_{\univ{X}}(Y)}(H)$ are, by definition, the bounding hyperplanes of $Y$ that cross $H$. For every such hyperplane $K$, the intersection $H\cap K$ is a bounding hyperplane of $H_Y$ in $H$. Conversely, every bounding hyperplane of $H_Y$ in $H$ extends uniquely to a bounding hyperplane of $Y$ that crosses $H$. Thus, $K\mapsto H\cap K$ defines the required bijection on the 0-skeleta.  The bijection extends to an isomorphism of 1-skeleta, because bounding codimension-three hyperplanes of $Y$ contained in $H$ (which correspond to edges of $\Lk_{\Wh_{\univ{X}}(Y)}(H)$) are the same thing as bounding codimension-2 hyperplanes of $H_Y$ in $H$ (which correspond to edges of $\Wh_H(H_Y)$).
\end{proof}

\subsection{Ends and splittings}

Invoking a famous theorem of Stallings, the number of \emph{(Freudenthal) ends} of the universal cover of a non-positively curved cube complex $X$ detects the free splittings of the fundamental group \cite{stallings_torsion-free_1968}. (Note that $\pi_1(X)$ is necessarily torsion-free.)

\begin{theorem}[Stallings]\label{thm: Ends and splittings}
Let $X$ be a compact, non-positively curved cube complex.
\begin{enumerate}[(i)]
\item $\mathrm{Ends}(\univ{X})=0$ if and only if $\pi_1(X)$ is trivial.
\item $\mathrm{Ends}(\univ{X})>1$ if and only if $\pi_1(X)$ splits freely.\footnote{A non-trivial free splitting corresponds to an action on a simplicial tree without a global fixed point and with trivial edge stabilisers. Note that $\mathbb{Z}\cong 1*_1$ splits freely.}
\end{enumerate}
\end{theorem}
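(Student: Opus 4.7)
The plan is to deduce both statements from classical group-theoretic results, once we have translated the geometric hypothesis on $\univ{X}$ into a statement about $G := \pi_1(X)$. Let me collect the necessary translations first: since $X$ is a compact non-positively curved cube complex, $\univ{X}$ is a proper CAT(0) space on which $G$ acts freely, properly, and cocompactly by cubical isometries. In particular, $G$ is finitely generated and quasi-isometric to $\univ{X}$, so $\mathrm{Ends}(\univ{X}) = \mathrm{Ends}(G)$ by quasi-isometry invariance of Freudenthal ends. Moreover, any finite subgroup of $G$ would fix a point of $\univ{X}$ by the Bruhat--Tits fixed-point theorem \cite[Corollary II.2.8]{bridson_metric_1999}, contradicting freeness of the action; thus $G$ is torsion-free.

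For (i), I would use the classical fact (also due to Hopf/Freudenthal) that a finitely generated group has $0$ ends if and only if it is finite. Combined with torsion-freeness of $G$, having $0$ ends is equivalent to $G$ being trivial, which is what we want.

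For (ii), the main input is Stallings' theorem on ends of groups: a finitely generated group with more than one end admits a non-trivial splitting as an amalgamated free product or HNN extension over a finite subgroup, and conversely any such splitting produces more than one end. Since finite subgroups of the torsion-free group $G$ are trivial, any such splitting is in fact a free splitting (allowing the degenerate HNN extension $\Z \cong 1 *_1$, which accounts for the $\mathrm{Ends}(\univ{X}) = 2$ case). Conversely, a non-trivial free splitting of $G$ yields an action on a simplicial tree without global fixed point with trivial edge stabilisers, from which one extracts more than one end in $G$ and hence in $\univ{X}$.

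Since the hard work is encapsulated in the two invoked theorems (Stallings' ends theorem and the Hopf--Freudenthal classification of groups with zero or one end), there is no substantial obstacle in this proof: the only step requiring care is the observation that $G$ is torsion-free, which is what allows Stallings' finite edge groups to be collapsed to the trivial group and which reduces statement (i) to the triviality of $G$. Everything else is a routine invocation of standard ends-of-groups theory once the geometric setup provides the quasi-isometry $G \sim_{qi} \univ{X}$.
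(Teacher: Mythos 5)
The paper does not give a proof of this theorem; it simply cites Stallings' ends theorem and notes in passing that $\pi_1(X)$ is torsion-free, which is precisely the route you take. Your proposal is a correct elaboration of that same argument, supplying the standard intermediate facts (quasi-isometry invariance of ends, Cartan/Bruhat--Tits fixed-point theorem for torsion-freeness, and the Hopf--Freudenthal classification) that the paper leaves implicit.
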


We will see that the number of ends can be conveniently encoded in the connectivity properties of Whitehead graphs. The next result is prototypical, and deals with the case when $X$ is simply connected.

\begin{proposition}\label{prop: Whitehead complex and 0 ends}
Let $X$ be a compact, non-positively curved cube complex. There is a compact CAT(0) cube complex $Y$ mapping to $X$ with $\Wh_X(Y)=\varnothing$ if and only if $\pi_1(X)$ is trivial.
\end{proposition}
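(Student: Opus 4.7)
The plan is to apply Lemma \ref{lem: Topological type of Whitehead complex}, which identifies the homotopy type of $\widetilde{X}\smallsetminus Y$ with that of $\Wh_X(Y)$. The forward direction is immediate: if $\pi_1(X)$ is trivial, then $Y=X=\widetilde{X}$ is itself a compact CAT(0) cube complex, and no hyperplane of $\widetilde{X}$ can be disjoint from $Y=\widetilde{X}$, so $\Wh_X(Y)=\varnothing$.

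For the converse, suppose $Y$ is a compact CAT(0) cube complex equipped with a local isometry $Y\to X$ and $\Wh_X(Y)=\varnothing$. As noted at the start of \S\ref{sec: Free splittings of cube complexes}, any such immersion lifts to a convex embedding $Y\hookrightarrow\widetilde{X}$, which I will use to identify $Y$ with its image. By Lemma \ref{lem: Topological type of Whitehead complex}, $\widetilde{X}\smallsetminus Y$ is homotopy equivalent to the empty simplicial complex; since a non-empty topological space cannot be homotopy equivalent to $\varnothing$, this forces $Y=\widetilde{X}$. (A combinatorial alternative to this last step: if some vertex $v$ lay in $\widetilde{X}\smallsetminus Y$, then an $\ell^1$-geodesic from $v$ to $Y$ would cross hyperplanes separating $v$ from $Y$, and the hyperplane dual to the final edge would be a bounding hyperplane of $Y$, contradicting $\Wh_X(Y)=\varnothing$.)

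It remains to deduce that $\pi_1(X)$ is trivial from the fact that $\widetilde{X}=Y$ is compact. The deck group $\pi_1(X)$ acts freely on the compact space $\widetilde{X}$ with compact quotient $X$, which forces $\pi_1(X)$ to be finite. But a finite group of isometries of the complete CAT(0) space $\widetilde{X}$ must have a fixed point by the Bruhat--Tits centre-of-mass theorem, which combined with freeness of the action yields $\pi_1(X)=1$. The only step with genuine content is the appeal to Lemma \ref{lem: Topological type of Whitehead complex} (or equivalently the combinatorial argument with bounding hyperplanes); the rest is just bookkeeping in CAT(0) geometry.
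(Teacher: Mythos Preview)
Your proof is correct and follows essentially the same line as the paper's: both arguments observe that $\Wh_X(Y)=\varnothing$ forces $Y=\widetilde{X}$, whence compactness of $Y$ gives compactness of $\widetilde{X}$ and hence finiteness of $\pi_1(X)$. The only cosmetic differences are that the paper dispenses with the $Y=\widetilde{X}$ step in one sentence directly from the definition of bounding hyperplanes (rather than invoking Lemma~\ref{lem: Topological type of Whitehead complex}), and concludes triviality from the standing fact that $\pi_1(X)$ is torsion-free rather than from Bruhat--Tits.
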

\begin{proof}
The Whitehead graph $\Wh_X(Y)$ is empty if and only if $Y=\univ{X}$. Thus such a compact $Y$ exists if and only if $\univ{X}$ is compact, meaning that $\pi_1(X)$ is finite, i.e.\ trivial.
\end{proof}

In  \S\ref{sec: CAT(0) cube complexes}, we saw that is is often convenient to assume that our cube complexes are essential.  In fact, when studying free splittings, we will only need a weaker notion.

\begin{definition}\label{def: Freely essential cube complex}
A CAT(0) cube complex $\univ{X}$ is called \emph{freely inessential} if some (necessarily compact) hyperplane $H \subseteq\univ{X}$ has the property that one of the associated half-spaces is compact. Otherwise, $\univ{X}$ is called \emph{freely essential}. 
\end{definition}

\begin{remark}\label{rem: Freely essential remarks}
Any compact CAT(0) cube complex is freely inessential, unless it is a point, and any essential cube complex is freely essential.
\end{remark}

The next result explains how Whitehead complexes encode free splittings of the fundamental group.

\begin{proposition}\label{prop: Detecting free splittings using Whitehead graphs}
Let $\univ{X}$ be a freely essential CAT(0) cube complex. There is a compact, convex $Y\subseteq\univ{X}$ such that $\Wh_{\univ{X}}(Y)$ is non-empty and disconnected if and only if $\univ{X}$ has more than one end.
\end{proposition}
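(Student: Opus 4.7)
The plan is to invoke Lemma \ref{lem: Topological type of Whitehead complex}, by which $\Wh_{\univ{X}}(Y)$ is homotopy equivalent to the complement $\univ{X}\smallsetminus Y$. The statement then translates into: there is a compact convex proper subcomplex $Y \subseteq \univ{X}$ with $\univ{X}\smallsetminus Y$ disconnected if and only if $\univ{X}$ has more than one end. Both directions now concern the component structure of the complement of a compact convex subcomplex, and the role of freely essential will be to force those components to correspond to genuine ends.

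For the forward direction, assume such a $Y$ exists. The key step is to show that \emph{every} component of $\univ{X}\smallsetminus Y$ is unbounded. Fixing $x$ in a given component, I would look at the $\ell^2$-geodesic from the nearest point $\pi_Y(x)$ to $x$; its first hyperplane crossing $H$ must be a bounding hyperplane of $Y$, since any other hyperplane separating $Y$ from $H$ would have been crossed strictly earlier. The open half-space $U$ of $H$ on the side away from $Y$ is path-connected, contains $x$, is disjoint from $Y$, and is unbounded by the freely essential hypothesis. Hence the component of $x$ in $\univ{X}\smallsetminus Y$ is unbounded, and with at least two such unbounded components one concludes that $\univ{X}$ has more than one end.

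For the reverse direction, starting from $\univ{X}$ having more than one end, pick a compact $K$ such that $\univ{X}\smallsetminus K$ has at least two unbounded components. I would enlarge $K$ to the finite subcomplex $K'$ consisting of all closed cubes that meet $K$ (finite because $\univ{X}$ is locally finite), and set $Y := \Hull(V(K'))$. Any finite set of vertices is trivially $\ell^1$-quasiconvex, so Haglund's theorem (Theorem \ref{thm: Haglund's theorem}) ensures that $Y$ lies in a bounded $\ell^1$-neighbourhood of $V(K')$ and is therefore compact; $Y$ is convex by construction, and it contains $K' \supseteq K$. Since $K \subseteq Y$, any path in $\univ{X}\smallsetminus Y$ is also a path in $\univ{X}\smallsetminus K$, so distinct unbounded components of $\univ{X}\smallsetminus K$ remain in distinct (still unbounded) components of $\univ{X}\smallsetminus Y$. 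Hence $\univ{X}\smallsetminus Y$ is non-empty and disconnected, and so is $\Wh_{\univ{X}}(Y)$ by Lemma \ref{lem: Topological type of Whitehead complex}.

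The main obstacle is the forward direction, where the freely essential hypothesis is used in an essential way: without it, a component of $\univ{X}\smallsetminus Y$ could be trapped inside a compact half-space of some bounding hyperplane, producing a disconnected Whitehead complex without yielding a second end. Once that is dealt with, the reverse direction is a more routine application of Haglund's theorem to produce the required compact convex hull.
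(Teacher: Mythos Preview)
Your proposal is correct and follows essentially the same approach as the paper: both directions hinge on Lemma~\ref{lem: Topological type of Whitehead complex} to pass between $\Wh_{\univ{X}}(Y)$ and $\univ{X}\smallsetminus Y$, the forward direction uses the freely essential hypothesis to force each complementary component to contain an unbounded half-space of a bounding hyperplane, and the reverse direction uses Haglund's theorem to replace a compact separating set by its compact convex hull. Your forward argument is slightly more explicit than the paper's---you locate a specific bounding hyperplane via the first crossing of the geodesic from $\pi_Y(x)$ to $x$, whereas the paper simply picks any bounding hyperplane in the relevant component of $\Wh_{\univ{X}}(Y)$ and argues contrapositively---but the content is the same.
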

\begin{proof}
Suppose such a $Y$ exists. Since $\Wh_{\univ{X}}(Y)$ is disconnected, Lemma \ref{lem: Topological type of Whitehead complex} implies that $\univ{X}\smallsetminus Y$ is also disconnected, so $Y$ separates $\univ{X}$ into at least two components. If one of these components is bounded then any bounding hyperplane in the corresponding component of $\Wh_{\univ{X}}(Y)$ has a compact half space, so $\univ{X}$ is freely inessential. Otherwise, all components are unbounded so $\univ{X}$ has more than one end.

For the converse direction, suppose that $\univ{X}$ has more than one end, so there is a compact subcomplex $Y\subseteq \univ{X}$ such that $\univ{X}\smallsetminus Y$ has at least two unbounded components. By Theorem \ref{thm: Haglund's theorem}, we may replace $Y$ by its convex hull and assume that $Y$ is convex. Since $\univ{X}\smallsetminus Y$ has at least two components, so does $\Wh_{\univ{X}}(Y)$ by Lemma \ref{lem: Topological type of Whitehead complex}.
\end{proof}

The proposition connects Whitehead complexes and free splittings, but the resulting criterion is ineffective because there is no bound on the possible size of a $Y$ for which $\Wh_X(Y)$ is disconnected. To make the criterion effective, we need to understand how to build up Whitehead complexes inductively by gluing, generalising the \emph{splicing} construction of Manning \cite{manning_virtually_2010} used by Cashen--Macura \cite{cashen_line_2011}.

\subsection{Connected sums of simplicial complexes}

The definition of the connected sum of two simplicial complexes is directly analogous to the definition for manifolds.

\begin{definition}[Connected sum of complexes]\label{def: Splicing simplicial complexes}
Let $A,B$ be simplicial complexes, let $a\in A$, $b\in B$ be vertices, and let $\phi:\Lk_A(a)\to\Lk_B(b)$ be an isomorphism. The complex
\[
A\#_\phi B:=A_a\cup_\phi B_b
\]
is the \emph{connected sum of $A$ to $B$ along $\phi$}. (Often, we will have a natural identification $v=a=b$, and $\phi$ will be implicit. In this case, we may write $A\#_vB$ instead.)
\end{definition} 

\begin{remark}\label{rem: Splicing and connected sum}
Definition \ref{def: Splicing simplicial complexes} generalises the \emph{splicing} construction used by Manning to analyse Whitehead graphs in finite-index subgroups \cite{manning_virtually_2010}, and exploited further by Cashen and Macura \cite{cashen_splitting_2016,cashen_line_2011}.  Up to taking first subdivisions, Manning's definition coincides with Definition \ref{def: Splicing simplicial complexes} when restricted to graphs. (Recall from Example \ref{eg: Whitehead's graph} that the classical Whitehead graph is the first subdivision of our Whitehead complex.)
\end{remark}

The next result explains how cutting $Y$ along a hyperplane decomposes the Whitehead complex as a connected sum. When a convex subcomplex $Y\subseteq \univ{X}$ intersects a hyperplane $H$ of $\univ{X}$, we will abuse notation by identifying $H$ with $H\cap Y$. Cutting $Y$ along $H$ then leads to the decomposition
\[
Y=Y_1\cup_H Y_2\,;
\]
note that $H$ is then a bounding hyperplane of both $Y_1$ and $Y_2$.

\begin{proposition}[Cutting and connected sum]\label{prop: Constructing Whitehead complexes by splicing}
Let $Y$ be a convex subcomplex of a CAT(0) cube complex $\univ{X}$. If
\[
Y=Y_1\cup_H Y_2
\] 
for some hyperplane $H$ of $\univ{X}$ then 
\[
\Wh_X(Y)\cong \Wh_X(Y_1)\#_H\Wh_X(Y_2)\,.
\]
\end{proposition}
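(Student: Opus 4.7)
The plan is to construct an explicit bijection at both the vertex and simplex levels, partitioning the bounding hyperplanes according to how they relate to $H$.

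First I would classify the bounding hyperplanes of $Y$ into three disjoint types: (a) those lying in the open $Y_1$-side half-space of $H$ (hence disjoint from $H$), (b) symmetrically on the $Y_2$-side, and (c) those that cross $H$. I would then show that the bounding hyperplanes of $Y_i$ are exactly $\{H\}$ together with the (a)- and (c)-types (for $i=1$), or (b)- and (c)-types (for $i=2$). The key observation, via Lemma~\ref{lem: l2 geodesics and separating hyperplanes}, is that $H$ is the unique hyperplane separating $Y_1$ from $Y_2$. Hence any hyperplane $K'\neq K$ purporting to separate a candidate bounding hyperplane $K$ from $Y_i$ but not from $Y$ is forced to be $H$ itself, and $H$ cannot separate a type-(c) hyperplane from either $Y_i$ since such $K$ has points on both sides of $H$.

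Second, I would match simplices. Because type-(a) and type-(b) vertices are separated by $H$ and each disjoint from $H$, they cannot share a simplex in $\Wh_X(Y)$, so simplices split cleanly between the two sides. Pure type-(c) simplices occur in both $\Wh_X(Y_1)$ and $\Wh_X(Y_2)$; invoking the Helly property (Remark~\ref{rem: WCs are flag}), any pairwise-intersecting family of type-(c) hyperplanes has common intersection meeting $H$, which places the simplex in $\Lk_{\Wh_X(Y_i)}(H)$ for each $i$ and thus ensures coherent identification under the connected sum.

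Finally, for the gluing isomorphism, Lemma~\ref{lem: Links in Whitehead complexes} gives $\Lk_{\Wh_X(Y_i)}(H)\cong\Wh_H(H_{Y_i})$. Convexity of $Y$, together with the decomposition $Y=Y_1\cup_H Y_2$, should force $H_{Y_1}=H_{Y_2}$ as common convex subcomplexes of $H$ (both equal to the trace of $Y$ along $H$), providing the natural isomorphism implicit in the notation $\#_H$. The most delicate step will be the vertex classification itself: confirming that every type-(c) bounding hyperplane of $Y$ actually bounds both $Y_1$ and $Y_2$, and conversely that no extraneous bounding hyperplane of $Y_i$ arises beyond those already accounted for by types (a)--(c). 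Both directions hinge on the uniqueness of $H$ as a separator between $Y_1$ and $Y_2$ and on the fact that a hyperplane crossing $H$ cannot be separated from either half-space by $H$.
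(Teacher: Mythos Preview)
Your proposal is correct and follows essentially the same approach as the paper's proof: both classify the bounding hyperplanes (and, more generally, the bounding codimension-$k$ hyperplanes) of $Y$ according to whether they cross $H$ or lie to one side of it, and then check that this trichotomy matches the simplices of the connected sum. The paper's proof is terser---it phrases everything directly in terms of codimension-$k$ hyperplanes and simply asserts the key fact that a codimension-$k$ hyperplane crossing $H$ bounds $Y_1$ if and only if it bounds $Y_2$---whereas you supply more justification (the uniqueness of $H$ as separator via Lemma~\ref{lem: l2 geodesics and separating hyperplanes}, the Helly property for placing type-(c) simplices in the link). Your extra care is not misplaced, but the underlying argument is the same.
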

\begin{proof}
For notational convenience, write $W_i=\Wh_X(Y_i)$ for $i=1,2$, and $W=\Wh_X(Y)$. Our first task is to define the isomorphism $\Lk_{W_1}(H)\cong \Lk_{W_2}(H)$ needed to define the connected sum. 

A $(k-1)$-simplex $\sigma_K$ of $W_i$ corresponds to a bounding codimension-$k$ hyperplane $K$ of $Y_i$. This simplex $\sigma_K$ is contained in $\Lk_{W_i}(H)$ exactly when $K$ crosses $H$. A codimension-$k$ hyperplane that crosses $H$ bounds $Y_1$ if and only it bounds $Y_2$, so $\Lk_{W_1}(H)$ is naturally identified with $\Lk_{W_2}(H)$ as required. Let $L$ denote the natural copy of $\Lk_{W_1}(H)\equiv \Lk_{W_2}(H)$ in $W_1\#_HW_2$.

To see that $W\cong W_1\#_H W_2$, note that the $(k-1$)-simplices of $W_1\#_H W_2$ correspond bijectively to bounding codimension-$k$ hyperplanes of $Y$. Indeed, the simplices of $W_1\#_H W_2$ are of three kinds:
\begin{enumerate}[(i)]
\item simplices $\sigma_K$ in $L$ correspond to bounding codimension-$k$ hyperplanes of $Y$ that cross $H$;
\item simplices $\sigma_K$ of $W_1$ that are not adjacent to $H$ correspond to bounding codimension-$k$ hyperplanes of $Y_1$ disjoint from $H$;
\item simplices $\sigma_K$ of $W_2$ that are not adjacent to $H$ correspond to bounding codimension-$k$ hyperplanes of $Y_2$ disjoint from $H$.
\end{enumerate}
Since this list also covers all the codimension-$k$ hyperplanes of $Y$, and inclusions are preserved, the result follows.
\end{proof}

\subsection{Cut-set lemmas in connected sums}\label{sec: Splicing cut-set lemmas}

To understand splittings of fundamental groups of non-positively curved cube complexes, the key idea is to combine Proposition \ref{prop: Constructing Whitehead complexes by splicing} with an understanding of how the connectivity properties of simplicial complexes, in the sense of Definition \ref{def: Cut sets and connectivity}, change under connected sum. The first lemma is prototypical. Let $b_0(X)$ denote the 0th Betti number of a space $X$, i.e.\ the number of connected components.

\begin{lemma}\label{lem: Connected sum and components}
Suppose that $A$ and $B$ are simplicial complexes. Then
\[
b_0(A\#_v B)= b_0(B)
\]
unless $v$ is a cut vertex of $A$.
\end{lemma}
\begin{proof}
We work with reduced homology. Let $L$ denote the link of $v$ and $C(L)$ the cone on $L$. Then $B=B_v\cup_L C(L)$ so, because $C(L)$ is contractible, the Mayer--Vietoris theorem gives that
\[
\tilde{H}_0(L)\to \tilde{H}_0(B_v)\to \tilde{H}_0(B)\to 0
\]
is exact. The connected sum can be written as $A\#_v B= A_v\cup_L B_v$, so the Mayer--Vietoris theorem also gives that
\[
\tilde{H}_0(L)\to \tilde{H}_0(A_v)\oplus \tilde{H}_0(B_v)\to \tilde{H}_0(A\#_v B)\to 0
\]
is exact. Unless $v$ is a cut vertex in $A$, $\tilde{H}_0(A_v)$ is trivial, so the two sequences combine to show that
\[
 \tilde{H}_0(A\#_v B) \cong \operatorname{coker}(\tilde{H}_0(L)\to \tilde{H}_0(B_v) ) \cong \tilde{H}_0(B)
\]
as required.
\end{proof}

We will usually only apply the lemma when $B$ is connected, in which case it takes the following form.

\begin{lemma}\label{lem: Disconnected splicing}
Suppose that $A$ and $B$ are simplicial complexes. If $B$ is connected but $A\#_v B$ is disconnected, then $v$ is a cut vertex in $A$.
\end{lemma}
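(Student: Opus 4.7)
The plan is to prove the contrapositive: assuming $v$ is \emph{not} a cut vertex of $A$, so that $A_v := A \smallsetminus \St_A(v)$ is connected, I will show that $A \#_v B$ is connected whenever $B$ is. Recall that $A \#_v B = A_v \cup_\phi B_v$ with $\phi$ identifying $\Lk_A(v)$ with $\Lk_B(v)$; both links sit inside the respective $A_v$ and $B_v$, since their simplices are disjoint from $v$ and therefore miss the corresponding open stars. Under the assumption, $A_v$ is a connected subspace of $A \#_v B$ containing the identified copy of $\Lk_B(v)$.

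The main step is to show that every point of $B_v$ is path-connected inside $B_v$ to a point of $\Lk_B(v)$. I would take $y \in B_v$ and use connectedness of $B$ to choose a path $\gamma : [0,1] \to B$ with $\gamma(0) = y$ and $\gamma(1) = v$. The open star $\St_B(v)$ is an open neighbourhood of $v$, so $t_0 := \inf\{\, t : \gamma(t) \in \St_B(v) \,\}$ satisfies $t_0 < 1$ and $\gamma([0, t_0]) \subseteq B_v$. By continuity $\gamma(t_0)$ lies in the topological frontier of $\St_B(v)$, which coincides with $\Lk_B(v)$, yielding the required path from $y$ to $\gamma(t_0) \in \Lk_B(v)$ inside $B_v$. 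Concatenating such paths with paths in $A_v$ then gives connectedness of $A \#_v B$, contradicting the hypothesis, so $v$ must be a cut vertex of $A$.

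The only edge case is when $\Lk_A(v)$ (equivalently $\Lk_B(v)$) is empty: then $v$ is isolated in $B$, so connectedness of $B$ forces $B = \{v\}$, whence $B_v = \varnothing$ and $A \#_v B = A_v$ is immediately connected. I do not anticipate a genuine obstacle here; the real content of the argument is the frontier observation that connectedness of $B$ forces every component of $B_v$ to meet $\Lk_B(v)$, after which the conclusion follows formally from connectedness of $A_v$.
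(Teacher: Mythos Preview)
Your argument is correct. The contrapositive reduction is sound, and the frontier observation---that the topological boundary of the open star $\St_B(v)$ in $B$ is exactly $\Lk_B(v)$---is the heart of the matter; combined with local path-connectedness of simplicial complexes it does show that every component of $B_v$ meets $\Lk_B(v)$, which is all you need.

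The paper proceeds differently: it applies the reduced Mayer--Vietoris sequence twice. First, writing $B=B_v\cup_L C(L)$ with $L=\Lk_B(v)$ and $C(L)$ the cone, exactness of $\tilde H_0(L)\to\tilde H_0(B_v)\to\tilde H_0(B)\to 0$ together with connectedness of $B$ gives that $\tilde H_0(L)$ surjects $\tilde H_0(B_v)$; this is the homological version of your frontier argument. Second, Mayer--Vietoris for $A\#_vB=A_v\cup_L B_v$ gives $\tilde H_0(L)\to\tilde H_0(A_v)\oplus\tilde H_0(B_v)\to\tilde H_0(A\#_vB)\to 0$, and since $A\#_vB$ is disconnected the map from $\tilde H_0(L)$ cannot surject, forcing $\tilde H_0(A_v)\neq 0$.

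Your route is more elementary and entirely self-contained. The paper's homological phrasing, on the other hand, packages the argument in a way that immediately adapts to the subsequent cut-simplex and cut-set lemmas (where one iterates the same Mayer--Vietoris template), and it sidesteps any fuss about path-connectedness or frontiers. Either approach is perfectly adequate for this lemma.
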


Since we will need to apply Lemma \ref{lem: Disconnected splicing} inductively, we also need to understand when cut vertices can arise, and more generally cut simplices.

\begin{lemma}\label{lem: Cut simplices from splicing}
Suppose that $A$ and $B$ are connected and a simplex $\sigma$ of $A$ is a cut simplex of $A\#_v B$. Then one of the following holds:
\begin{enumerate}[(i)]
\item $v$ is a cut vertex in $B$;
\item $\sigma$ is a cut simplex in either $A$ or $B$; or
\item $\sigma$ is contained in the link of $v$, and the simplex $\tau=\langle\sigma,v\rangle$ spanned by $\sigma$ and $v$ together is a cut simplex in both $A$ and $B$.
\end{enumerate}
In particular, if $A\#_v B$ has a cut simplex then either $A$ or $B$ has a cut simplex.
\end{lemma}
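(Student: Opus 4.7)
The plan is to adapt the Mayer--Vietoris strategy of Lemma \ref{lem: Disconnected splicing}, splitting into cases on whether $\sigma$ lies in the link $L := \Lk_A(v)$ (identified with $\Lk_B(v)$ via $\phi$). Since $\sigma$ must be a simplex both of $A$ and of $A \#_v B$, necessarily $v \notin \sigma$ and $\sigma \subseteq A_v$.

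First suppose $\sigma \notin L$, so no simplex of $A$ contains both $\sigma$ and $v$ as faces. Then $\St_{A \#_v B}(\sigma)$ lies entirely within $A_v$, and removing it leaves $L$ undisturbed; one identifies $(A \#_v B)_\sigma$ with $A_\sigma \#_v B$, using the same link $L$. If $A_\sigma$ is disconnected, conclusion (ii) holds ($\sigma$ cuts $A$). Otherwise $A_\sigma$ is connected, and Lemma \ref{lem: Disconnected splicing}, applied with the roles of the two factors swapped (valid since its Mayer--Vietoris proof is symmetric in the two sides), forces $v$ to be a cut vertex of $B$, giving (i).

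Now suppose $\sigma \in L$, and set $\tau = \langle \sigma, v \rangle$, a simplex of both $A$ and $B$. A short computation gives $\Lk_{A_\sigma}(v) = L \setminus \St_L(\sigma) = \Lk_{B_\sigma}(v)$, whence $(A \#_v B)_\sigma \cong A_\sigma \#_v B_\sigma$. If either of $A_\sigma, B_\sigma$ is disconnected, (ii) holds; otherwise two symmetric applications of Lemma \ref{lem: Disconnected splicing} force both $(A_\sigma)_v$ and $(B_\sigma)_v$ to be disconnected. To conclude (iii), I would then upgrade disconnectedness of the intersection $A_\sigma \cap A_v = (A_\sigma)_v$ to disconnectedness of the union $A_\tau = A_\sigma \cup A_v$. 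The Mayer--Vietoris sequence
\[
\tilde{H}_0\bigl((A_\sigma)_v\bigr) \to \tilde{H}_0(A_\sigma) \oplus \tilde{H}_0(A_v) \to \tilde{H}_0(A_\tau) \to 0
\]
is not enough on its own, so the strategy is to combine it with the Mayer--Vietoris sequences coming from the decompositions $A = A_\sigma \cup C(\Lk_A(\sigma))$ and $A = A_v \cup C(L)$ (whose second summands are contractible cones on the respective links), together with the hypothesis that $A$ itself is connected, and then run a short diagram chase to promote a nonzero class from $\tilde{H}_0((A_\sigma)_v)$ into $\tilde{H}_0(A_\tau)$. The symmetric argument handles $B$, completing (iii). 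The concluding sentence of the lemma is then immediate: any cut simplex of $A \#_v B$ lies in $A_v$ or (up to swapping the two factors) $B_v$, so the trichotomy applies and in each of (i)--(iii) one of $A,B$ acquires a cut simplex, namely $v$, $\sigma$, or $\tau$ respectively.

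The main obstacle I anticipate is the diagram chase at the end of Case 2: Lemma \ref{lem: Disconnected splicing} directly produces only disconnectedness of the intersection $(A_\sigma)_v$, and converting this into disconnectedness of the union $A_\tau$ requires carefully tracking how the connectedness of $A$ flows through the link $L$ and the cone $C(L)$. The bookkeeping of cases in (i)--(iii) essentially records the various ways this propagation can fail.
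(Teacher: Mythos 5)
Your case split and applications of Lemma \ref{lem: Disconnected splicing} reproduce the paper's argument, and your Case~1 is in fact slightly more scrupulous than the paper's in recording the subcase where $A_\sigma$ itself is disconnected. However, the ``main obstacle'' you anticipate at the end of Case~2 does not exist, and the Mayer--Vietoris diagram chase you sketch to fill it is unnecessary. The culprit is the identification $A_\tau = A_\sigma \cup A_v$. That would be right if $\St(\tau)$ meant the set of simplices having $\tau$ itself as a face, but Definition \ref{def: Stars and links} takes $\St_A(B)$ to be the set of simplices of $A$ containing \emph{some} simplex of the full subcomplex $B$ as a face. With $B$ the subcomplex spanned by $\tau$, this is every simplex meeting $\tau$ in at least one vertex, so $A_\tau$ consists of the simplices of $A$ disjoint from every vertex of $\tau$ --- which is precisely $A_\sigma \cap A_v = (A_\sigma)_v$, not the union. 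Hence the disconnectedness of $(A_\sigma)_v$ and $(B_\sigma)_v$, which you have already obtained, is literally the statement that $\tau$ is a cut simplex of $A$ and of $B$; item (iii) follows with no further work, and this is exactly where the paper's proof stops.

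A secondary caution about Case~1: the identity $(A\#_v B)_\sigma = A_\sigma \#_v B$ (which the paper also asserts without comment) requires $\sigma$ to be disjoint from the vertex set of $L$, not merely $\sigma \notin L$. If $\sigma$ has a vertex in $L$, then simplices of $B_v$ incident to that vertex also meet $\sigma$, and $\Lk_{A_\sigma}(v) = L_\sigma$ is a proper subcomplex of $L = \Lk_B(v)$, so the connected sum is not defined as written; the correct decomposition is $(A\#_v B)_\sigma = A_\sigma \#_v B_{\sigma\cap L}$, glued along $L_\sigma$. The only effect is that Lemma \ref{lem: Disconnected splicing} may then produce the proper face $\sigma\cap L$ as a cut simplex of $B$, a possibility not literally listed in (i)--(iii). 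The ``in particular'' clause --- the only part of the lemma used downstream --- is unaffected.
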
 
\begin{proof}
Let $C=A\#_v B$ for notational convenience.  The hypothesis that $\sigma$ is a cut simplex of $C$ means that $C_\sigma$ is disconnected. The proof now divides into cases, according to whether $\sigma$ is in $A$ but not in $B$, or $\sigma$ is contained in the canonical copy of the link of $v$.

In the first case, when $\sigma$ is in $A$ but not in $B$, we have
\[
C_\sigma = A_\sigma\#_v B\,,
\]
so Lemma \ref{lem: Disconnected splicing} implies that $v$ is a cut vertex in $B$, as in item (i), or $A_\sigma$ is disconnected, i.e.\ $\sigma$ is a cut simplex in $A$.

In the second case, if $\sigma$ is in the link of $v$ then 
\[
C_\sigma = A_\sigma\#_v B_\sigma\,,
\]
so Lemma \ref{lem: Disconnected splicing} leads to two further subcases: either one of $A_\sigma$ or $B_\sigma$ is disconnected, or $v$ is a cut vertex in both $A_\sigma$ and $B_\sigma$. The former corresponds to item (ii), while the latter case gives item (iii).
\end{proof}

The above results will help us to understand free splittings of $\pi_1(X)$.  More generally, we will need to understand how arbitrary cut sets decompose.

\begin{lemma}\label{lem: Decomposing cut sets}
Suppose that $U$ is a full set of vertices of $A$ and $V$ is a full set of vertices of $B$, both non-adjacent to a common vertex $v$. If $U\cup V$ is a cut set of $A\#_v B$ then: either
\begin{enumerate}[(i)]
\item $V$ is a cut set in $B$; or
\item $U\cup \{v\}$ is a cut set in $A$.
\end{enumerate}
\end{lemma}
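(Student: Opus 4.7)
The plan is to reduce the statement to a single application of Lemma \ref{lem: Disconnected splicing}. The core observation will be that deleting the open star of $U \cup V$ from $C := A \#_v B$ produces itself a connected sum, namely
\[
C_{U \cup V} = A_U \#_v B_V.
\]
Once this identity is in hand, the lemma follows at once: assuming case (i) fails, $B_V$ is connected while $C_{U \cup V}$ is disconnected, so Lemma \ref{lem: Disconnected splicing} applied to the pair $(A_U, B_V)$ at $v$ yields that $v$ is a cut vertex of $A_U$. Unpacking the definitions, $(A_U)_v = A_{U \cup \{v\}}$ is disconnected, i.e.\ $U \cup \{v\}$ is a cut set of $A$, which is case (ii).

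The main work lies in verifying the displayed identity. Because every vertex of $U$ is non-adjacent to $v$, no simplex of $A$ contains both $v$ and a vertex of $U$; hence $\St_A(U) \subseteq A_v$, the vertex $v$ survives in $A_U$, and $\Lk_{A_U}(v)$ agrees with $L := \Lk_A(v) = \Lk_B(v)$. The analogous statements hold on the $B$-side. Turning to $C$, the open stars $\St_C(U)$ and $\St_C(V)$ lie on opposite sides of the decomposition $C = A_v \cup_L B_v$ and reduce to $\St_A(U)$ and $\St_B(V)$ respectively. Crucially, $L$ itself is disjoint from these stars, since vertices of $L$ are adjacent to $v$ whereas vertices of $U \cup V$ are not. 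Putting these observations together gives $C_{U \cup V} = A_{U \cup \{v\}} \cup_L B_{V \cup \{v\}} = A_U \#_v B_V$, where the second equality uses that $L = \Lk_{A_U}(v) = \Lk_{B_V}(v)$ remains a common link.

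The main obstacle is this bookkeeping — one must track carefully which simplices of $C$ are removed and confirm that the surviving gluing really is along the full link $L$, rather than some proper subcomplex of $L$. Once that identification is established, the conclusion is a direct invocation of Lemma \ref{lem: Disconnected splicing}, with the asymmetry of that lemma (requiring one side to be connected) matching the asymmetry in the two alternatives (i) and (ii).
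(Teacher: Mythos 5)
Your proposal is correct and follows essentially the same approach as the paper's proof: establish the identity $C_{U\cup V} = A_U\#_v B_V$ (which the paper states in one line, justified by the non-adjacency of $v$ to $U\cup V$) and then apply Lemma \ref{lem: Disconnected splicing}. The additional bookkeeping you carry out to verify the identity — checking that the link $L$ survives intact and that the gluing is really along all of $L$ — is accurate and makes explicit the step the paper leaves to the reader.
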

\begin{proof}
Again, let $C=A\#_v B$ for notational convenience. Because $v$ is adjacent to neither $U$ nor $V$,
\[
C_{U\cup V}= A_U\#_v B_V
\]
which is disconnected by hypothesis. Therefore, by Lemma \ref{lem: Disconnected splicing}, either $B_V$ is disconnected, giving item (i), or $v$ is a cut vertex in $A_U$, giving item (ii).
\end{proof}

\subsection{Cuts and the generalised Whitehead's lemma}\label{sec: Cuts and Whitehead's lemma}\

In this section, we combine the results developed so far to prove a sufficient condition for the fundamental group of a compact, non-positively curved cube complex $X$ to be one-ended. These conditions are phrased in terms of \emph{$k$-cuts}.

\begin{definition}[$k$-cuts]\label{def: k-cuts}
Consider a finite CAT(0) cube complex $Y$ equipped with an immersion $Y\to X$. If $\Wh_X(Y)$ has a finite cut set then $Y$ is said to be a \emph{cut}. More precisely, if $\Wh_X(Y)$ is $(k-1)$-vertex-connected but not $k$-vertex-connected -- i.e.\ if the smallest cut set for $\Wh_X(Y)$ has cardinality $k$ -- then $Y$ is said to be a \emph{$k$-cut}. If $\{H_1,\ldots,H_k\}$ is a minimal cut set for a $k$-cut $Y$ then we say that $Y$ is a cut \emph{between $H_1,\ldots,H_k$}.
\end{definition}

Our analysis of splittings of fundamental groups of non-positively curved cube complexes will rest on understanding cuts. We start with 0-cuts, which characterise free splittings. The results of \S\ref{sec: Splicing cut-set lemmas} explain how 0-cuts decompose.

\begin{lemma}\label{lem: Producing 0-cuts}
Consider a finite CAT(0) cube complex $Y$ equipped with a locally isometric immersion $Y\to X$, and let $Y=Y_1\cup_H Y_2$ for some hyperplane $H$ of $Y$. If $Y$ is a 0-cut then either one of $Y_1$ or $Y_2$ is a 0-cut, or both $Y_1$ and $Y_2$ are 1-cuts.
\end{lemma}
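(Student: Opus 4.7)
The plan is to reduce the statement directly to the connected-sum decomposition provided by Proposition \ref{prop: Constructing Whitehead complexes by splicing}, after which everything follows from Lemma \ref{lem: Disconnected splicing}. Write $W=\Wh_X(Y)$, $W_1=\Wh_X(Y_1)$ and $W_2=\Wh_X(Y_2)$. The hypothesis that $Y$ is a $0$-cut translates (via Definitions \ref{def: Cut sets and connectivity} and \ref{def: k-cuts}) to the statement that $W$ is disconnected, and by Proposition \ref{prop: Constructing Whitehead complexes by splicing} we have $W\cong W_1\#_H W_2$.

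The next step is a simple case split. If $W_1$ is disconnected then $Y_1$ is itself a $0$-cut, and symmetrically for $W_2$, which gives the first alternative of the conclusion. Otherwise both $W_1$ and $W_2$ are connected, and this is the case in which I would apply Lemma \ref{lem: Disconnected splicing} twice: once with $A=W_1$ and $B=W_2$ (using that $B$ is connected while $A\#_HB$ is disconnected) to conclude that $H$ is a cut vertex of $W_1$, and a second time with the roles reversed to conclude that $H$ is a cut vertex of $W_2$. Since $W_1$ and $W_2$ are connected and each has a cut vertex, each has minimal cut set of cardinality exactly $1$, so both $Y_1$ and $Y_2$ are $1$-cuts.

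No real obstacle is anticipated: the content has all been packaged into Proposition \ref{prop: Constructing Whitehead complexes by splicing} and Lemma \ref{lem: Disconnected splicing}, so the proof amounts to recognising which hypotheses of those results are available on each side of the connected sum. The only mild subtlety is that one should be explicit about the fact that $H$ is a bounding hyperplane of both $Y_1$ and $Y_2$ (so it is a vertex common to $W_1$ and $W_2$ along which the connected sum is formed), a point already built into the conventions introduced just before Proposition \ref{prop: Constructing Whitehead complexes by splicing}. No other tools are needed.
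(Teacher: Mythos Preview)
Your proposal is correct and follows essentially the same route as the paper: both use Proposition~\ref{prop: Constructing Whitehead complexes by splicing} to obtain $\Wh_X(Y)\cong\Wh_X(Y_1)\#_H\Wh_X(Y_2)$ and then invoke Lemma~\ref{lem: Disconnected splicing}. The paper's proof is simply terser, leaving the case split (one of the $W_i$ disconnected versus both connected) implicit, whereas you spell it out.
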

\begin{proof}
The immersion lifts to an embedding of $Y$ as a convex subcomplex of the universal cover $\univ{X}$, and the hyperplane $H$ of $Y$ then extends uniquely to a hyperplane of $\univ{X}$, still denoted by $H$. Proposition \ref{prop: Constructing Whitehead complexes by splicing} implies that 
\[
\Wh_X(Y)\cong \Wh_X(Y_1)\#_H \Wh_X(Y_2)\,,
\]
so the result follows from Lemma \ref{lem: Disconnected splicing}.
\end{proof}

To iterate the argument, we also need to analyse cut simplices.

\begin{lemma}\label{lem: Producing cut simplices}
Consider a finite CAT(0) cube complex equipped with a locally isometric immersion $Y\to X$, and let $Y=Y_1\cup_H Y_2$ for some hyperplane $H$ of $Y$. If $\Wh_X(Y)$ has a cut simplex, then one of $\Wh_X(Y_1)$ or $\Wh_X(Y_2)$ has a cut simplex.
\end{lemma}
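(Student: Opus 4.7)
The strategy is a direct reduction to the combinatorial analysis of connect sums developed in Section \ref{sec: Splicing cut-set lemmas}, following exactly the pattern already used for Lemma \ref{lem: Producing 0-cuts}.

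Concretely, the locally isometric immersion $Y \to X$ lifts to a convex embedding $Y \hookrightarrow \univ{X}$, and the hyperplane $H$ of $Y$ extends uniquely to a hyperplane of $\univ{X}$ (still denoted $H$), which is then a bounding hyperplane of both $Y_1$ and $Y_2$. Proposition \ref{prop: Constructing Whitehead complexes by splicing} then yields the identification
\[
\Wh_X(Y) \cong \Wh_X(Y_1) \#_H \Wh_X(Y_2),
\]
so the geometric question is translated into a purely combinatorial one about cut simplices in a connect sum of simplicial complexes.

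The conclusion is now precisely the final ``in particular'' clause of Lemma \ref{lem: Cut simplices from splicing}: whenever $A \#_v B$ has a cut simplex, either $A$ or $B$ must itself contain a cut simplex. Applying this with $A = \Wh_X(Y_1)$, $B = \Wh_X(Y_2)$ and $v = H$ delivers the lemma.

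The only point requiring care---and the step I would expect to be the main (minor) obstacle---is that Lemma \ref{lem: Cut simplices from splicing} is stated under the hypothesis that both summands are connected. If one of the $\Wh_X(Y_i)$ is disconnected, one must verify the conclusion separately; this reduces straightforwardly to the connected case by restricting attention to the component of $\Wh_X(Y_i)$ containing $H$, since the complementary components of $\Wh_X(Y_i)$ appear verbatim as complementary components of $\Wh_X(Y)$ and so cannot account for a newly produced cut simplex unless they already provide one in $\Wh_X(Y_i)$ itself.
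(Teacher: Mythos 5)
Your proof matches the paper's, which simply follows the pattern of Lemma \ref{lem: Producing 0-cuts} by combining Proposition \ref{prop: Constructing Whitehead complexes by splicing} with the final clause of Lemma \ref{lem: Cut simplices from splicing}. Your closing paragraph flags a genuine subtlety---Lemma \ref{lem: Cut simplices from splicing} assumes both summands are connected---but the paper's one-line proof does not address this either, and in every application of the present lemma the complexes $\Wh_X(Y_i)$ are already known to be connected (by induction in the proof of Theorem \ref{thm: Generalised Whitehead's lemma}, and by the absence of $0$-cuts elsewhere), so the concern is moot in practice.
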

\begin{proof}
This follows in the same way as Lemma \ref{lem: Producing 0-cuts}, using Proposition \ref{prop: Constructing Whitehead complexes by splicing} and Lemma \ref{lem: Cut simplices from splicing}.
\end{proof}

We now have all the tools needed to prove Theorem \ref{thm: Generalised Whitehead's lemma}, which we think of as a version of Whitehead's lemma for non-positively curved cube complexes.

\begin{proof}[Proof of Theorem \ref{thm: Generalised Whitehead's lemma}]
Let $Y$ be a CAT(0) cube complex, locally isometrically immersed in $X$. We prove that $\Wh_X(Y)$ is connected without cut simplices, by induction on the number of vertices of $Y$.

In the base case, $Y$ is a single vertex $y$, so $\Wh_X(Y)\cong\Lk_X(y)$ by Example \ref{eg: Whitehead graphs of vertices are links} and there is nothing to prove. If $Y$ has more than one vertex, we may choose a (necessarily separating) hyperplane $H$ of $Y$ and write $Y=Y_1\cup_H Y_2$. By the inductive hypothesis, both $\Wh_X(Y_1)$ and $\Wh_X(Y_2)$ are connected without cut simplices, and therefore so is $\Wh_X(Y)$ by Lemma \ref{lem: Producing cut simplices}.

In conclusion, $X$ has no 0-cuts, so Proposition \ref{prop: Detecting free splittings using Whitehead graphs} implies that $\pi_1(X)$ has at most one end.
\end{proof}

Theorem \ref{thm: Generalised Whitehead's lemma} provides a convenient certificate that $\pi_1(X)$ does not split freely.  However, the sufficient condition that it provides is not necessary.

\begin{example}\label{eg: Stupid example}
Let $X_0$ be the standard 2-torus, seen as a non-positively curved square complex with a single vertex $x_0$. Let $X=X_0\vee_{x_0} I$, where $I$ is a unit interval.  The link $\Lk_X(x_0)$ is the disjoint union of a 4-cycle and a point, and in particular is disconnected. However,  $\univ{X}$ is clearly one-ended.
\end{example}

Examples like \ref{eg: Stupid example} can be handled by collapsing free edges and free faces. (Free faces manifest themselves as vertices of valence 1 in the link of a vertex.) However, there are also non-positively curved square complexes $X$, with $\univ{X}$ one-ended, in which the link of a vertex is connected without valence-1 vertices, but contains a cut vertex.

The following example is based on \cite[Example 2.1]{kim_polygonal_2012}.

\begin{example}\label{eg: One-ended complex with cut vertices in the link}
Consider the rank-two free group $F=\langle a,b\rangle$, thought of as the fundamental group of the wedge of two circles $\Gamma=S^1\vee S^1$. Consider the element $w=abab^2ab^3$ of $F$ and realise the corresponding cyclic word by an immersion of a circle $\gamma:S^1\to \Gamma$. Let $X$ be the non-positively curved square complex obtained by gluing either end of a cylinder $S^1\times [0,1]$ to two copies of $\Gamma$, using $\gamma$ as the attaching map at each end. The links of the two vertices of $X$ are both isomorphic to the first subdivision of the Whitehead graph of $w$, which is connected, without valence-1 vertices, but has a cut vertex.

The fundamental group of $X$ is the double
\[
\pi_1(X)=F*_{\langle w\rangle} F
\]
by the Seifert--van Kampen theorem. Applying the automorphism $\phi$ of $F_2$ setting $\phi(a)=ab^{-2}$ and $\phi(b)=b$ sends $w\mapsto w'=ab^{-1}a^2b$.  We may realise $w'$ by an immersion $\gamma':S^1\to \Gamma$ and glue along a cylinder as before to obtain a new square complex $X'$ with $\pi_1(X')\cong\pi_1(X)$. However, the links of the two vertices of $X'$ are isomorphic to the first subdivision of the Whitehead graph of $w'$, which consists of two triangles glued along an edge, and in particular is connected with no cut vertices. Therefore, by Theorem \ref{thm: Generalised Whitehead's lemma}, $\pi(X)\cong\pi_1(X')$ is one-ended, even though the links of $X$ have cut vertices.
\end{example}

In the two-dimensional case, a square complex $X$ can always be modified so that the sufficient condition of Theorem \ref{thm: Generalised Whitehead's lemma} can be applied. A map of square complexes $\phi:X'\to X$ is called an \emph{unfolding of $X$} if (i) the restriction of $\phi$ to 1-skeleta is a combinatorial homotopy equivalence of graphs; and (ii) $\phi$ is a homeomorphism on the complements of the 1-skeleta. 

\begin{proposition}\label{prop: Grushko decomposition of a square complex}
Let $X$ be a compact, non-positively curved square complex. There is an unfolding $\phi:X'\to X$ such that
\[
X'=\Gamma\vee X_1\vee \ldots \vee X_m\vee S_1\vee\ldots \vee S_n
\]
is also a compact, non-positively curved square complex and, furthermore:
\begin{enumerate}[(i)]
\item $\Gamma$ is a graph;
\item each $X_i$ has the property that every link is connected without cut vertices or cut edges;
\item each $S_j$ is a square.
\end{enumerate}
\end{proposition}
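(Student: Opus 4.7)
I would prove this by induction on a complexity measure of the vertex link structure of $X$, in the spirit of the Whitehead algorithm. Recall that for a non-positively curved square complex, each vertex link $\Lk_X(v)$ is a triangle-free simplicial graph whose vertices are the edges of $X$ incident to $v$ and whose edges are the corners of squares of $X$ incident to $v$. Define $c(X)$ to be the sum, over all vertices $v$ of $X$, of $\beta_0(\Lk_X(v)) - 1$ plus the number of cut vertices and the number of cut edges of $\Lk_X(v)$. In the base case $c(X) = 0$, every vertex link is connected and has no cut vertices or cut edges; either $X$ has no squares, in which case $X = \Gamma$, or $X$ is itself a single irreducible piece $X_1$, and in both cases we take $X' = X$.

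When $c(X) > 0$, I would construct an unfolding $\phi\colon X' \to X$ with $c(X') < c(X)$ via one of three local Whitehead-type moves at an offending vertex $v$. (A) If $\Lk_X(v)$ is disconnected with components $L_1, \ldots, L_k$, split $v$ into vertices $v_1, \ldots, v_k$ joined by a tree of bridge edges, reassigning each square so that the corner in $L_i$ is placed at $v_i$; the tree of bridge edges collapses under $\phi$. (B) If $\Lk_X(v)$ has a cut vertex $u$ (corresponding to an edge $e = vw$), apply a Whitehead-type move that splits $v$ into copies reflecting the components of $\Lk_X(v) \setminus u$ and re-routes the boundaries of the affected squares through a newly added tree, while keeping the 1-skeleton homotopy equivalent via the tree's collapse under $\phi$. (C) If $\Lk_X(v)$ has a cut edge corresponding to a corner of a square $Q$, split $v$ to isolate that corner of $Q$; iterating over $Q$'s vertices detaches $Q$ as a stand-alone wedge summand $S_j$. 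I would invoke Proposition~\ref{prop: Constructing Whitehead complexes by splicing} together with Lemmas~\ref{lem: Cut simplices from splicing} and~\ref{lem: Decomposing cut sets} to verify that each move strictly reduces $c$ while not re-introducing cut structure elsewhere, and I would check that the link condition for non-positive curvature (triangle-freeness) is preserved throughout, since the moves only redistribute corners and add isolated or pendant link-vertices.

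Once no further moves apply, every vertex link of $X'$ away from the (unique) remaining disconnected-link vertex $v_0$ is connected without cut vertices or cut edges, and $X'$ decomposes at $v_0$ as the desired wedge $\Gamma \vee X_1 \vee \cdots \vee X_m \vee S_1 \vee \cdots \vee S_n$, with $\Gamma$ absorbing all bridge edges and all edges carrying no incident square corners. The main obstacle is the rigorous execution of move (B): a naive duplication of the edge $e$ into multiple parallel copies would fail to give a combinatorial homotopy equivalence on 1-skeleta, since parallel edges introduce cycles. The correct construction must introduce only a contractible tree's worth of new 1-skeleton while redistributing the incident squares in a compatible way, analogous to a Whitehead automorphism in the free-group setting. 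Setting this up in full generality---and confirming via the connected-sum decomposition of Whitehead complexes that $c$ strictly decreases---is the most delicate part of the argument.
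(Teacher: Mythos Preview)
The paper does not construct the unfolding directly. It cites \cite[Lemma 4.6]{wilton_rational_2022} (and \cite[\S3]{louder_uniform_2024}) for the existence of $X'$ with the stated wedge decomposition, and the only argument actually given in the paper is that $X'$ is non-positively curved: since $\phi$ is a homeomorphism off the 1-skeleton, the induced map on links is injective on edges and hence an immersion, so every cycle in a link of $X'$ immerses into a link of $X$ and therefore has angular length at least $2\pi$.

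Your proposal attempts to prove the cited lemma from scratch, and as written it has genuine gaps. First, your complexity $c$ does not decrease under move (A): splitting $v$ into $v_1,\ldots,v_k$ joined by a tree contributes $\sum_i d_i = 2(k-1)$ new isolated link-vertices at the $v_i$, so the $\sum(\beta_0-1)$ term rises from $k-1$ to $2(k-1)$. Second, your appeal to Proposition~\ref{prop: Constructing Whitehead complexes by splicing} and Lemmas~\ref{lem: Cut simplices from splicing} and~\ref{lem: Decomposing cut sets} is misplaced: those results decompose $\Wh_X(Y)$ when a convex $Y\subseteq\univ{X}$ is cut along one of its own hyperplanes, whereas your moves alter the base complex $X$ and involve no such cutting; a single vertex has no hyperplane to cut along, so the connected-sum machinery says nothing about how links change under unfolding. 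Third, you yourself flag move (B) as incomplete, and arranging it so that the 1-skeleton changes only by a combinatorial homotopy equivalence is exactly the content the paper outsources to the citation. Finally, your endpoint---a wedge with one ``remaining disconnected-link vertex $v_0$''---is inconsistent with your stated base case $c=0$, since the wedge point has disconnected link.
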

\begin{proof}
The existence of $X'$ is \cite[Lemma 4.6]{wilton_rational_2024} stated for square complexes (see also \cite[\S3]{louder_uniform_2024}). The idea of the proof is an algorithmic unfolding procedure: one simply unfolds an edge of the 1-skeleton whenever the link of a vertex contains a cut vertex.

It remains to prove that $X'$ is non-positively curved. To do this, it suffices to check Gromov's link condition, that any cycle $\gamma$ in a link of a vertex of $X'$ should have length at least $2\pi$ in the angular metric \cite[Lemma II.5.6]{bridson_metric_1999}. Because $\phi$ is a homeomorphism on the complement of the 1-skeleton, the induced maps on links are injective on edges and hence immersions. Thus, $\gamma$ is also an immersed cycle in a link of $X$, so has length at least $2\pi$ by the link condition for $X$.
\end{proof}

The square complexes $X_i$ provided by the proposition satisfy the hypotheses of Theorem \ref{thm: Generalised Whitehead's lemma}. Thus, the wedge decomposition $X'$ actually gives the Grushko decomposition of $X$:
\[
\pi_1(X)\cong \pi_1(\Gamma)*\pi_1(X_1)*\ldots *\pi_1(X_m)\,,
\]
and each $\pi_1(X_i)$ is non-trivial and freely indecomposable.  Since the proof of Proposition \ref{prop: Grushko decomposition of a square complex} is algorithmic, Theorem \ref{thm: Generalised Whitehead's lemma} and the proposition together provide an effective, geometric method to determine the Grushko decomposition of the fundamental group of a non-positively curved square complex.

\subsection{Shepherd's theorem and \texorpdfstring{$1$}{1}-cuts}

The results of \S\ref{sec: Cuts and Whitehead's lemma} show that 0-cuts detect when an essential cube complex has more than one end, but they also illustrate the fact that the existence of 1-cuts makes it difficult to determine whether or not $\pi_1(X)$ splits. Therefore, for one-ended $\pi_1(X)$, we would like to be able to choose $X$ to have no 1-cuts.  In the 2-dimensional case, this can be done by \emph{unfolding} the 1-skeleton as in Proposition \ref{prop: Grushko decomposition of a square complex}. To handle the general case,  however, we can use a recent deep theorem of Shepherd.

A \emph{quarter-space} of a CAT(0) cube complex is the intersection of two transverse half-spaces. A quarter-space is said to be \emph{deep} if it is not contained in a bounded neighbourhood of the intersection of its two defining hyperplanes. The next result is  \cite[Theorem 1.5]{shepherd_semistability_2023}.

\begin{theorem}[Shepherd's theorem]\label{thm: Shepherd's theorem}
If $G$ is one-ended and the fundamental group of a non-positively curved cube complex, then $G=\pi_1(X)$ for an essential non-positively curved cube complex $X$, such that the universal cover $\univ{X}$ has the property that all half-spaces of $\univ{X}$ are one-ended and all quarter-spaces are deep.
\end{theorem}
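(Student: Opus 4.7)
The plan is to begin with any compact non-positively curved cube complex $X_0$ with $\pi_1(X_0) \cong G$ and modify it in stages to enforce each of the three properties. Stage 1 applies Lemma \ref{lem: Essential complexes} directly: Caprace--Sageev's result lets us collapse inessential hyperplanes and pass to an essential convex subcomplex, so we may assume $X_0$ is essential at the outset.

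For Stage 2 (one-endedness of half-spaces) the plan is to exploit the tension between the hypothesis that $G$ is one-ended and the existence of a half-space $\tilde U$ with more than one end. If such a $\tilde U$ exists, then by a Stallings-type analysis of the action of $\Stab(\tilde U)$ on $\tilde U$ (together with Theorem \ref{thm: Ends and splittings} applied to a quotient of $\tilde U$ by a suitable finite-index subgroup of its stabiliser), one expects to extract either a free splitting of a natural vertex group of $X$, or geometric data suggesting that $\tilde X$ contains a ``redundant strip.'' In the former case one would first decompose $X$ along Grushko-type pieces; in the latter case the aim is to perform a cubical modification (an equivariant collapse of the strip, or an unfolding in the sense of Proposition \ref{prop: Grushko decomposition of a square complex} applied one dimension up) that strictly reduces a complexity invariant of $X$ without altering $\pi_1$.

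For Stage 3 (depth of quarter-spaces) the plan is similar in spirit. If a quarter-space $\tilde U_1 \cap \tilde U_2$ lies in a bounded neighbourhood of $\tilde H_1 \cap \tilde H_2$, then the two hyperplanes $\tilde H_1, \tilde H_2$ behave asymptotically like a single hyperplane, and the strip between their bounding parts can be folded or collapsed equivariantly. Again, the key is to realise this collapse as a cubical modification preserving both non-positive curvature (via Gromov's link condition, checked as in Proposition \ref{prop: Grushko decomposition of a square complex}) and the fundamental group, while strictly reducing a well-chosen complexity.

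The main obstacle is twofold. First, one must identify a complexity on compact NPC cube complexes that decreases under both types of modification and is bounded below, so the process terminates; a naive count of cubes need not suffice, since equivariant modifications may interact badly in intermediate dimensions. Second, one must verify that the stage-2 and stage-3 modifications can be carried out compatibly: fixing a shallow quarter-space must not reintroduce inessential hyperplanes or create new half-spaces with bad end structure. Handling this interaction is what makes the theorem genuinely deep, and the argument in \cite{shepherd_semistability_2023} is the natural reference for the bookkeeping.
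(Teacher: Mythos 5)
This statement is not proved in the paper under review: it is cited verbatim as \cite[Theorem 1.5]{shepherd_semistability_2023}, with no argument supplied. So there is no ``paper's own proof'' to compare against, and the correct treatment in the present context is simply to invoke Shepherd's result.

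Your proposal is a sketch of a strategy for proving Shepherd's theorem from scratch, and it contains several genuine gaps that you yourself partially flag. Concretely: (1) you never define the complexity invariant that is supposed to decrease under the two types of modification, and a naive cube count is, as you note, unlikely to work, so there is no termination argument at all; (2) the ``Stallings-type analysis'' in Stage 2 that is supposed to produce either a free splitting or a ``redundant strip'' is not carried out, and it is not clear that these two alternatives are exhaustive (one must rule out, for instance, the stabiliser of the half-space being trivial or non-finitely-generated, in which case quotienting by a finite-index subgroup and applying Theorem \ref{thm: Ends and splittings} is not available); (3) the ``cubical modification'' in Stages 2 and 3 is never defined in enough detail to check that it preserves non-positive curvature and $\pi_1$ — the unfolding of Proposition \ref{prop: Grushko decomposition of a square complex} is specific to square complexes and does not obviously generalise ``one dimension up''; and (4) the compatibility of the two stages, which you correctly identify as the crux, is left entirely open. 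In short, what you have written is a plausible research plan, not a proof, and for the purposes of this paper the appropriate move is to cite Shepherd rather than attempt to reprove his theorem.
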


The following lemma connects Shepherd's theorem to 1-cuts.

\begin{lemma}\label{lem: Half spaces and 1-cuts}
Let $\univ{X}$ be a one-ended, essential CAT(0) cube complex in which all quarter-spaces are deep. If $\univ{X}$ has a 1-cut, then some half-space of $\univ{X}$ has more than one end.
\end{lemma}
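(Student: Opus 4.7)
The plan is to identify the Whitehead complex of $Y$ viewed inside the half-space of $H$ on the $Y$-side with the deletion of the vertex $H$ from $\Wh_X(Y)$, and then invoke Proposition~\ref{prop: Detecting free splittings using Whitehead graphs} applied to that half-space.

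Let $Y$ be a 1-cut with cut vertex $H$. Since $H$ bounds $Y$, no hyperplane separates $H$ from $Y$, so $Y$ lies entirely in one of the two (closed) half-spaces of $H$; call it $\univ{U}$. My first task is to verify that $\univ{U}$ is a freely essential CAT(0) cube complex. Convexity of $\univ{U}$ in $\univ{X}$ gives the CAT(0) cube structure. Every hyperplane of $\univ{U}$ has the form $K \cap \univ{U}$ for a hyperplane $K \neq H$ of $\univ{X}$, and its two half-spaces inside $\univ{U}$ are precisely the intersections of $\univ{U}$ with the two half-spaces of $K$ in $\univ{X}$ -- that is, quarter-spaces of $\univ{X}$. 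The hypothesis that all quarter-spaces are deep then rules out any bounded half-space in $\univ{U}$, as required.

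The central claim is the simplicial isomorphism
\[
\Wh_{\univ{U}}(Y) \;\cong\; \Wh_X(Y) \setminus \St(H).
\]
On vertices, bounding hyperplanes of $Y$ in $\univ{U}$ biject with bounding hyperplanes of $Y$ in $\univ{X}$ other than $H$, because $H$ becomes the boundary of $\univ{U}$ rather than a hyperplane. On higher simplices, I would check that a collection of bounding hyperplanes $K_1, \ldots, K_n$ of $Y$ (none equal to $H$) has nonempty common intersection in $\univ{X}$ if and only if the restrictions $K_i \cap \univ{U}$ do so in $\univ{U}$; this follows because any such common intersection is a bounding codimension-$n$ hyperplane of $Y$, and its adjacency to $Y \subseteq \univ{U}$ forces it to meet $\univ{U}$. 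The cut vertex hypothesis says exactly that $\Wh_{\univ{U}}(Y)$ is nonempty and disconnected, so Proposition~\ref{prop: Detecting free splittings using Whitehead graphs}, applied to the freely essential CAT(0) cube complex $\univ{U}$ and the compact convex subcomplex $Y$, yields that $\univ{U}$ has more than one end.

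The main delicate point I anticipate is the verification of this simplicial identification, particularly for bounding hyperplanes of $Y$ that cross $H$: their restrictions to $\univ{U}$ are ``half-hyperplanes'' meeting the boundary $\partial \univ{U} = H$, and one needs to confirm both that these are genuine hyperplanes of $\univ{U}$ and that their intersection pattern in $\univ{U}$ faithfully records the intersection pattern of the original hyperplanes in $\univ{X}$.
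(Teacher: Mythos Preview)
Your approach is exactly the paper's: identify the half-space $\univ{U}$ containing $Y$, show it is freely essential, observe that $\Wh_{\univ{U}}(Y)$ is $\Wh_X(Y)$ with the star of $H$ removed (hence disconnected), and apply Proposition~\ref{prop: Detecting free splittings using Whitehead graphs}. Your discussion of the simplicial identification is in fact more careful than the paper's one-line assertion.

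There is, however, a small gap in your verification that $\univ{U}$ is freely essential. You argue that every half-space of $\univ{U}$ is a quarter-space of $\univ{X}$ and that deepness rules out compactness. But ``deep'' means \emph{not contained in a bounded neighbourhood of $K\cap H$}; when $K$ is disjoint from $H$ this intersection is empty and the condition becomes vacuous, so deepness gives no information about compactness in that case. The paper handles this by splitting into two cases: if $K$ crosses $H$, a compact half-space in $\univ{U}$ is a compact (hence non-deep) quarter-space; if $K$ is disjoint from $H$, then $K$ is already a hyperplane of $\univ{X}$, and one of its half-spaces in $\univ{U}$ is a full half-space of $\univ{X}$, so its compactness would contradict essentiality of $\univ{X}$. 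You need to invoke the essentiality hypothesis explicitly for this second case.
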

\begin{proof}
Let $Y\subseteq \univ{X}$ be a 1-cut, i.e.\ $Y$ is compact and convex and $\Wh_{\univ{X}}(Y)$ has a cut vertex $H$. Let $U$ be the half-space of $H$ that contains $Y$. 

We claim that $U$ is freely essential. Indeed, suppose that some hyperplane $K$ of $U$ had a compact half-space $V$. If $K$ were disjoint from $H$ then $K$ would also be a compact hyperplane in $\univ{X}$, contradicting the hypothesis that $\univ{X}$ is essential and one-ended. If $K$ crossed $H$ then $V$ would be a compact quarter-space in $\univ{X}$, contradicting the hypothesis that all quarter-spaces are deep.

Now, $\Wh_U(Y)$ is obtained from $\Wh_{\univ{X}}(Y)$ by deleting the star of $H$, so is disconnected. Therefore,  $U$ has more than one end by Proposition \ref{prop: Detecting free splittings using Whitehead graphs}, as required.
\end{proof}

Combining Shepherd's theorem with the lemma, we may always assume that one-ended cube complexes have no 1-cuts.

\begin{corollary}\label{cor: No 1-cuts}
If $G$ is the fundamental group of a compact, non-positively curved cube complex and $G$ is one-ended, then $G=\pi_1(X)$, where $X$ is a compact, essential, non-positively curved cube complex without 0-cuts or 1-cuts.
\end{corollary}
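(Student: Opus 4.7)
The plan is to assemble this as an immediate consequence of Shepherd's theorem together with the lemma just proved, plus a reminder that a one-ended fundamental group cannot produce any $0$-cut in an essential cube complex.

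First I would invoke Shepherd's theorem to replace the original cube complex by a compact, essential, non-positively curved cube complex $X$ with $\pi_1(X) \cong G$, such that every half-space of $\univ X$ is one-ended and every quarter-space of $\univ X$ is deep. This already handles essentiality and sets up exactly the hypotheses needed for Lemma \ref{lem: Half spaces and 1-cuts}.

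Next I would rule out $1$-cuts: if $Y \subseteq \univ X$ were a $1$-cut, then Lemma \ref{lem: Half spaces and 1-cuts} would produce a half-space of $\univ X$ with more than one end, contradicting the conclusion of Shepherd's theorem that all half-spaces are one-ended. Hence $X$ has no $1$-cut.

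Finally I would rule out $0$-cuts. Since $G$ is infinite and one-ended, Stallings' theorem (Theorem \ref{thm: Ends and splittings}) tells us $\mathrm{Ends}(\univ X) = 1$. Since $X$ is essential, $\univ X$ is freely essential by Remark \ref{rem: Freely essential remarks}, so Proposition \ref{prop: Detecting free splittings using Whitehead graphs} applies: there is no compact convex $Y \subseteq \univ X$ for which $\Wh_{\univ X}(Y)$ is non-empty and disconnected, i.e.\ $X$ admits no $0$-cut. This gives all the properties claimed.

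There is no real obstacle here; the entire content is already packaged in the preceding theorem, lemma, and proposition, and the proof should amount to a three- or four-sentence citation argument. The only mild subtlety is remembering to check that the replacement $X$ supplied by Shepherd's theorem is essential (so that freely essential holds and Proposition \ref{prop: Detecting free splittings using Whitehead graphs} applies), which is explicitly part of the conclusion of that theorem.
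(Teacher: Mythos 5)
Your proof is correct and follows exactly the route the paper intends: Shepherd's theorem supplies the compact essential $X$ with deep quarter-spaces and one-ended half-spaces, Lemma \ref{lem: Half spaces and 1-cuts} then rules out $1$-cuts, and one-endedness together with Proposition \ref{prop: Detecting free splittings using Whitehead graphs} rules out $0$-cuts. The paper leaves this corollary unproved beyond the one-line remark that it is obtained by "combining Shepherd's theorem with the lemma," so your write-up is simply a fleshed-out version of the same argument.
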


\section{Cyclic splittings}\label{sec: Cyclic splittings}

Having understood the free splittings of $\pi_1(X)$, we may henceforth assume that $\pi_1(X)$ is one-ended. Our next task is to adapt the analysis of the previous section to understand splittings of $\pi_1(X)$ over $\mathbb{Z}$. We restrict attention to the case where $\pi_1(X)$ is hyperbolic and, for brevity, we will say \emph{hyperbolic cube complex} to mean a compact, connected, non-positively curved cube complex $X$ with $\univ{X}$ (and hence $\pi_1(X)$) Gromov-hyperbolic. Let $\delta$ be a hyperbolicity constant for $\univ{X}$.

Cashen--Macura studied line patterns in free groups, i.e.\ relatively hyperbolic structures on free groups with cyclic parabolics \cite{cashen_line_2011}. They explained how to recognise cut pairs in Otal's decomposition space -- i.e.\ the Bowditch boundary -- by analysing cut pairs in generalised Whitehead graphs. Cashen went on to use this analysis to recognise relative cyclic splittings \cite{cashen_splitting_2016}.  Barrett used similar ideas to describe an algorithm that computes the Bowditch JSJ decomposition of any hyperbolic group \cite{barrett_computing_2018}.

Our first task is to adapt their ideas to the setting of hyperbolic cube complexes.

\subsection{Periodic \texorpdfstring{$2$}{2}-cuts}

Suppose that $\pi_1(X)$ is one-ended. If every splitting of $\pi_1(X)$ with cyclic edge groups is trivial, then $\pi_1(X)$ is said to be \emph{cyclically indecomposable}. The main theorem of this section is a combinatorial characterisation of cyclic indecomposibility. Just as we understood free splittings of $\pi_1(X)$ in terms of 0-cuts and 1-cuts of $X$, so cyclic splittings can be understood using 2-cuts.

As we will see, cyclic splittings give rise to 2-cuts, but the converse is not quite true: in order to construct a cyclic splitting from a 2-cut, we also need to consider the combinatorial types of the bounding hyperplane components.

\begin{definition}[Abstract hyperplane component]\label{def: Abstract hyperplane component}
Let $X$ be a compact, non-positively curved cube complex. An \emph{abstract hyperplane component} over $X$ consists of the following data:
\begin{enumerate}[(i)]
\item a compact, convex subcomplex $K$ of a hyperplane $H\subseteq \univ{X}$;
\item a non-trivial class $c_K$ in the reduced 0-cohomology $\tilde{H}^0(\Wh_H(K);\Z/2\Z)$.
\end{enumerate}
Two abstract hyperplane components $K\subseteq H$ and $K'\subseteq H'$ are of \emph{isomorphic type} if there is a deck transformation $\phi\in\pi_1(X)$ such that $\phi(K)=K'$, inducing an isomorphism  $\Wh_H(K)\cong \Wh_{H'}(K')$ under which $c_{K'}$ pulls back to $c_K$.

An \emph{orientation} of an abstract hyperplane component consists of the following extra datum:
\begin{enumerate}[(i)]
\setcounter{enumi}{2}
\item  a choice of one of the two half spaces of the hyperplane $H$.
\end{enumerate}
Two oriented abstract hyperplane components $K\subseteq H$ and $K'\subseteq H'$ are of \emph{opposite type} if they are of isomorphic type and, furthermore, the isomorphism $\phi$ can be chosen to reverse the chosen orientations.
\end{definition}

The reader may find it useful to think of the class $c_K$ as a non-trivial partition of the connected components of $\Wh_H(K)$.

This definition enables us to define a \emph{periodic} 2-cut.

\begin{definition}[Periodic 2-cut]\label{def: Periodic 2-cut}
Let $X$ be a compact, non-positively curved cube complex without 0- or 1-cuts. Consider a 2-cut $Y$ in $X$ between disjoint hyperplanes $H,H'$. Fix a non-trivial class $c_Y\in\tilde{H}^0(\Wh_X(Y)_{\{H,H'\}};\Z/2\Z)$. Define an oriented abstract hyperplane component corresponding to $H$ as follows:
\begin{enumerate}[(i)]
\item $K=H_Y$;
\item\label{item: Reduced cohomology condition} set $c_K$ to be the pullback of $c_Y$ under the inclusion of $\Wh_H(K)\cong\Lk_{\Wh_X(Y)}(H)$ into $\Wh_X(Y)_{\{H,H'\}}$;
\item the orientation is determined by the half-space of $H$ that contains $Y$.
\end{enumerate}
Define an oriented abstract hyperplane component $K'$ corresponding to $H'$ in the same way. The 2-cut $Y$ is said to be \emph{periodic} if the two oriented abstract hyperplane components are of opposite type, for some choice of $c_Y$. In this case, the element $\phi\in\pi_1(X)$ sending $H\to H'$ is called the \emph{associated covering transformation of $Y$}.
\end{definition}

\begin{remark}
Since $X$ has no 1-cuts, $\Wh_X(Y)_{\{H'\}}$ is connected. The Mayer--Vietoris sequence applied to the decomposition
\[
\Wh_X(Y)_{\{H'\}}=\Wh_X(Y)_{\{H,H'\}}\cup_{\Lk(H)}C(\Lk(H))
\]
then implies that the natural map
\[
\tilde{H}^0(\Wh_X(Y)_{\{H,H'\}};\Z/2\Z)\to \tilde{H}^0(\Wh_H(K);\Z/2\Z)
\]
is injective, so $c_K$ is indeed non-trivial, as required.
\end{remark}

We shall see that periodic 2-cuts characterise cyclic splittings. However, first we need to relate cyclic splittings to Whitehead complexes.

\begin{lemma}\label{lem: Hulls and splittings}
Let $X$ be an essential hyperbolic cube complex with $\pi_1(X)$ one-ended. Then $\pi_1(X)$ splits over $\Z$ if and only if some infinite cyclic subgroup acts cocompactly on some isometrically embedded subcomplex $\univ{Z}$ of $\univ{X}$ with $\Wh_X(\univ{Z})$ disconnected.
\end{lemma}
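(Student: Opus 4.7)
The bridge between the two sides of the equivalence is Lemma \ref{lem: Topological type of Whitehead complex}, which identifies $\Wh_X(\univ{Z})$ up to homotopy with the complement $\univ{X}\smallsetminus\univ{Z}$. My plan is therefore to recast both directions as topological separation statements: a cyclic splitting of $\pi_1(X)$ should correspond to the existence of a $C$-cocompact, isometrically embedded $\univ{Z}\subseteq\univ{X}$ such that $\univ{X}\smallsetminus\univ{Z}$ is disconnected.

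For the reverse direction, suppose $C\cong\Z$ acts cocompactly on an isometrically embedded $\univ{Z}\subseteq\univ{X}$ with $\univ{X}\smallsetminus\univ{Z}$ disconnected. First I would observe that $\univ{Z}$ is quasi-isometric to $\Z$, so its setwise stabiliser $S=\Stab_{\pi_1(X)}(\univ{Z})$ is a two-ended subgroup of the torsion-free hyperbolic group $\pi_1(X)$, hence $S\cong\Z$. The compact quotient $\univ{Z}/S$ maps by a local isometry into $X$, hence $\pi_1$-injectively by Lemma \ref{lem: Local isometries are pi1 injective}. The $\pi_1(X)$-translates of $\univ{Z}$ form a $\pi_1(X)$-invariant family of separating walls, each with cyclic stabiliser; dualising to the tree whose vertices are components of $\univ{X}\smallsetminus\bigcup_{g\in\pi_1(X)} g\univ{Z}$ and whose edges are the walls yields, by a standard Bass--Serre argument, a non-trivial action of $\pi_1(X)$ on a tree with edge stabilisers isomorphic to $\Z$, i.e.\ a cyclic splitting.

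For the forward direction, suppose $\pi_1(X)$ splits non-trivially over $C=\langle c\rangle\cong\Z$. Cyclic subgroups of hyperbolic groups are quasiconvex, so by Corollary \ref{cor: CCC vs l1 quasiconvex} the $C$-orbit of a vertex $v\in\univ{X}^{(0)}$ is $\ell^1$-quasiconvex. The splitting furnishes a codimension-one structure on $C$: for some sufficiently large $R$ the complement $\univ{X}\smallsetminus N_R(C\cdot v)$ has at least two unbounded components. I would define $\univ{Z}$ to be the cubical convex hull of $N_R(C\cdot v)\cap\univ{X}^{(0)}$; by Theorem \ref{thm: Haglund's theorem} this remains within bounded distance of $C\cdot v$, hence is still $C$-cocompact. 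Since $\univ{Z}$ contains the separating neighbourhood $N_R(C\cdot v)\cap\univ{X}^{(0)}$, the components of $\univ{X}\smallsetminus\univ{Z}$ refine those of $\univ{X}\smallsetminus N_R(C\cdot v)$, and so $\univ{X}\smallsetminus\univ{Z}$ is disconnected.

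The main obstacle is in the forward direction: one must upgrade the \emph{coarse} codimension-one separation of $\univ{X}$ by a metric neighbourhood of $C\cdot v$ into \emph{topological} separation by an honest convex subcomplex whose stabiliser is cyclic. Haglund's theorem keeps the cubical convex hull within bounded distance of the orbit, but one still needs to verify carefully that swallowing $N_R(C\cdot v)$ into $\univ{Z}$ does not accidentally reconnect components of the complement. The refinement observation --- that components of the complement of a larger set are contained in components of the complement of a smaller one --- is what makes the argument go through, but its clean execution is where the hyperbolicity and cocompactness hypotheses really earn their keep.
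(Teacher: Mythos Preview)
Your forward direction matches the paper's. The paper packages both implications through the algebraic annulus theorem of Dunwoody--Swenson: $\pi_1(X)$ splits over $\Z$ if and only if $e(G,C)>1$ for some infinite cyclic $C$. Given $e(G,C)>1$, an $R$-neighbourhood of an orbit $C*$ separates $\univ{X}$ into at least two unbounded components; take $\univ{Z}$ to be its cubical hull, which is $C$-cocompact by Haglund's theorem. Your worry about the hull ``accidentally reconnecting'' the complement dissolves once you use unboundedness: $\univ{Z}$ lies in a bounded neighbourhood of $C*$, so each unbounded component of $\univ{X}\smallsetminus N_R(C*)$ still meets $\univ{X}\smallsetminus\univ{Z}$, and since components of the latter refine those of the former, distinct unbounded components stay distinct.

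Your reverse direction departs from the paper and has a genuine gap. The paper again routes through relative ends: $\Wh_X(\univ{Z})$ disconnected gives $\univ{X}\smallsetminus\univ{Z}$ disconnected by Lemma~\ref{lem: Topological type of Whitehead complex}; essentialness of $X$ then forces every complementary component to be unbounded away from $\univ{Z}$ (this is exactly where that hypothesis enters, and you never invoke it); hence $e(G,C)>1$, and the annulus theorem supplies the splitting. Your proposed shortcut---dualise the family $\{g\univ{Z}:g\in\pi_1(X)\}$ to a tree whose vertices are complementary components and whose edges are the walls---does not work as written. The translates $g\univ{Z}$ are arbitrary convex subcomplexes, not hyperplanes: distinct translates may intersect, so the incidence structure of walls and complementary regions need not be a tree, or even simply connected. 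And even granting a tree, non-triviality of the action requires the complementary pieces to be deep, which is again the essentialness hypothesis you have omitted. The honest route is through $e(G,C)>1$ and Dunwoody--Swenson.
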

\begin{proof}
By the algebraic annulus theorem of Dunwoody--Swenson \cite{dunwoody_algebraic_2000}, and noting that $\pi_1(X)$ is torsion-free, $\pi_1(X)$ splits over a $\Z$ subgroup if and only if some subgroup $C\cong\Z$ satisfies that the number of relative ends $e(G,C)>1$. Because $\pi_1(X)$ is hyperbolic, $C$ is quasiconvex.

Choosing a base vertex $*$ in $\univ{X}$, the condition that $e(G,C)>1$ implies that, for some $R$, the $R$-neighbourhood of the orbit $C*$ separates $\univ{X}$ into at least two infinite components. Let $\univ{Z}$ be the cubical convex hull of the $R$-neighbourhood of $C*$, which still separates $\univ{X}$ into at least two infinite components. By Haglund's theorem (Theorem \ref{thm: Haglund's theorem}), $C$ acts cocompactly on $\univ{Z}$. Because $\univ{Z}$ separates $\univ{X}$ into two components, $\Wh_X(\univ{Z})$ is disconnected by Lemma \ref{lem: Topological type of Whitehead complex}, as required.

Conversely, suppose that some $C\cong\Z$ acts cocompactly on some convex subcomplex $\univ{Z}$ with  $\Wh_X(\univ{Z})$ disconnected, so $\univ{Z}$  separates $\univ{X}$ into two components by Lemma \ref{lem: Topological type of Whitehead complex}. Either some complementary component remains within a bounded distance of $\univ{Z}$, in which case some bounding hyperplane of $\univ{Z}$ is inessential, which is a contradiction, or all complementary components are unbounded away from $\univ{Z}$, meaning that $e(G,C)>1$, as required.
\end{proof}

In the setting of Lemma \ref{lem: Hulls and splittings}, it will be useful to know that the number of path components of the, \emph{a priori} infinite, complex $\Wh_X(\univ{Z})$ is finite.

\begin{lemma}\label{lem: Finitely many path components}
Let $X$ be a compact, non-positively curved cube complex without 0- or 1-cuts. Suppose that some infinite cyclic subgroup $\langle\phi\rangle$ of $\pi_1(X)$ acts cocompactly on some isometrically embedded subcomplex $\univ{Z}$  of $\univ{X}$. Then the number of path components of $\Wh_X(\univ{Z})$ is finite.
\end{lemma}
\begin{proof}
The compact quotient $Z=\langle\phi\rangle\backslash\univ{Z}$ has fundamental group $\Z\cong\langle\phi\rangle$, and therefore $H^1(Z;\Z)\cong\Z$. Since the first cohomology is generated by cocycles dual to hyperplanes, some hyperplane $K$ of $Z$ is crossed by any loop representing the generator $\phi$ of $\Z$. Since $\pi_1(K)$ is contained in the kernel of the dual map
\[
\Z\cong\pi_1(Z)\to\Z\,,
\]
it follows that $K$ is simply connected, so $K$ lifts to a hyperplane (still denoted by $K$) of $\univ{Z}$. By \cite[Lemma 2.3]{caprace_rank_2011}, after replacing $\phi$ by a further proper power we may assume that $\phi(K)$ is disjoint from $K$.

The hyperplane $K$ of $\univ{Z}$ extends to some hyperplane $H$ of $\univ{X}$. Cutting $\univ{Z}$ along the $\langle\phi\rangle$-orbit of $H$ decomposes it as an infinite gluing:
\[
\univ{Z}=\cdots \cup_{\phi^{-1}(H)} \phi^{-1}(Y)\cup_H Y\cup_{\phi(H)} \phi(Y)\cup_{\phi^2(H)}\cdots
\]
where $Y$ is the compact component resulting from cutting $\univ{Z}$ along $H$ and $\phi(H)$. It follows from this that the Whitehead complex $\Wh_X(\univ{Z})$ can be written as an ascending union
\[
\Wh_X(\univ{Z})=\bigcup_{n\geq 0}\Wh_X(Z_n)_{\{\phi^{-n}(H),\phi^n(H)\}}
\]
where 
\[
Z_n= \phi^{-n}(Y)\cup_{\phi^{1-n}(H)}\cdots \cup_{\phi^{n-1}(H)}\phi^{n-1}(Y)\,.
\]

In particular,
\[
\Wh_X(Z_n)= \Wh_X(Y)\#\Wh_X(Z_{n-1})\#\Wh_X(Y)
\]
by Proposition \ref{prop: Constructing Whitehead complexes by splicing}. Since $X$ has no 0- or 1-cuts, $\Wh_X(Y)$ is 1-vertex-connected, so
\[
b_0(\Wh_X(Z_n))=b_0(\Wh_X(Z_{n-1}))=b_0(\Wh_X(Z_0))
\]
by Lemma \ref{lem: Connected sum and components} and induction.

Recall that $K=H\cap \univ{Z}=H_Y$. Let $L=\Wh_H(K)\cong\Lk_{\Wh_X(Y)}(H)$ (by Lemma \ref{lem: Links in Whitehead complexes}), which is itself compact. Mayer--Vietoris applied to the decomposition
\[
\Wh_X(Z_n)=C(L)\cup_L \Wh_X(Z_n)_{\{\phi^{-n}(H),\phi^{n}(H)\}}\cup_L C(L)
\]
(where, as usual, $C(L)$ denotes the cone on $L$) gives the exact sequence
\[
H_0(L)\oplus H_0(L)\to H_0(\Wh_X(Z_n)_{\{\phi^{-n}(H),\phi^{n}(H)\}})\to H_0(\Wh_X(Z_n))\to 0\,.
\]
Thus, the number of connected components is bounded independently of $n$:
\begin{eqnarray*}
b_0(\Wh_X(Z_n)_{\{\phi^{-n}(H),\phi^{n}(H)\}})&\leq& b_0(\Wh_X(Z_n))+2b_0(L)\\
&= &b_0(\Wh_X(Z_0))+2b_0(L)\,.
\end{eqnarray*}
From this, it follows that $\Wh_X(\univ{Z})$ has finitely many connected components, as required.
\end{proof}

It follows from Lemmas \ref{lem: Hulls and splittings} and \ref{lem: Finitely many path components} that periodic 2-cuts characterise cyclic splittings.

\begin{theorem}[Cyclic splittings and periodic 2-cuts]\label{thm: Periodic slices and cyclic splittings}
Let $X$ be an essential hyperbolic cube complex. Suppose that $\pi_1(X)$ is one-ended, and suppose furthermore that $X$ has no 0- or 1-cuts.  Then $\pi_1(X)$ is cyclically decomposable if and only if there is a periodic 2-cut $Y\to X$.
\end{theorem}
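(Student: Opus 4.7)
The plan is to use Lemma \ref{lem: Hulls and splittings} as the bridge between the two directions. On one hand, a periodic 2-cut $Y$ will be assembled, via its $\phi$-translates, into a convex subcomplex $\univ Z$ on which the associated covering transformation $\phi$ acts cocompactly with $\Wh_X(\univ Z)$ disconnected. On the other, a cyclic splitting yields such a $\univ Z$, from which a fundamental ``slab'' cut out by a pair of $\phi$-related hyperplanes provides the desired periodic 2-cut.

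For the direction periodic $\Rightarrow$ cyclically decomposable, I would take a periodic 2-cut $Y$ between bounding hyperplanes $H, H'$ with associated covering transformation $\phi$ satisfying $\phi(H) = H'$, and set
\[
\univ Z := \bigcup_{n \in \Z} \phi^n(Y).
\]
Consecutive translates $\phi^n Y$ and $\phi^{n+1}Y$ meet along the common hyperplane $\phi^{n+1}H = \phi^n H'$, and the opposite-orientation condition places them on opposite sides of this seam, so the union is locally convex at every gluing; it is therefore a convex, $\langle \phi \rangle$-cocompact subcomplex. Iterated application of Proposition \ref{prop: Constructing Whitehead complexes by splicing} presents $\Wh_X(\univ Z)$ as an infinite connect sum of copies of $\Wh_X(\phi^n Y)$ along the links of the hyperplanes $\phi^n H$. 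The isomorphic-type condition in Definition \ref{def: Periodic 2-cut} ensures that the partition of $\Lk_{\Wh_X(\phi^n Y)}(\phi^n H')$ induced by $\Wh_X(\phi^n Y)_{\{\phi^n H, \phi^n H'\}}$ is identified, under the gluing, with the analogous partition of $\Lk_{\Wh_X(\phi^{n+1}Y)}(\phi^{n+1}H)$. An induction using Lemma \ref{lem: Decomposing cut sets} then shows that removing the stars of all the $\phi^n H$ keeps $\Wh_X(\univ Z)$ disconnected, so $\Wh_X(\univ Z)$ itself is disconnected. Lemma \ref{lem: Hulls and splittings} delivers the splitting over $\langle \phi \rangle \cong \Z$.

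For the direction cyclically decomposable $\Rightarrow$ periodic, Lemma \ref{lem: Hulls and splittings} supplies a convex, $\langle\phi\rangle$-cocompact subcomplex $\univ Z$ with $\Wh_X(\univ Z)$ disconnected. After passing to a power of $\phi$ if necessary, fix a $\phi$-invariant partition $\Wh_X(\univ Z) = A_+ \sqcup A_-$ into non-empty unions of connected components. Since $\pi_1(X)$ is hyperbolic, $\univ Z$ is quasi-isometric to a line and $\phi$ acts on it as a translation. Choose a hyperplane $H$ of $\univ X$ that crosses $\univ Z$ transversely to $\phi$'s axis, so that $H$ and $H' := \phi H$ are disjoint and bound a compact slab of $\univ Z$, and set $Y$ to be the closure of the region of $\univ Z$ strictly between $H$ and $H'$. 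Then $Y$ is compact, convex and a fundamental domain for $\phi$ on $\univ Z$, and its bounding hyperplanes in $\univ X$ are precisely $H$, $H'$ and the bounding hyperplanes of $\univ Z$ adjacent to $Y$ (which inherit the $A_\pm$ partition). Since no bounding hyperplane in $A_+$ crosses one in $A_-$ in $\univ X$, no edge in $\Wh_X(Y)_{\{H,H'\}}$ joins the two parts, so $\{H,H'\}$ is a cut set; it is minimal because $X$ has no 0- or 1-cuts by hypothesis. Periodicity is then verified by noting that $\phi$ sends $H_Y = H \cap \univ Z$ to $H'_Y = H' \cap \univ Z$, carries the induced partition to itself, and sends the half-space of $H$ containing $Y$ to the half-space of $H'$ containing $\phi Y$, which is opposite to the one containing $Y$.

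The main obstacle I anticipate is the bookkeeping in the backward direction: verifying that no extraneous bounding hyperplanes of $Y$ arise from hyperplanes of $\univ X$ that cross $\univ Z$ outside the slab but nevertheless are adjacent to $Y$. This will require choosing $H$ to cross $\univ Z$ ``cleanly'', for instance by demanding that its bounding hyperplane component in $\univ Z$ lie at $\ell^1$-distance at least $2$ from the corresponding component of $\phi^{\pm 1}H$ inside $\univ Z$, and invoking Haglund's theorem (Theorem \ref{thm: Haglund's theorem}) to keep convex hulls inside $\univ Z$. The corresponding subtlety in the forward direction is the propagation of disconnection through the infinite connect sum, for which the finite-step induction sketched above must be organized so that the removed stars at all the $\phi^n H$ are handled simultaneously.
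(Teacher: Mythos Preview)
Your proposal follows the paper's strategy in both directions, via Lemma~\ref{lem: Hulls and splittings}. In the forward direction your citation of Lemma~\ref{lem: Decomposing cut sets} is misplaced (that lemma decomposes cut sets rather than assembling them), but the argument you actually describe---that the pieces $\Wh_X(\phi^n Y)_{\{\phi^n H,\phi^{n+1}H\}}$ are disconnected with compatible partitions on their shared links, so the union $\Wh_X(\univ Z)$ is disconnected---is correct; the paper expresses the same idea via Mayer--Vietoris on $\tilde H_0$.

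The backward direction differs organizationally. You argue directly that every bounding hyperplane of the slab $Y$ other than $H,H'$ is already a bounding hyperplane of $\univ Z$, and hence inherits the $A_\pm$ partition. Your worry about extraneous bounding hyperplanes is unfounded: if $K\neq H,H'$ bounds $Y$, the edge witnessing $N(K)\cap Y\neq\varnothing$ has one endpoint in $Y$ and the other either outside $\univ Z$ (so $K$ bounds $\univ Z$) or inside $\univ Z\smallsetminus Y$ (forcing that edge to cross $H$ or $H'$, whence $K\in\{H,H'\}$, a contradiction). The paper instead proceeds indirectly: it uses the no-1-cut hypothesis to show that each half-space Whitehead complex $\Wh_X(\univ Z_\pm)$ is connected (were it not, some finite truncation $Y_n$ would be a 1-cut), and then extracts the 2-cut via Lemmas~\ref{lem: Disconnected splicing} and~\ref{lem: Decomposing cut sets}. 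The paper also locates the hyperplane $H$ cohomologically (via $H^1(Z)\cong\Z$ and duality with hyperplanes) and invokes \cite[Lemma 2.3]{caprace_rank_2011} to arrange $\phi(H)\cap H=\varnothing$, which is the precise content behind your phrase ``transversely to $\phi$'s axis''. The paper's route re-uses the cut-set machinery already developed and sidesteps the bookkeeping; your direct route is shorter once the bookkeeping is verified, and both are valid.
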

\begin{proof}
By Lemma \ref{lem: Hulls and splittings}, $\pi_1(X)$ is cyclically decomposable if and only if there is a convex $\mathbb{Z}$-periodic subcomplex $\univ{Z}\subseteq\univ{X}$ that separates $\univ{X}$. By Lemma \ref{lem: Topological type of Whitehead complex}, $\univ{Z}$ separates $\univ{X}$ if and only if $\Wh_X(\univ{Z})$ is disconnected.

Suppose that there is a periodic 2-cut $Y\subseteq \univ{X}$ between hyperplanes $H,H'$. By definition, $H,H'$ define a cut pair in $\Wh_X(Y)$, and the corresponding oriented abstract hyperplane components have opposite type. Therefore, we may define the amalgamated subcomplex
\[
Y\cup_{\phi(H)} \phi(Y)\,.
\]
Iterating using all powers of $\phi$, we may set $\univ{Z}$ to be the infinite amalgamation
\[
\cdots \cup_{\phi^{-1}(H)} \phi^{-1}(Y)\cup_H Y\cup_{\phi(H)} \phi(Y)\cup_{\phi^2(H)}\cdots\,,
\]
which is $\langle\phi\rangle$-invariant and cocompact. By Proposition \ref{prop: Constructing Whitehead complexes by splicing}, $\Wh_X(\univ{Z})$ is constructed by iteratively taking connect sums of copies of $\Wh_X(Y)$ along copies of $H$, meaning that $\Wh_X(\univ{Z})$ is the infinite union
\[
\cdots \cup_L \Wh_X(Y)_{\{H,H'\}}\cup_L \Wh_X(Y)_{\{H,H'\}}\cup_L \Wh_X(Y)_{\{H,H'\}}\cup_L\cdots\,,
\]
where $L=\Lk_{\Wh_X(Y)}(H)\cong \Wh_H(H_Y)$ by Lemma \ref{lem: Links in Whitehead complexes}. The Mayer--Vietoris sequence for reduced cohomology (with coefficients in $\Z/2\Z$) asserts that
\[
0\to\tilde{H}^0(\Wh_X(\univ{Z}))\to \prod_{n\in\Z} \tilde{H}^0(\Wh_X(Y)_{\{H,H'\}}) \to \prod_{n\in\Z} \tilde{H}^0(L)
\]
is exact. The right-hand map sends the non-trivial element
\[
(c_Y)_{n\in\Z}\in\prod_{n\in\Z} \tilde{H}^0(\Wh_X(Y)_{\{H,H'\}})
\]
to $(c_K)-(\phi^*(c_{K'}))=0$, so $\tilde{H}^0(\Wh_X(\univ{Z}))$ is non-trivial, and $\Wh_X(\univ{Z})$ is indeed disconnected as required.

Conversely, suppose that there is a convex $\Z$-periodic subcomplex $\univ{Z}\subseteq\univ{X}$ with $\Wh_X(\univ{Z})$ disconnected, with the infinite cyclic subgroup $\Z$ generated by some covering transformation $\phi$. By Lemma \ref{lem: Finitely many path components}, $\tilde{H}^0(\Wh_X(\univ{Z});\Z/2\Z)$ is finite so, after replacing $\phi$ by a power, we may assume that it acts trivially on $\tilde{H}^0(\Wh_X(\univ{Z});\Z/2\Z)$. As in the proof of Lemma \ref{lem: Finitely many path components}, $\univ{Z}$ has a compact hyperplane $K$ that separates the two ends, and extends to a hyperplane $H$ of $\univ{X}$. After replacing $\phi$ by a proper power, we may assume that $\phi(H)$ is disjoint from $H$.

Let $\univ{Z}_\pm$ be the two half-spaces of $H$, so
\[
\univ{Z}=\univ{Z}_-\cup_H\univ{Z}_+\,.
\]
We claim that $\Wh_X(\univ{Z}_+)$ and $\Wh_X(\univ{Z}_-)$ are connected. Indeed, let $Y$ be the compact component of $\univ{Z}$ that results from cutting along $H$ and $\phi(H)$. Then $\univ{Z}_+$ may be written as the ascending union of compact, convex subcomplexes $Y_n$ where
\[
Y_n=Y\cup_{\phi(H)} \phi(Y)\cup_{\phi^2(H)}\ldots \cup_{\phi^{n-1}(H)}\phi^{n-1}(Y)
\]
and, correspondingly, the Whitehead complex of $\univ{Z}_+$ may be written as an ascending union 
\[
\Wh_X(\univ{Z}_+)=\bigcup_{n\geq 1}\Wh_X(Y_n)_{\phi^n(H)}\,.
\]
Thus, if $\Wh_X(\univ{Z}_+)$ were disconnected then it would follow that $\Wh_X(Y_n)_{\phi^n(H)}$ was disconnected for some $n$. If so then $Y_n$ would be a 1-cut in $X$, contradicting the assumption. Thus, $\Wh_X(\univ{Z}_+)$ is connected and, symmetrically, so is $\Wh_X(\univ{Z}_-)$.  Since
\[
\Wh_X(\univ{Z})=\Wh_X(\univ{Z}_-)\#_H\Wh_X(\univ{Z}_+)
\]
is disconnected,  it follows that $H$ is a cut vertex in both $\Wh_X(\univ{Z}_-)$ and $\Wh_X(\univ{Z}_+)$ by Lemma \ref{lem: Disconnected splicing}.

Now, $H'=\phi(H)$ is a hyperplane of $\univ{Z}_+$ disjoint from $H$, and cutting along $H'$ decomposes $\univ{Z}_+$ as
\[
\univ{Z}_+=Y\cup_{H'}\phi(\univ{Z}_+)
\]
where $Y=\univ{Z}_+\cap \phi(\univ{Z}_-)$; note that $H$ is a bounding hyperplane of $Y$. Hence,
\[
\Wh_{X}(\univ{Z}_+)=\Wh_X(Y)\#_{H'}\Wh_{X}(\phi(\univ{Z}_+))
\]
and so, since we saw above that $\Wh_{X}(\phi(\univ{Z}_+))\cong\Wh_X(\univ{Z}_+)$ is connected, $Y$ is a 2-cut between $H$ and $H'$  by Lemma \ref{lem: Decomposing cut sets}. 

Finally, note that $Y$ is a periodic 2-cut. Indeed, setting $H'=\phi(H)$, it only remains to construct a suitable $c_Y\in\tilde{H}^0(\Wh_X(Y)_{\{H,H'\}})$ that satisfies Definition \ref{def: Periodic 2-cut}. To do this, as above we consider the exact sequence
\[
0\to\tilde{H}^0(\Wh_X(\univ{Z}))\stackrel{\alpha}{\to} \prod_{n\in\Z} \tilde{H}^0(\Wh_X(Y)_{\{H,H'\}}) \stackrel{\beta}{\to} \prod_{n\in\Z} \tilde{H}^0(L)
\]
provided by Mayer--Vietoris, where $L=\Wh_H(K)$. Note that  $\Z\cong\langle\phi^*\rangle$ acts on all terms in the exact sequence, and the maps $\alpha$ and $\beta$ are $\Z$-equivariant.

Since $\Wh_X(\univ{Z})$ is disconnected, there is some non-zero  $a \in \tilde{H}^0(\Wh_X(\univ{Z}))$, and $\alpha(a)=(b_n)_{n\in\Z}$ is also non-zero because $\alpha$ is injective. But $\phi^*$ acts trivially on $\tilde{H}^0(\Wh_X(\univ{Z}))$, so $\phi^*(a)=a$, and $(b_n)_{n\in\Z}$ is actually a non-zero constant sequence $(b)$. Set $c_Y=b$. 

Write $i$ for the inclusion of $L$ into $\Wh_X(Y)_{\{H,H'\}}$ and $j$ for the inclusion of $L$ into $\Wh_X(\phi^{-1}(Y))_{\{\phi^{-1}(H),H\}}$. Then
\[
\beta(b)=(i^*(b)-\phi^*\circ j^*(b))\,,
\]
so $i^*(b)=\phi^*(j^*(b))$ by exactness. Therefore, setting $c_K=i^*(b)$ and $c_{K'}=j^*(b)$, we have $c_K=\phi^*(c_{K'})$ as required.
\end{proof}

\subsection{A bound on the widths of \texorpdfstring{$2$}{2}-cuts}

Although Theorem \ref{thm: Periodic slices and cyclic splittings} provides a useful characterisation of the existence of cyclic splittings, in order to make it effective we need an \emph{a priori} bound on the complexity of the 2-cuts that arise.  Such a bound is the main result of this section, and is proved in Theorem \ref{thm: Slice length bound} below. It is phrased in terms of the \emph{width} of a 2-cut.

\begin{definition}[Width of a $k$-cut]\label{def: Width of a cut}
Let $X$ be a compact, non-positively curved cube complex, and consider a $k$-cut $Y\to X$ between bounding hyperplanes $\{H_1,\ldots, H_k\}$ for $k\geq 2$. The \emph{width} of $Y$ is the shortest $\ell^2$-distance between any distinct $H_i$ and $H_j$.
\end{definition}

The next lemma provides some geometric control on cuts, assuming that there are no 1-cuts.

\begin{lemma}\label{lem: Cuts are contained in hulls}
Let $\univ{X}$ be a one-ended a CAT(0) cube complex without 0- or 1-cuts. Consider a $k$-cut $Y$ between bounding hyperplanes $H_1,\ldots, H_k$. Then $Y\cap \mathrm{Hull}\left(\bigcup_{i=1}^k H_i\right)$ is also a $k$-cut between $H_1,\ldots, H_k$.
\end{lemma}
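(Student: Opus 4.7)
The strategy is to decompose $Y$ as a sequence of compact convex ``flaps'' attached to $Y':=Y\cap\Hull(\bigcup_i H_i)$, and then use the splicing machinery of \S\ref{sec: Splicing cut-set lemmas} to transfer the cut-set structure. First, since each $H_i\subseteq \Hull$ implies $(H_i)_Y\subseteq H_i\subseteq \Hull$, every $H_i$ remains a bounding hyperplane of $Y'$ with the same bounding component $(H_i)_{Y'}=(H_i)_Y$. The crucial geometric observation is that each hyperplane $K$ of $Y$ separating a flap from $Y'$ can be chosen to be a bounding hyperplane of $\Hull$, hence disjoint from $\Hull$ and in particular disjoint from every $H_i$; so $K$ and any $H_i$ are non-adjacent wherever both appear in a Whitehead complex.

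Iteratively cutting along these hyperplanes produces a chain $Y = Y_0 \supsetneq Y_1 \supsetneq \cdots \supsetneq Y_n = Y'$ with $Y_{j-1} = Y_j \cup_{K_j} Z_j$. Iterating Proposition \ref{prop: Constructing Whitehead complexes by splicing} gives
\[
\Wh_X(Y) = \Wh_X(Y')\,\#_{K_1}\,\Wh_X(Z_1)\,\#_{K_2}\,\cdots\,\#_{K_n}\,\Wh_X(Z_n).
\]
The no-$0$-cut hypothesis ensures each $\Wh_X(Z_j)$ is connected, while the no-$1$-cut hypothesis forbids any $K_j$ from being a cut vertex in an intermediate Whitehead complex. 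Since no $H_i$ is adjacent to any $K_j$, star-removal commutes with the splicings:
\[
\Wh_X(Y)_{\{H_1,\ldots,H_k\}} = \Wh_X(Y')_{\{H_1,\ldots,H_k\}}\,\#_{K_1}\,\Wh_X(Z_1)\,\#_{K_2}\,\cdots\,\#_{K_n}\,\Wh_X(Z_n).
\]
This is disconnected by hypothesis; iteratively applying Lemma \ref{lem: Disconnected splicing}, using connectivity of each $\Wh_X(Z_j)$ and ruling out the cut-vertex alternative at each stage via the no-$1$-cut hypothesis, forces $\Wh_X(Y')_{\{H_1,\ldots,H_k\}}$ to be disconnected. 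Thus $\{H_1,\ldots,H_k\}$ is a cut set of $\Wh_X(Y')$.

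For minimality, any cut set $S$ of $\Wh_X(Y')$ of size $\ell < k$ would propagate forward through the splicings -- disconnections of $\Wh_X(Y')_S$ survive gluing connected flaps along non-adjacent vertices -- to yield a cut set of $\Wh_X(Y)$ of size $\leq \ell$, contradicting that $Y$ is a $k$-cut. Hence $Y'$ is a $k$-cut between $H_1,\ldots,H_k$. The principal technical obstacle is the iterated splicing bookkeeping: at each stage one must carefully verify that the splicing identity survives star-removal and that the no-$1$-cut hypothesis is enough to eliminate the cut-vertex alternatives of Lemma \ref{lem: Disconnected splicing}, preventing the external hyperplanes $K_j$ from being absorbed as extra cut vertices.
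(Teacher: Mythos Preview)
Your approach is essentially the same as the paper's: both iteratively cut off convex pieces of $Y$ lying outside the hull along hyperplanes $K_j$ that have all the $H_i$ on one side, and use the splicing formula (Proposition~\ref{prop: Constructing Whitehead complexes by splicing}) together with the no-0/1-cut hypotheses to push the cut set down the chain. The paper streamlines the inductive step by invoking Lemma~\ref{lem: Decomposing cut sets} directly (with $V=\varnothing$ on the flap side), which packages in one line the dichotomy ``either $\{H_i\}$ is already a cut set in $\Wh_X(Y_j)$, or $K_j$ is a cut vertex in the flap $\Wh_X(Z_j)$''; your unpacking via Lemma~\ref{lem: Disconnected splicing} reaches the same conclusion, provided you apply it with the flap playing the role of $A$ so that the cut-vertex conclusion lands in $\Wh_X(Z_j)$, where the no-1-cut hypothesis genuinely applies.

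One caution: your forward-propagation argument for minimality is not sound as stated. A minimal cut set $S$ of $\Wh_X(Y')$ may contain some $K_j$, or contain bounding hyperplanes of $Y'$ that cross $K_j$ (hence are adjacent to it), so removing the stars of $S$ need not commute with the splicing and $S$ need not determine a cut set of $\Wh_X(Y)$ at all. The paper does not argue minimality either --- its inductive step only shows that $\{H_1,\ldots,H_k\}$ is a cut set for each $Y_n$ --- but this is harmless, since the lemma is only applied with $k=2$, where minimality is automatic from the absence of 0- and 1-cuts.
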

\begin{proof}
Let $K_1,\ldots K_l$ be the hyperplanes of $Y$ that have a half-space $U_j$ such that $H_i\subseteq U_j$ for all $i$. Write $Y_n=Y\cap \bigcap_{j=1}^n U_j$ for each $0\leq n\leq l$. Note that
\[
Y_l=Y\cap \mathrm{Hull}\left(\bigcup_{i=1}^k H_i\right)\,.
\]
To prove the lemma, we show by induction on $n$ that each $Y_n$ is a $k$-cut.

In the base case, $Y_0=Y$ and there is nothing to prove. For the inductive step, let $n\geq 1$. If $K_n$ is disjoint from $Y_{n-1}$ then $Y_{n-1}=Y_n$ and there is, again, nothing to prove. Otherwise,  $K_n$ decomposes $Y_{n-1}$ as
\[
Y_{n-1}=Y_n\cup_{K_n} Y'_n
\]
where $Y_n=Y_{n-1}\cap U_n$.  Then $H_1,\ldots, H_k$ all bound $Y_n$ and not $Y'_n$ (since they are disjoint from $K_n$, and $K_n$ cannot separate $Y$ from any $H_i$). By Proposition \ref{prop: Constructing Whitehead complexes by splicing}
\[
\Wh_X(Y_{n-1})\cong\Wh_X(Y_n)\#_{K_n}\Wh_X(Y'_n)\,,
\]
and $K_n$ is not a cut vertex in $\Wh_X(Y'_n)$ because $\univ{X}$ has no 1-cuts. Therefore $\{H_1,\ldots, H_k\}$ is a cut set in $\Wh_X(Y_n)$ by Lemma \ref{lem: Decomposing cut sets}, so $Y_n$ is also a $k$-cut between $H_1,\ldots, H_k$. Taking $n=l$, the result follows.
\end{proof}

We now turn our attention to some CAT(0) geometry of hyperplanes. Consider a 2-cut $Y$ between hyperplanes $H_1$ and $H_2$, and let $\gamma$ be an $\ell^2$-geodesic between $H_1$ and $H_2$. In order to prove the desired bound, we need to find long chains of mutually disjoint hyperplanes that cross $\gamma$ at an angle that isn't too acute. A hyperplane $K$ is said to be \emph{$\epsilon$-almost orthogonal} to $\gamma$ if $\gamma$ crosses $K$ at an angle of at least $\epsilon$. 

The next lemma shows that many of the hyperplanes crossed by $\gamma$ will be almost orthogonal, for a suitable value of $\epsilon$. The proof uses elementary Euclidean trigonometry.

\begin{lemma}\label{lem: Many almost orthogonal hyperplanes}
Let $d\geq 2$, and let $\gamma$ be an $\ell^2$-geodesic in a $d$-dimensional CAT(0) cube complex $\univ{X}$. Suppose that $\gamma$ crosses two hyperplanes $K_1$ and $K_2$ consecutively. Then at least one of $K_1$ and $K_2$ is $\epsilon_d$-almost orthogonal to $\gamma$, where $\epsilon_d=\tan^{-1}\left(\frac{1}{2\sqrt{d-1}}\right)$.
\end{lemma}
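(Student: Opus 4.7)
The plan is to localise the argument to a single cube and reduce to elementary Euclidean trigonometry. Since $\gamma$ crosses no hyperplane strictly between $p_1 \in K_1$ and $p_2 \in K_2$, the relevant segment of $\gamma$ lies in a single cube $C$ of dimension at most $d$ (in the generic case) or, more generally, in a flat union of cubes glued along codimension-$1$ faces, which is isometric to a region of Euclidean space and may be treated as a single cube. I would therefore fix Euclidean coordinates on $C \cong [0,1]^{d'}$, with $d' \le d$, so that $K_1 = \{x_i = 1/2\}$ and $K_2 = \{x_j = 1/2\}$ for distinct indices $i, j$, and write $v = (v_1, \ldots, v_{d'})$ for the unit direction of $\gamma$.

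By the definition of angle between a direction and a hyperplane, $\sin\theta_1 = |v_i|$ and $\sin\theta_2 = |v_j|$. A short computation shows that the condition $\tan\theta_\ell < \frac{1}{2\sqrt{d-1}}$ is equivalent to $v_\ell^2 < \frac{1}{4d-3}$, so the proof will proceed by contradiction from the assumption that both $v_i^2$ and $v_j^2$ satisfy this strict inequality. The first structural ingredient is to exploit the consecutive-crossings hypothesis: for each $k \ne i, j$, the segment from $p_1$ to $p_2$ does not cross the midcube $\{x_k = 1/2\}$, so $p_1^{(k)}$ and $p_2^{(k)}$ lie on the same side of $1/2$. Setting $t = |p_2 - p_1|$, this gives $t|v_k| = |p_2^{(k)} - p_1^{(k)}| \le 1/2$, and hence $v_k^2 \le \frac{1}{4t^2}$ for every $k \ne i, j$.

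Summing these perpendicular-component bounds and combining with the unit-norm constraint $\sum_k v_k^2 = 1$ together with the contradiction hypotheses $v_i^2, v_j^2 < \frac{1}{4d-3}$ should yield
\[
\frac{4d-5}{4d-3} < \sum_{k \ne i, j} v_k^2 \le \frac{d-2}{4t^2},
\]
which forces an \emph{upper} bound on $t$ of the form $t^2 < \frac{(d-2)(4d-3)}{4(4d-5)}$. I would then produce a matching \emph{lower} bound by using the cube-geometric constraints at $p_1$ and $p_2$ themselves — namely $t|v_i|, t|v_j| \le 1/2$, coming from the fact that the extensions of $\gamma$ past $p_1$ and $p_2$ remain within the unit cube — together with the geometry at $\partial C$ where $\gamma$ enters and exits. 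Contradiction follows.

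The main obstacle will be matching these upper and lower bounds on $t^2$ sharply: the constant $\frac{1}{2\sqrt{d-1}}$ in $\epsilon_d$ is engineered precisely to make the two estimates collide, so the trigonometry must be carried out with care and may require a slightly clever choice of reference cube or of the coordinates encoding the extensions $q_0, q_1 \in \partial C$. I expect the tight case to correspond to the extremal configuration in which $\gamma$ is as nearly parallel as possible to the codimension-$2$ hyperplane $K_1 \cap K_2$, and the argument will essentially confirm that this extremal configuration saturates the stated bound.
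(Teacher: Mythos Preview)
Your argument has a genuine gap at the decisive step: the promised ``matching lower bound'' on $t$ is never produced, and in fact no such bound exists. Take $\univ{X}=\R^3$ with its standard cubulation (so $d=3$ and $\epsilon_3=\arctan\bigl(1/(2\sqrt2)\bigr)\approx 19.5^\circ$) and let $\gamma(s)=(\tfrac12,\tfrac{9}{20},0)+s\cdot(\tfrac{1}{10},\tfrac{1}{10},\sqrt{0.98})$. Then $\gamma$ meets $K_1=\{x_1=\tfrac12\}$ at $s=0$ and $K_2=\{x_2=\tfrac12\}$ at $s=\tfrac12$, while the only nearby competitor $\{x_3=\tfrac12\}$ is not reached until $s=1/(2\sqrt{0.98})>\tfrac12$; hence $K_1,K_2$ really are consecutive. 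Both crossing angles equal $\arcsin(\tfrac{1}{10})\approx 5.7^\circ<\epsilon_3$. Here $t=\tfrac12$ sits comfortably below your upper bound $t^2<9/28$, and nothing forces $t$ to be large: the inequalities $t|v_i|,\,t|v_j|\le\tfrac12$ you invoke are themselves \emph{upper} bounds, and the entry/exit points on $\partial C$ add no new constraint. (Your localisation to a single flat cube is separately unjustified in a general CAT(0) cube complex --- the region between consecutive crossings is the star of a vertex, which need not be Euclidean --- but that is secondary to the missing lower bound.)

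This example in fact shows that the lemma as stated fails for $d\ge3$, so no proof strategy can close the gap. The paper's argument is organised differently --- it works in the product strip $N(K_1)\cong K_1\times[-\tfrac12,\tfrac12]$ rather than in a single cube --- but it breaks on the same example: the final sentence identifies the angle $\pi/2-\theta$ in the right triangle at $\gamma(t_1)$ with the angle of incidence of $\gamma$ with $K_2$, whereas that triangle angle is the angle between $\gamma$ and the direction perpendicular to $K_1$, a direction lying \emph{inside} $K_2$, and hence gives only an \emph{upper} bound on the incidence angle. What the downstream application (Lemma~\ref{lem: Pencil between distant hyperplanes}) actually requires is merely that a definite proportion of the hyperplanes crossed by a long geodesic are $\epsilon$-almost orthogonal for some uniform $\epsilon=\epsilon(d)>0$, and a statement of that strength is recoverable; the two-crossing version you are attempting, however, cannot hold.
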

\begin{proof}
If $\gamma$ crosses $K_1$ and $K_2$ simultaneously then the result is immediate, because $K_1$ and $K_2$ are orthogonal to each other and $\epsilon_d<\pi/4$.

Otherwise, suppose that $\gamma(0)\in K_1$ and $\gamma$ crosses $K_1$ at an angle $\theta<\epsilon_d$. Because the (closed) regular neighbourhood $N(K_1)$ of $K_1$ is isometric to $K_1\times [-1/2,1/2]$, we have $\gamma(t)\in N(K_1)$ whenever $|t|\sin\theta\leq 1/2$. In particular, if $\gamma(t_0)$ is in the boundary of $N(K_1)$ then $|t_0|=1/(2\sin\theta)$. Let $p$ be the orthogonal projection of $\gamma(t_0)$ to $K_1$. The three points $\gamma(0)$, $\gamma(t_0)$, $p$ together form a right-angled Euclidean triangle, so
\[
d(\gamma(0),p)=|t_0|\cos\theta=\frac{1}{2}\cot\theta\,.
\]
Because $\theta<\epsilon_d$, it follows that
\[
d(\gamma(0),p)> \frac{1}{2}\cot\epsilon_d=\sqrt {d-1}
\]
which implies that $\gamma(0)$ and $p$ are separated by a hyperplane in the $(d-1)$-dimensional cube complex $K_1$. Since every such hyperplane extends uniquely to a hyperplane in $\univ{X}$ orthogonal to $K_1$, it follows that $\gamma$ crosses a hyperplane in $N(K_1)$ orthogonal to $K_1$.

The first such hyperplane is by assumption $K_2$. Let $\gamma(t_1)\in K_2$ and let $q$ be the orthogonal projection of $\gamma(0)$ to $K_2$. Then the three points $\gamma(0)$, $\gamma(t_1)$, $q$ together span a right-angled Euclidean triangle, whence $\gamma$ makes an angle
\[
\pi/2-\theta >  \pi-\epsilon_d>\pi/4>\epsilon_d
\]
with $K_2$, as required.
\end{proof}

The next lemma guarantees that a sufficiently distant pair of hyperplanes will be separated by many mutually disjoint hyperplanes, all themselves almost orthogonal to a geodesic.

\begin{lemma}\label{lem: Pencil between distant hyperplanes}
Let $H_1$ and $H_2$ be hyperplanes in a $d$-dimensional CAT(0) cube complex $\univ{X}$, and let $\gamma$ be a shortest $\ell^2$-geodesic between them. For every $n$ there is a $C$ such that, if the length of $\gamma$ is greater than $C$, then there exists a sequence of mutually disjoint hyperplanes
\[
K_1,\ldots, K_n
\]
that each separate $H_1$ from $H_2$, all of which are $\epsilon_d$-almost orthogonal to $\gamma$.
\end{lemma}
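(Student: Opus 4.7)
The plan is to build the chain $K_1,\ldots,K_n$ in three stages: turn the length of $\gamma$ into many hyperplane crossings, discard the non-almost-orthogonal ones via Lemma \ref{lem: Many almost orthogonal hyperplanes}, and then extract a nested subchain using Dilworth's theorem together with the Helly property for hyperplanes.

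For the first stage, observe that between any two consecutive hyperplane crossings $\gamma$ lies in the interior of a single cube and so traverses $\ell^2$-distance at most $\sqrt{d}$, the diameter of a unit $d$-cube. Hence if $|\gamma|\geq C$ then $\gamma$ crosses at least $M\geq\lfloor C/\sqrt{d}\rfloor-1$ hyperplanes in turn, which I label $L_1,\ldots,L_M$ in the order they are met; by Lemma \ref{lem: l2 geodesics and separating hyperplanes} each separates $H_1$ from $H_2$. Applying Lemma \ref{lem: Many almost orthogonal hyperplanes} to the disjoint consecutive pairs $(L_1,L_2),(L_3,L_4),\ldots$ then retains a subsequence $L'_1,\ldots,L'_{M'}$ with $M'\geq\lfloor M/2\rfloor$, all $\epsilon_d$-almost orthogonal to $\gamma$ and all still separating $H_1$ from $H_2$.

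For the second stage, I put a partial order $\preceq$ on $\{L'_i\}$ by declaring $L'_i\preceq L'_j$ iff $L'_j$ is contained in the half-space of $L'_i$ that meets $H_2$. Any two distinct hyperplanes of $\univ{X}$ are either disjoint, in which case they nest (so are $\preceq$-comparable), or cross (so are $\preceq$-incomparable). A $\preceq$-antichain is therefore a set of pairwise-crossing hyperplanes, whose members share a common cube by Sageev's Helly property (Remark \ref{rem: WCs are flag}); the dimension of that cube equals the cardinality of the antichain, so every $\preceq$-antichain has size at most $d$. By Dilworth's theorem, $(\{L'_i\},\preceq)$ decomposes as a union of at most $d$ chains, and pigeonholing produces a chain of length at least $\lceil M'/d\rceil$. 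A $\preceq$-chain is a linearly nested, hence pairwise disjoint, sequence of hyperplanes, so it suffices to choose $C$ large enough that $M'/d\geq n$; for instance $C:=\sqrt{d}(2dn+2)$ suffices, and returns the desired $K_1,\ldots,K_n$.

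The only genuinely geometric input is the $d$-bound on $\preceq$-antichains via Helly, which is what translates the sheer $\ell^2$-length of $\gamma$ into a nested chain of pairwise disjoint hyperplanes; I expect this to be the only subtle step, with the remainder of the argument reducing to the cube-diameter estimate in the first stage and an application of Dilworth.
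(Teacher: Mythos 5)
Your proof is correct and reaches the same conclusion as the paper, but by a genuinely different combinatorial route. Where the paper sets $N = R(n, d+1)$ and invokes Ramsey's theorem (any $N$ hyperplanes, coloured by crossing/disjoint, contain either $d+1$ pairwise-crossing ones, which is impossible, or $n$ pairwise-disjoint ones), you instead order the almost-orthogonal hyperplanes by the side of $H_2$ they face, note via Helly that antichains have size $\le d$, and extract a long chain via Dilworth. Both are sound; Dilworth gives you the cleaner linear bound $C = \sqrt{d}(2dn+2)$ where Ramsey would be exponential in $n$, though the paper doesn't care about the quantitative dependence. Two small points worth tightening. First, ``$\gamma$ lies in the interior of a single cube between consecutive crossings'' is not quite right: $\gamma$ stays in a single component of the complement of the hyperplanes, which is a union of open orthants sharing a vertex and may span several cubes; but that component has diameter $\le \sqrt{d}$ (each orthant has radius $\sqrt{d}/2$ about the shared vertex), so the $\sqrt{d}$ bound and everything downstream is fine. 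Second, you should verify explicitly that $\preceq$ is transitive on hyperplanes separating $H_1$ from $H_2$: if $B$ lies in the $H_2$-half-space $A^+$ of $A$, then in fact $B^+ \subset A^+$, so $C \subset B^+ \subset A^+$; this is a standard nesting property of separating hyperplanes but deserves a sentence, as transitivity is exactly what licenses Dilworth. One final note: the paper checks disjointness of the $K_i$ from $H_1, H_2$ directly via a two-right-angles argument at the end of its proof, whereas you front-load it via Lemma \ref{lem: l2 geodesics and separating hyperplanes}; both rely on $\gamma$ being a shortest geodesic realising $d(H_1, H_2)$, which is implicit in the statement and should be noted.
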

\begin{proof}
Let $N$ be the Ramsey number $R(n,d+1)$. Recall that the $\ell^1$ distance between $H_1$ and $H_2$ is equal to the number of hyperplanes crossed by $\gamma$. Since the $\ell^1$ and $\ell^2$ metrics are quasi-isometric, there is a $C$ such that, if $\gamma$ has $\ell^2$ length at least $C$, then it crosses at least $2N$ hyperplanes. By Lemma \ref{lem: Many almost orthogonal hyperplanes}, at least $N$ of these hyperplanes are $\epsilon_d$-almost orthogonal to $\gamma$.  Since no set of $d+1$ hyperplanes of $\univ{X}$ pairwise intersect, it follows from the definition of $N$ that there is a pairwise disjoint set
\[
\{K_1,\ldots, K_n\}
\]
of $n$ hyperplanes that intersect $\gamma$ $\epsilon_d$-almost orthogonally.

Finally, note that any hyperplane $K$ that crosses $H_i$ meets it orthogonally, and therefore if $\gamma$ also crossed $K$ there would be a compact geodesic triangle in $\univ{X}$ with two right angles, contradicting the CAT(0) condition.  Therefore, each $K_j$ is disjoint from $H_1$ and $H_2$, and therefore separates them from each other.
\end{proof}

We needed to find almost orthogonal hyperplanes because of the following lemma, which will enable us to bound the diameter of a hyperplane intersecting a suitably chosen 2-cut.

\begin{lemma}\label{lem: Sine rule estimate}
Suppose that an $\ell^2$ geodesic $\gamma$ intersects a hyperplane $H$ at an angle $\theta$. Let $x$ be the point of intersection and let $y$ be a point in $H$. Then
\[
d(x,y)\sin\theta \leq d(y,\gamma)\,.
\]
In particular, if $H$ is $\epsilon$-almost orthogonal to $\gamma$, $d(x,y)\leq d(y,\gamma)/\sin\epsilon$.
\end{lemma}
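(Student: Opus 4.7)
The plan is to apply CAT(0) comparison geometry to the geodesic triangle formed by $x$, $y$, and the nearest-point projection of $y$ onto $\gamma$.

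First, since $\univ{X}$ is CAT(0) and $\gamma$ is a closed convex subset, there is a unique nearest point $z \in \gamma$ to $y$, and $d(y,z)=d(y,\gamma)$. The standard characterization of nearest-point projection in CAT(0) spaces (cf.\ \cite[Proposition II.2.4]{bridson_metric_1999}) gives that the geodesic $[y,z]$ meets $\gamma$ orthogonally at $z$, so the angle $\angle_z$ at $z$ in the geodesic triangle with vertices $x,y,z$ satisfies $\angle_z\geq \pi/2$.

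Second, because the hyperplane $H$ is a convex subcomplex of $\univ{X}$, the geodesic $[x,y]$ lies entirely in $H$, so its initial tangent direction at $x$ lies in $T_xH$. By the defining property of $\gamma$ intersecting $H$ at angle $\theta$, every direction in $T_xH$ subtends an angle at least $\theta$ with the tangent direction of $\gamma$ at $x$; concretely, writing the unit tangent to $\gamma$ as $\cos\theta\,u+\sin\theta\,n$ for some $u\in T_xH$ and $n$ a unit normal to $H$ in the local Euclidean model at $x$, any unit $w\in T_xH$ has inner product at most $\cos\theta$ with this vector. Hence the angle $\angle_x$ at $x$ in the triangle satisfies $\angle_x\geq \theta$.

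Finally, I would pass to a Euclidean comparison triangle with vertices $\bar x,\bar y,\bar z$ having the same side lengths as $x,y,z$. CAT(0) comparison forces $\bar\angle_{\bar x}\geq \angle_x\geq \theta$ and $\bar\angle_{\bar z}\geq \angle_z\geq \pi/2$; since the Euclidean angles sum to $\pi$, the latter bound forces $\bar\angle_{\bar x}\leq \pi/2$, so $\sin\bar\angle_{\bar x}\geq \sin\theta$. The Euclidean law of sines then gives
\[
d(y,\gamma)\;=\;d(y,z)\;=\;\frac{\sin\bar\angle_{\bar x}}{\sin\bar\angle_{\bar z}}\,d(x,y)\;\geq\;d(x,y)\sin\theta,
\]
which is the desired inequality; the ``in particular'' clause follows by dividing through by $\sin\epsilon$. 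The main delicate point is the lower bound $\angle_x\geq\theta$, which is really just an unpacking of what it means for $\gamma$ to cross $H$ at angle $\theta$ together with the total geodesicity of $H$; the remaining steps are a textbook application of CAT(0) comparison and Euclidean trigonometry.
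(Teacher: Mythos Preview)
Your proof is correct and follows essentially the same route as the paper: form the triangle $x,y,z$ with $z$ the nearest point on $\gamma$ to $y$, pass to the Euclidean comparison triangle, and apply the law of sines together with the CAT(0) angle comparison $\bar\angle_{\bar x}\geq\theta$. Your version is in fact slightly more careful than the paper's, since you make explicit the bound $\bar\angle_{\bar z}\geq\pi/2$ (and hence $\bar\angle_{\bar x}\leq\pi/2$), which is what guarantees $\sin\bar\angle_{\bar x}\geq\sin\theta$; the paper leaves this step implicit.
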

\begin{proof}
Let $z$ be the closest point on $\gamma$ to $y$, and consider the geodesic triangle with vertices $x$, $y$ and $z$. Let $\bar{\theta}$ be the angle in the Euclidean comparison triangle at the vertex corresponding to $x$, which is at least $\theta$ by \cite[Proposition 1.7(4)]{bridson_metric_1999}. Now, the Euclidean sine rule gives
\[
d(y,z)\geq d(x,y)\sin\bar{\theta}\geq d(x,y)\sin\theta
\]
as required.
\end{proof}

The proof of the claimed bound on the widths of 2-cuts uses the pigeonhole principle, applied to isomorphism types of abstract hyperplane components of a given diameter. 

\begin{remark}\label{rem: Finitely many abstract bounding hyperplane components of given diameter}
Let $X$ be a compact, non-positively curved cube complex, and let $R\geq 0$. The set of isomorphism types of abstract bounding hyperplane components $Z$ over $X$ of diameter at most $R$ is finite. Indeed, the bound on diameter implies that there are only finitely many possible subcomplexes $Z$ of hyperplanes $H$ of $\univ{X}$, up to the action of $\pi_1(X)$; since $\Wh_H(Z)$ is compact for each $Z$, the number of possible cohomology classes in $\tilde{H}^0(\Wh_H(Z);\Z/2\Z)$ is also finite.
\end{remark}

This finiteness is the basis for our argument using the pigeonhole principle.

\begin{theorem}\label{thm: Slice length bound}
Let $X$ be an essential, $d$-dimensional, hyperbolic cube complex without 0-cuts or 1-cuts. Then $\pi_1(X)$ is cyclically indecomposable if and only if there is a bound on the widths of 2-cuts in $X$.
\end{theorem}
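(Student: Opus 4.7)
The plan splits into the two directions of the equivalence, of which the second is the substantial one.

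\textbf{Easy direction (cyclic decomposability $\Rightarrow$ unbounded widths).} Suppose $\pi_1(X)$ is cyclically decomposable. Theorem \ref{thm: Periodic slices and cyclic splittings} supplies a periodic $2$-cut $Y$ between hyperplanes $H,\phi(H)$ with covering transformation $\phi$. Iterating the amalgamation along the pattern constructed in the proof of that theorem, set
\[
Y_n=Y\cup_{\phi(H)}\phi(Y)\cup_{\phi^2(H)}\cdots\cup_{\phi^{n-1}(H)}\phi^{n-1}(Y).
\]
Repeatedly applying Proposition \ref{prop: Constructing Whitehead complexes by splicing} and Lemma \ref{lem: Decomposing cut sets} shows inductively that each $Y_n$ is a $2$-cut between $H$ and $\phi^n(H)$. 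Since $\univ{X}$ is hyperbolic and $\phi$ has no fixed point on it, $\phi$ acts loxodromically with positive translation length $\tau$, so the width of $Y_n$ is bounded below by $\tau n-O(1)$, yielding $2$-cuts of unbounded width.

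\textbf{Hard direction (unbounded widths $\Rightarrow$ cyclic decomposability).} I would argue by the contrapositive, producing a periodic $2$-cut from a sufficiently wide one and then invoking Theorem \ref{thm: Periodic slices and cyclic splittings}. Let $Y$ be a $2$-cut between bounding hyperplanes $H_1,H_2$ of width $W$. By Lemma \ref{lem: Cuts are contained in hulls}, we may replace $Y$ by $Y\cap\Hull(H_1\cup H_2)$ and still have a $2$-cut between $H_1$ and $H_2$. Pick an $\ell^2$-geodesic $\gamma$ of length $W$ realising the distance from $H_1$ to $H_2$. By Lemma \ref{lem: Pencil between distant hyperplanes}, taking $W\geq C(m)$ produces a sequence of mutually disjoint, $\epsilon_d$-almost orthogonal hyperplanes $K_1,\ldots,K_m$ separating $H_1$ from $H_2$ along $\gamma$.

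\textbf{The key technical step} is a uniform bound $\diam(K_j\cap Y)\leq R$, where $R$ depends only on $X$ and $\delta$. The ingredients are: (i) by hyperbolicity together with Haglund's theorem, every point of $\Hull(H_1\cup H_2)$ is within distance $D=D(\delta,d)$ of $H_1\cup H_2\cup\gamma$; (ii) Lemma \ref{lem: Sine rule estimate} controls points of $K_j$ near $\gamma$, namely $d(y,x_j)\leq D/\sin\epsilon_d$ whenever $y\in K_j$ lies within $D$ of $\gamma$, where $x_j=K_j\cap\gamma$; (iii) since $K_j$ is disjoint from $H_i$, and both are $\ell^1$-quasiconvex in a hyperbolic complex, the coarse intersection $\{y\in K_j:d(y,H_i)\leq D\}$ has diameter bounded in terms of $\delta$ (this is where the one-endedness/essentiality of half-spaces and standard thin-quadrilateral geometry enter). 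Combining these three cases bounds $\diam(K_j\cap Y)$ uniformly.

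\textbf{Pigeonhole.} Orient each $K_j$ by the half-space $K_j^+$ containing $H_2$, and equip $K_j\cap Y$ with the partition of $\pi_0(\Wh_{K_j}(K_j\cap Y))$ induced by connectivity in $\Wh_X(Y\cap K_j^+)$ after deleting $\St(K_j)$ (this is an intrinsic datum attached to $K_j$ in $Y$). By Remark \ref{rem: Finitely many abstract bounding hyperplane components of given diameter}, the number $N$ of isomorphism types of oriented abstract hyperplane components of diameter $\leq R$ is finite. Choosing $m>N$, pigeonhole produces indices $j_1<j_2$ and a deck transformation $\phi\in\pi_1(X)$ with $\phi(K_{j_1})=K_{j_2}$, preserving the convex subcomplex, partition, and positive orientation. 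Cutting $Y$ successively along $K_{j_1}$ and $K_{j_2}$ and applying Lemma \ref{lem: Decomposing cut sets} (using that $X$ has no $1$-cuts) realises the slab $Y_{j_1,j_2}:=Y\cap K_{j_1}^+\cap K_{j_2}^-$ as a $2$-cut between $K_{j_1}$ and $K_{j_2}$. The two boundary orientations of this slab are $K_{j_1}^+$ and $K_{j_2}^-$, which are reversed by $\phi$ precisely because $\phi(K_{j_1}^+)=K_{j_2}^+=K_{j_2}\smallsetminus K_{j_2}^-$. Hence $Y_{j_1,j_2}$ is a periodic $2$-cut with covering transformation $\phi$, and Theorem \ref{thm: Periodic slices and cyclic splittings} yields the desired cyclic splitting.

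\textbf{Main obstacle.} The most delicate point is verifying that the \emph{partition} datum carried by the abstract hyperplane component matches under $\phi$ in the sense needed by Definition \ref{def: Periodic 2-cut}. One must check that the partition coming from the finite slab $Y_{j_1,j_2}$ agrees with the intrinsic one-sided partition chosen for pigeonhole; this requires showing, using the absence of $1$-cuts, that no extra identifications of components arise from paths in $\Wh_X(Y)$ that escape the slab. Carrying out this compatibility check, together with the coarse-geometric estimate $\diam(K_j\cap Y)\leq R$, are the technical hearts of the argument.
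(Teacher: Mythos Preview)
Your strategy is the paper's: pass to $Y\cap\Hull(H_1\cup H_2)$, use Lemma~\ref{lem: Pencil between distant hyperplanes} to find many $\epsilon_d$-almost orthogonal separating hyperplanes $K_j$, bound $\diam(K_j\cap Y)$ uniformly, pigeonhole on isomorphism types of oriented abstract hyperplane components, and read off a periodic $2$-cut from the slab between two matching $K_j$'s. Two points of execution differ, and in both the paper's version is tighter.

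First, your step (iii) is not justified as written: two disjoint hyperplanes can have arbitrarily large $D$-coarse intersection when they are close, and nothing you have arranged keeps $K_j$ far from $H_1$ or $H_2$. The paper sidesteps this by producing enough almost-orthogonal hyperplanes that $M+1$ of them meet $\gamma$ at distance greater than $(1+\sqrt{4d-3})R$ from both $H_1$ and $H_2$. A connectedness argument on $Z_j=K_j\cap Y$ then shows that $Z_j\cap(N_R(H_1)\cup N_R(H_2))$ is empty, so only the sine-rule estimate near $\gamma$ (your (ii)) is needed and $\diam Z_j\le 2R\sqrt{4d-3}$.

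Second, your ``main obstacle'' is an artefact of your one-sided partition. The paper equips $Z_j$ with the partition of $\pi_0(\Wh_{K_j}(Z_j))$ induced by connectivity in the single global complex $\Wh_X(Y)_{\{H_1,H_2\}}$; this datum is defined uniformly for all $j$ and is visibly non-trivial because $\{H_1,H_2\}$ is a cut pair. When pigeonhole matches $Z_i$ and $Z_j$, two applications of Lemma~\ref{lem: Decomposing cut sets} show the slab $Y'$ between $K_i$ and $K_j$ is a $2$-cut, and the matching of these global partitions is exactly what Definition~\ref{def: Periodic 2-cut} asks for; no separate compatibility check between one-sided and two-sided partitions is needed.
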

\begin{proof}
The `only if' direction is similar to the proof of Theorem \ref{thm: Periodic slices and cyclic splittings}. If $\pi_1(X)$ has a cyclic splitting then, by Lemma \ref{lem: Hulls and splittings}, there is an infinite cyclic subgroup $\langle \phi\rangle$ acting cocompactly on some isometrically embedded subcomplex $\univ{Z}\subseteq \univ{X}$ with $\Wh_X(\univ{Z})$ disconnected. As in the proof of Theorem \ref{thm: Periodic slices and cyclic splittings}, we can find a  hyperplane $H$ of $\univ{Z}$ such that, for any $n$, cutting $\univ{Z}$ along $H$ and $\phi^n(H)$ gives a 2-cut between $H$ and $\phi^n(H)$. Since $d(H,\phi^n(H))\to\infty$ with $n$, it follows that there is no bound on the widths of 2-cuts, as required.

For the converse, let $Y$ be a 2-cut between hyperplanes $H_1$ and $H_2$. By Lemma \ref{lem: Cuts are contained in hulls}, we may replace $Y$ by its intersection with $\Hull(H_1\cup H_2)$, and so assume that $Y$ is contained in $\Hull(H_1\cup H_2)$. Let $\gamma$ be an $\ell^2$-geodesic between $H_1$ and $H_2$.  It is easy to see that $H_1\cup \gamma\cup H_2$ is $2\delta$-quasiconvex in the $\ell^2$ metric and therefore, by Theorem \ref{thm: Haglund's theorem}, $\Hull(H_1\cup H_2)$ is contained in the $R$-neighbourhood of $H_1\cup\gamma\cup H_2$, for some uniform $R$.

Let $M$ be the number of isomorphism types of oriented abstract hyperplane components over $X$ of diameter at most $2R\sqrt {4d-3}$ (which is finite by Remark \ref{rem: Finitely many abstract bounding hyperplane components of given diameter}). Let $M'$ be such that any geodesic crossing $M'$ hyperplanes has an $\ell^2$-length of at least $(1+\sqrt{4d-3})R$, and let $N=M+2M'$. By Lemma \ref{lem: Pencil between distant hyperplanes}, if the width of $Y$ is greater than a constant $C$, then we can find a sequence of $M+1$ hyperplanes
\[
K_0,K_1,\ldots, K_M\,,
\]
all separating $H_1$ from $H_2$, all $\epsilon_d$-almost orthogonal to $\gamma$, and such that each $K_i$ intersects $\gamma$ at distance greater than $(1+\sqrt{4d-3})R$ from both $H_1$ and $H_2$.

From each $K_i$ we define an oriented abstract hyperplane component $Z_i$ as follows. After setting $Z_i=K_i\cap Y$, we need to define a non-trivial cohomology class $c_{Z_i}\in\tilde{H}^0(\Wh_{K_i}(Z_i))$ where, as usual, we consider cohomology with coefficients in $\Z/2\Z$. Cutting $Y$ along $K_i$ decomposes it as
\[
Y=Y_1\cup_{K_i} Y_2\,,
\]
and Proposition \ref{prop: Constructing Whitehead complexes by splicing} then tells us that
\[
\Wh_X(Y)=\Wh_X(Y_1)\#_{K_i} \Wh_X(Y_2)\,.
\]
In particular, $\Wh_X(Y)_{\{H_1,H_2\}}$ contains a copy of $L_i=\Wh_{K_i}(Z_i)$, which is canonically isomorphic to the links of $K_i$ in both $\Wh_X(Y_1)$ and $\Wh_X(Y_2)$ by Lemma \ref{lem: Links in Whitehead complexes}. Since $Y$ is a 2-cut, there is a non-trivial class $c_Y$ in the reduced cohomology group $\tilde{H}^0(\Wh_X(Y)_{\{H_1,H_2\}})$. The cohomology class $c_{Z_i}$ is then defined as the pullback of $c_Y$ along the inclusion of $L_i$ into $\Wh_X(Y)_{\{H_1,H_2\}}$. We claim that $c_{Z_i}\neq 0$.

Indeed, Mayer--Vietoris applied to the decomposition
\[
\Wh_X(Y)_{\{H_1,H_2\}}=\Wh_X(Y_1)_{\{H_1,K_i\}}\cup_{L_i}\Wh_X(Y_2)_{\{K_i,H_2\}}
\]
shows that the pullback $b$ of $c_Y$ to $\tilde{H}^0(\Wh_X(Y_1)_{\{H_1,K_i\}})$ (without loss of generality) is non-zero. But $c_{Z_i}$ is the pullback of $b$ to $\tilde{H}^0(L_i)$, so if $c_{Z_i}=0$ then, by Mayer--Vietoris applied to the decomposition
\[
\Wh_X(Y_1)_{H_1}=\Wh_X(Y_1)_{\{H_1,K_i\}}\cup_{L_i} C(L_i)\,,
\]
we see that $b$ is the image of a non-trivial cohomology class in $\tilde{H}^0(\Wh_X(Y_1)_{H_1})$, contradicting the hypothesis that $X$ has no 1-cuts. Hence, $c_{Z_i}\neq 0$ as claimed.

We next prove a bound on the diameter of any $Z_i$. Indeed, let $x$ be the point at which $K_i$ intersects $\gamma$.  Since $Y$ is contained in the closed $R$-neighbourhood of $H_1\cup\gamma\cup H_2$, $Z_i$ is the union of two closed subsets:
\[
Z_i = P\cup Q
\]
where $P=Z_i\cap( N_R(H_1)\cup N_R(H_2))$ and $Q=Z_i\cap N_R(\gamma)$.

By Lemma \ref{lem: Sine rule estimate}, the distance from any $y\in Q$ to $x$ is bounded above by
\[
\frac{R}{\sin\epsilon_d}= \frac{R}{\sin\cot^{-1}(2\sqrt{d-1})} = R\sqrt{4d-3}\,.
\]
We next argue that $P$ is empty. Indeed, if not then there is $y\in P\cap Q$, since $Z_i$ is connected and $P$ and $Q$ are both closed. But such a $y$ is within distance $R$ of $H_1\cup H_2$ and within $R\sqrt{4d-3}$ of $x$, so $K_i$ intersects $\gamma$ at distance less than $R(1+\sqrt{4d-3})$ from either $H_1$ or $H_2$, contradicting the construction of the $K_i$ above. Thus, $P$ is indeed empty, so $Z_i=Q$ and $\diam(Z_i)=\diam(Q) \leq 2R\sqrt{4d-3}$. 

Therefore, by the pigeonhole principle and the choice of $M$, there exist distinct $i<j$ such that $Z_i$ and $Z_j$ are isomorphic oriented abstract hyperplane components. Cutting $Y$ along $K_i$  and $K_j$ decomposes it as
\[
Y=Y_1\cup_{K_i} Y'\cup_{K_j} Y_2\,.
\]
Applying Lemma \ref{lem: Decomposing cut sets} twice, $Y'$ is a 2-cut between $K_i$ and $K_j$, while $Z_i$ and $Z_j$ are the corresponding bounding hyperplane components. 

Finally, note that the inclusions of $L_i=\Wh_{K_i}(Z_i)$ and $L_j=\Wh_{K_j}(Z_j)$ into $\Wh_X(Y)_{\{H_1,H_2\}}$ factor through the inclusion of $\Wh_{X}(Y')_{\{K_i,K_j\}}$. Therefore, setting $c_{Y'}$ to be the pullback of $c_Y$ to $\Wh_{X}(Y')_{\{K_i,K_j\}}$, we see that $c_{Y'}$ pulls back to both $c_{Z_i}$ and $c_{Z_j}$. This completes the proof that $Y'$ is a periodic 2-cut.
\end{proof}

\subsection{From \texorpdfstring{$2$}{2}-cuts to \texorpdfstring{$k$}{k}-cuts}

In the previous section, we proved a uniform bound on the widths of 2-cuts. In this section, we will upgrade this to a uniform bound on the widths of $k$-cuts, where the bound is independent of $k$.  The key result is Lemma \ref{lem: Quasitree lemma}, which encapsulates the fact that a finite collection of hyperplanes coarsely has the structure of a quasi-tree. I am grateful to Mark Hagen for pointing out that this lemma should be true, and that it would simplify the proof of my argument.

The proof of Lemma  \ref{lem: Quasitree lemma} is inspired by the projection-complex machinery of Bestvina--Bromberg--Fujiwara \cite{bestvina_constructing_2015}\footnote{Anthony Genevois has communicated to me a shorter, direct proof of the lemma that avoids the Bestvina--Bromberg--Fujiwara machinery \cite{genevois_personal_2024}.}. Consider a collection $\V=\{V_i\}$ of convex subcomplexes of a hyperbolic, CAT(0) cube complex $\univ{X}$. (In  $\cite{bestvina_constructing_2015}$ this collection would typically be infinite. For us it will be finite, but of unbounded cardinality.) For each $U\in\V$, we consider the closest-point projection $\pi_U:\univ{X}\to U$. For each $U$, Bestvina--Bromberg--Fujiwara use this to define a \emph{distance function} on pairs of elements of $\V\smallsetminus\{U\}$ via
\[
d^\pi_U(V,W):=\diam(\pi_U(V)\cup\pi_U(W))\,.
\]
They go on to state \emph{projection complex axioms} for the distance functions $d^\pi_\bullet$, with respect to a fixed constant $\theta>0$; see \cite[\S1.3]{bestvina_acylindrical_2019} for a concise statement of the axioms.  The next proposition asserts that, as long as the elements of $\V$ are pairwise sufficiently distant from each other, the projection complex axioms are satisfied.

\begin{proposition}\label{prop: Projection complex axioms}
Let $\univ{X}$ be a $\delta$-hyperbolic CAT(0) cube complex and let $\V$ be a finite set of convex subcomplexes of $\univ{X}$.
There are constants $C$ and $\theta>0$, depending only on $\univ{X}$, such that, as long as the elements of $\V$ are pairwise at distance at least $C$, the following hold for all  distinct $U,V,W,Y\in\V$:
\begin{enumerate}[(i)]
\item $d^\pi_U(V,W)=d^\pi_U(W,V)$;
\item $d^\pi_U(V,W)+d^\pi_U(W,Y)\geq d^\pi_U(V,Y)$;
\item $d^\pi_U(V,V)\leq\theta$;
\item if $d^\pi_U(V,W)>\theta$ then $d^\pi_V(U,W)\leq\theta$;
\item $\#\{Z\in\V \mid d^\pi_Z(V,W)>\theta\}<\infty$
\end{enumerate}
\end{proposition}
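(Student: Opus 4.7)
The plan is to verify the five axioms in turn, separating the easy combinatorial ones from the substantive geometric ones. Axiom (i) is immediate from the symmetric definition of $\diam$, and axiom (v) is trivial in our setting because $\V$ is finite by hypothesis. Axiom (ii) follows from the triangle inequality in $\univ{X}$: given $a\in\pi_U(V)$, $c\in\pi_U(Y)$, and any fixed $b\in\pi_U(W)$, we have $d(a,c)\leq d(a,b)+d(b,c)\leq d^\pi_U(V,W)+d^\pi_U(W,Y)$, and maximising over $a,c$ gives the bound.

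The substantive content lies in axioms (iii) and (iv), which are the standard bounded-projection and Behrstock-type inequalities for closest-point projection onto convex subsets of a $\delta$-hyperbolic geodesic space. Both would be deduced from a single classical hyperbolic lemma: there is a constant $B=B(\delta)$ such that, whenever $U,V$ are convex subcomplexes of $\univ{X}$ with $d(U,V)\geq B$, the set $\pi_U(V)$ has $\ell^2$-diameter at most $B$, and any geodesic from a point of $V$ to a point of $U$ passes within $B$ of $\pi_U(V)$. This is proved by the familiar thin-quadrilateral argument: two geodesics landing on widely separated points of $\pi_U(V)$ would cobound a long, thin rectangle whose two long sides fellow-travel the distance-realising geodesic between $U$ and $V$, contradicting minimality of the latter.

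With this lemma in hand, set $C$ to a sufficiently large multiple of $B$ and take $\theta$ correspondingly large. Axiom (iii) is then immediate. For axiom (iv), suppose $d^\pi_U(V,W)>\theta$, witnessed by points $u_V\in\pi_U(V)$ and $u_W\in\pi_U(W)$ far apart in $U$. The second half of the lemma forces any geodesic from a point of $V$ to a point of $W$ to pass within $B$ of both $u_V$ and $u_W$, and hence to travel a long way along a bounded neighbourhood of $U$. Projecting this geodesic onto $V$ and invoking the coarse contraction of the projection map in hyperbolic spaces, the images $\pi_V(U)$ and $\pi_V(W)$ are forced into a common bounded region of $V$, giving a bound of the form $B+O(\delta)$ on $d^\pi_V(U,W)$. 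Choosing $\theta$ large enough to absorb these additive errors yields the Behrstock inequality. The only real obstacle is the bookkeeping: one must calibrate $C$ and $\theta$ once and for all so that a single threshold serves every distinct quadruple in $\V$, but this is a routine consequence of the fact that the hyperbolic projection constants depend only on $\delta$ and on $\univ{X}$, not on $\V$.
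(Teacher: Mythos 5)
Your proposal is correct and follows essentially the same route as the paper. The paper disposes of (i), (ii), and (v) exactly as you do, then simply cites (iii) as the standard contracting property (noting that $\theta \geq 8\delta$ and $C \geq 2\delta$ suffice, a routine exercise) and (iv) as the Behrstock inequality, pointing to Sisto's survey for a sketch. Your thin-quadrilateral sketch of (iii) and your fellow-travelling/coarse-contraction sketch of (iv) are exactly the arguments behind those cited facts, so you are not doing anything genuinely different, just unpacking the references. One small caution on the presentation of (iv): the step ``the images $\pi_V(U)$ and $\pi_V(W)$ are forced into a common bounded region'' elides the usual intermediate move, namely that a geodesic from $v_0\in V$ to $w_0\in W$ passes within $O(\delta)$ of $\pi_U(v_0)$, and one then compares $\pi_V(w_0)$ with $\pi_V(\pi_U(v_0))$ via the contraction of projections along segments that stay far from $V$; stated as you have it, a reader might not see why the geodesic controls both projections simultaneously. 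This is a matter of exposition rather than a gap, and the conclusion is sound.
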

\begin{proof}

Properties (i) and (ii) are obvious, while property (v) is immediate by hypothesis since $\V$ is assumed to be finite. Property (iii) is known as the \emph{contracting property} for quasi-convex subspaces of $\delta$-hyperbolic spaces, and a standard exercise shows that it holds for any $\theta\geq 8\delta$ as long as $C\geq 2\delta$. Finally, property (iv) is known as the \emph{Behrstock inequality}. A sketch is given in \cite[p.\ 133]{sisto_what_2019}.
\end{proof}

In order to construct a quasitree, it is necessary to adjust the distance functions $d^\pi_\bullet$ to strengthen the Behrstock inequality. For this, we follow Bestvina--Bromberg--Fujiwara--Sisto \cite{bestvina_acylindrical_2019}. From $d^\pi_\bullet$, they construct a new family of distance functions $d_\bullet$ with improved properties.

\begin{proposition}\label{prop: Improved projection complex axioms}
For $d^\pi_\bullet$ and $\theta$ as above, there are functions $d_\bullet$ such that 
\[
|d^\pi_\bullet-d_\bullet|\leq 2\theta\,,
\]
and that satisfy the following axioms for all distinct $U, V,W,Y\in\V$:
\begin{enumerate}[(i)]
\item $d_U(V,W)=d_U(W,V)$;
\item $d_U(V,W)+d_U(W,Y)\geq d_U(V,Y)$;
\item $d_U(V,V)\leq 11\theta$;
\item if $d_U(V,W)> 11\theta$ then $d_W(V,Y)=d_W(U,Y)$ unless $Y=W$;
\item $\#\{Z\in\V \mid d_Z(V,W)>\theta\}<\infty$.
\end{enumerate}
\end{proposition}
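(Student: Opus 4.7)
The plan is to invoke directly the modification procedure of Bestvina--Bromberg--Fujiwara--Sisto \cite{bestvina_acylindrical_2019}, which takes as input a family of distance-like functions satisfying the projection complex axioms of Proposition \ref{prop: Projection complex axioms} with constant $\theta$ and outputs modified functions $d_\bullet$ obeying the strengthened axioms with constant $11\theta$ and the additive bound $|d^\pi_U - d_U| \le 2\theta$. Roughly, for each $U$ one defines $d_U(V,W)$ as an infimum of sums of $d^\pi_U$-distances taken over finite chains in $\V \cup \{V,W\}$ that respect the coarse ``order'' induced by the pairs with large projection (a set that is finite by axiom (v) of Proposition \ref{prop: Projection complex axioms}), and one then checks that this infimum differs from $d^\pi_U(V,W)$ by at most $2\theta$.

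Given the construction, properties (i), (ii), (iii) and (v) follow by routine bookkeeping from the corresponding properties of $d^\pi_\bullet$. Symmetry is built into the definition; the triangle inequality passes through the infimum of chain-sums; the diameter bound combines axiom (iii) of Proposition \ref{prop: Projection complex axioms} with the $2\theta$-control; and finiteness in (v) follows because the set of $Z$ with $d_Z(V,W) > \theta$ differs from the corresponding set for $d^\pi_Z$ by at most a finite set.

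The main obstacle, and the whole reason for performing the modification, is axiom (iv): promoting the bounded error in the Behrstock inequality to exact equality $d_W(V,Y) = d_W(U,Y)$. Starting from $d_U(V,W) > 11\theta$, the bound $|d^\pi - d| \le 2\theta$ forces $d^\pi_U(V,W) > 9\theta$, and axiom (iv) of Proposition \ref{prop: Projection complex axioms} then gives $d^\pi_V(U,W) \le \theta$ and $d^\pi_W(U,V) \le \theta$. Geometrically, $U$ lies strictly between $V$ and $W$ in the order on $\V$ induced by large projections, so the chains computing $d_W(V,Y)$ and $d_W(U,Y)$ must both begin by passing through the same ``next'' element, and after that initial step their remaining sub-chains coincide in the optimisation. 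Establishing this coincidence rigorously is the technical content of \cite[Section 4]{bestvina_acylindrical_2019}, which applies verbatim here since Proposition \ref{prop: Projection complex axioms} supplies all the required input.
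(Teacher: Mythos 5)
Your proposal is correct and takes essentially the same approach as the paper, which simply cites \cite[Proposition 4.14]{bestvina_acylindrical_2019} for the entire statement; your sketch of the chain-infimum construction and the promotion of the Behrstock inequality is a fair account of what that reference does.
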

\begin{proof}
This is \cite[Proposition 4.14]{bestvina_acylindrical_2019}.
\end{proof}

\begin{remark}\label{rem: Strengthened Behrstock inequality}
Proposition \ref{prop: Improved projection complex axioms}(iv) should be thought of as a strengthened version of the Behrstock inequality. Indeed, taking $Y=V$ in Proposition \ref{prop: Improved projection complex axioms}(iv) shows that, if $d_U(V,W)>11\theta$, then $d_W(U,V)=d_W(V,V)\leq 11\theta$ by Proposition \ref{prop: Improved projection complex axioms}(iii).
\end{remark}

Bestvina--Bromberg--Fujiwara--Sisto use the strengthened projection complex axioms to construct an infinite quasitree $\P_K(\V)$. In our situation, $\P_K(\V)$ will be finite, and we will use it to find $U\in\V$ onto which the diameter of the projection of all the other elements of $\V$ is bounded.

\begin{definition}[Projection complex]\label{def: Projection complex}
For any $K>0$, the \emph{projection complex} $\P_{K}(\V)$ is a graph with vertex set $\V$. Distinct vertices $U,V$ are joined by an edge if $d_W(U,V)\leq K$ for all $W\neq U,V$.
\end{definition}

For large enough $K$, they go on to define a \emph{standard path} in $\P_{K}(\V)$ between any $U,V\in\V$; we will let $[U,V]$ denote this standard path.  Similarly, we will let $(U,V)$ denote $[U,V]\smallsetminus\{U,V\}$, etc. (In \cite{bestvina_acylindrical_2019}, the set of vertices in $(U,V)$ is denoted by $\mathbf{Y}_K(U,V)$.)  We summarise the properties of standard paths in the following proposition.

\begin{proposition}[Properties of standard paths]\label{prop: Properties of standard paths}
Suppose $K\geq 33\theta$. Let $U,V\in \V$. The \emph{standard path} $[U,V]$ is an embedded interval in $\P_K(\V)$ between $U$ and $V$. Furthermore, the following hold.
\begin{enumerate}[(i)]
\item \label{item: Path compatibility} If $U',V'\in [U,V]$ then $[U',V']\subseteq [U,V]$.
\item \label{item: Standard paths are quasi-geodesics} If $n$ is the length of the standard path $[U,V]$ then the shortest path in $\P_{K}(\V)$ from $U$ to $V$ has length at least $\lfloor\frac{n}{2}\rfloor+1$.
\item \label{item: Projections depend on paths} If $W\in (U,V]$ then $d_U(V,W)\leq K$.
\end{enumerate}
\end{proposition}
\begin{proof}
The proofs of \cite[\S2--3]{bestvina_acylindrical_2019} use a parameter $\theta$ that satisfies the \emph{strong projection axioms}. In our context, the strong projection axioms are provided by Proposition \ref{prop: Improved projection complex axioms}, so we must replace $\theta$ by $11\theta$.

The vertices of $(U,V)$ consist of those $W\in\V$ such that $d_W(U,V)>K$. Setting $W_1<W_2$ if $d_{W_1}(U,W_2)>K$ defines a total order on the vertices of $(U,V)$ by \cite[Proposition 2.3]{bestvina_acylindrical_2019}, to which we may add $U$ as the least element and $V$ as the greatest element; \cite[Lemma 3.1]{bestvina_acylindrical_2019} asserts that these are indeed the vertices of an embedded path.

Item (\ref{item: Path compatibility}) now follows immediately from \cite[Corollary 2.5]{bestvina_acylindrical_2019} and item (\ref{item: Standard paths are quasi-geodesics}) is \cite[Lemma 3.7]{bestvina_acylindrical_2019}. For item (\ref{item: Projections depend on paths}), there is nothing to prove if $W=V$; otherwise $d_W(U,V)>K>11\theta$ so $d_U(V,W)\leq 11\theta < K$ by Remark \ref{rem: Strengthened Behrstock inequality}.
\end{proof}

Bestvina--Bromberg--Fujiwara--Sisto also provide a very precise description of triangles of standard paths.

\begin{lemma}[Triangles of standard paths]\label{lem: Triangles of standard paths}
Suppose $K\geq 33\theta$. Let $U,V,W\in\V$. The complement
\[
[U,V]\smallsetminus ([U,W]\cup[W,V])
\]
in $\P_{K}(\V)$ is connected, and contains at most two vertices.
\end{lemma}
\begin{proof}
This is \cite[Lemma 3.6]{bestvina_acylindrical_2019}.
\end{proof}

With these facts in hand, we can prove our desired bound on diameters of projections.

\begin{lemma}\label{lem: Uniformly bounded projections}
Let $\univ{X}$ be a $\delta$-hyperbolic CAT(0) cube complex and let $\V$ be a finite set of unbounded convex subcomplexes of $\univ{X}$ with at least two elements. There is a constant $C$, depending only on $\univ{X}$, such that, as long as the elements of $\V$ are pairwise at distance at least $C$, there is $U\in \mathbb{Y}$ such that 
\[
\diam\left(\bigcup_{V\in\V\smallsetminus U}\pi_U(V)\right) \leq 400\theta\,.
\]
\end{lemma}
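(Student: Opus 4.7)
The conclusion will be proved by finding a ``leaf'' of the quasi-tree structure on $\V$ supplied by Proposition \ref{prop: Improved projection complex axioms}. The case $|\V| \leq 1$ being vacuous, assume $|\V| \geq 2$; since $|d^\pi_\bullet - d_\bullet| \leq 2\theta$, it suffices to produce $U \in \V$ such that $d_U(V_1, V_2) \leq 11\theta$ for all distinct $V_1, V_2 \in \V \setminus \{U\}$.

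I would set up a ``betweenness'' relation associated with a chosen base point. Fix $V_0 \in \V$ and define an asymmetric relation $\prec$ on $\V$ by declaring $V_0 \prec U$ for every $U \neq V_0$ and, for distinct $W, U \in \V \setminus \{V_0\}$, $W \prec U$ iff $d_W(V_0, U) > 11\theta$. Asymmetry follows directly from the strengthened Behrstock inequality: applying Proposition \ref{prop: Improved projection complex axioms}(iv) to $d_W(V_0, U) > 11\theta$ with $Y = V_0$ gives $d_U(V_0, W) = d_U(V_0, V_0) \leq 11\theta$ by axiom (iii), so $U \not\prec W$. Heuristically, $W \prec U$ records that $W$ lies between $V_0$ and $U$ in the quasi-tree, so maximal elements under $\prec$ play the role of quasi-tree leaves farthest from $V_0$. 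Since $\V$ is finite, I would take $U^* \in \V \setminus \{V_0\}$ maximising the length of a $\prec$-chain $V_0 \prec W_1 \prec \cdots \prec U^*$, and claim that $U^*$ has the required property.

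To verify the claim, suppose for contradiction that $d_{U^*}(V_1, V_2) > 11\theta$ for some distinct $V_1, V_2 \in \V \setminus \{U^*\}$. If one of them equals $V_0$, the inequality reads $U^* \prec V_i$, extending the maximal chain. If not, applying Proposition \ref{prop: Improved projection complex axioms}(iv) with $Y = V_0$ gives the identities $d_{V_i}(V_0, U^*) = d_{V_i}(V_0, V_{3-i})$ for $i = 1, 2$; if either side exceeds $11\theta$, then $U^* \prec V_i$ and the chain again extends. The main obstacle is the residual ``symmetric'' configuration in which both $d_{V_i}(V_0, U^*) \leq 11\theta$ while $d_{U^*}(V_1, V_2) > 11\theta$; combined with the strengthened Behrstock inequality applied to $d_{U^*}(V_1, V_2) > 11\theta$, this also forces $d_{V_i}(V_0, V_{3-i}) \leq 11\theta$. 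To close this case I would strengthen the maximality hypothesis by letting $V_0$ and $U^*$ vary jointly so that the pair $(V_0, U^*)$ achieves the maximum chain length over all base points; then switching the base point from $V_0$ to $V_1$ should yield a strictly longer chain ending at $U^*$, contradicting the combined maximality. Verifying this base-point switching cleanly using axioms (i), (ii) and (iv) — or, alternatively, closing the residual case by an induction on $|\V|$ via axiom (v) — is the delicate point I expect to require the most care.
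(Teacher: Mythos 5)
Your approach is in the right neighbourhood---like the paper, you reduce to finding $U$ with $d_U(V,W)\leq 11\theta$ for $V,W\neq U$, and you lean on the strengthened Behrstock inequality (Proposition \ref{prop: Improved projection complex axioms}(iv))---but it diverges structurally: you try to isolate a ``leaf'' directly via a maximality argument, whereas the paper runs a clean induction on $\#\V$, adding one element $W_0$ at a time and showing that either the inductively produced $U'$ still works or else $W_0$ itself works. Your proposal has two genuine gaps, the second of which you flag yourself.

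First, there is a direction error in the case $V_1,V_2\neq V_0$. From the identity $d_{V_i}(V_0,U^*)=d_{V_i}(V_0,V_{3-i})$ you observe that if either side exceeds $11\theta$ then ``$U^*\prec V_i$ and the chain again extends.'' But by your own definition, $d_{V_i}(V_0,U^*)>11\theta$ means $V_i\prec U^*$ (``$V_i$ lies between $V_0$ and $U^*$''), not $U^*\prec V_i$. The relation $V_i\prec U^*$ does not allow you to append $V_i$ after $U^*$; at best it says $V_i$ could be inserted somewhere earlier, which requires relations you have not established and does not contradict maximality of the chain ending at $U^*$. (A related unaddressed point: even in the case $V_1=V_0$, where $U^*\prec V_2$ genuinely holds, one must check that $V_2$ is not already one of the $W_j$ in the chain, since $\prec$ is not transitive and a priori the extended ``chain'' could revisit a vertex.) So the case you call ``residual'' is in fact not the only problematic one.

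Second, the residual case itself is left open, as you acknowledge. Your proposed fix of varying the base point $V_0$ is not carried out, and it is not clear it closes cleanly with the given constants: in the residual configuration both $d_{V_i}(V_0,U^*)\leq 11\theta$, so neither $V_i$ is between $V_0$ and $U^*$, and nothing forces a longer chain from $V_1$. Your alternative suggestion---induction on $\#\V$---is essentially what the paper does, and is the cleaner route. The paper fixes $W_0\in\V$, applies the inductive hypothesis to $\V'=\V\smallsetminus\{W_0\}$ to get $U'$, and then shows that if $d_{U'}(V',W_0)>11\theta$ for some $V'$, then applying axiom (iv) twice (together with Remark \ref{rem: Strengthened Behrstock inequality} and the unboundedness of $U'$) gives $d_{W_0}(V,W)\leq 11\theta$ for all $V,W\in\V'$, so $U:=W_0$ works. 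I would recommend you abandon the maximal-chain framework and carry out this induction.
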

\begin{proof}
Let $C$ and $\theta$ be as in Proposition \ref{prop: Projection complex axioms}, and consider the projection complex $\P_{33\theta}(\V)$, defined using the distance functions $d_\bullet$ from Proposition \ref{prop: Improved projection complex axioms}.  Since $\V$ is finite, there are finitely many standard paths; fix $[U,V]$ of maximal length. Let $V'$ be the vertex of $[U,V]$ adjacent to $U$, and let $W'$ be any other vertex of $\P_{33\theta}(\V)$ adjacent to $U$. 

First, note that $U$ is not contained in the standard path from $W'$ to $V$. Indeed, if $U$ were contained in $[W',V]$ then, by Proposition \ref{prop: Properties of standard paths}(\ref{item: Path compatibility}), $[U,V]$ would be a proper sub-path of $[W',V]$, contradicting the choice of $U$.  Note also that the standard path $[U,W']$ is a single edge, by Proposition \ref{prop: Properties of standard paths}(\ref{item: Standard paths are quasi-geodesics}).

Next, consider the triangle of standard paths $[U,W']\cup [W',V]\cup [V,U]$. Combining the facts from the previous paragraph with Lemma \ref{lem: Triangles of standard paths}, we see that $V'$ is joined to $W'$ by an edge-path of length at most 5, avoiding $U$. By the definition of $\P_{33\theta}(\V)$ and induction on the length of this edge-path, it follows that $d_U(V',W')\leq 5\times 33\theta=165\theta$. 

Now consider an arbitrary $W\neq U$ in $\V$ and let $W'$ be the vertex of $[W,U]$ adjacent to $U$. Combining the estimate from the previous paragraph with Proposition \ref{prop: Properties of standard paths}(\ref{item: Projections depend on paths}) gives
\[
d_U(V',W)\leq d_U(V',W') + d_U(W',W)\leq 165\theta+33\theta= 198\theta\,.
\]
It follows from Proposition \ref{prop: Improved projection complex axioms} that $d^\pi_U(V',W)\leq 200\theta$, and so $d^\pi_U(W_1,W_2)\leq 400\theta$ for any $W_1,W_2\neq U$ in $\V$, by the triangle inequality. This is the required bound, by the definition of $d^\pi_\bullet$.
\end{proof}

The geometric lemma that we need can now be easily deduced.

\begin{lemma}\label{lem: Quasitree lemma}
Let $\univ{X}$ be a $d$-dimensional, $\delta$-hyperbolic CAT(0) cube complex. There is a constant $D$ with the following property. Suppose that $\V$ is any finite, non-empty set of unbounded convex subcomplexes of $\univ{X}$ and that $d(U,V)>D$ for every distinct $U,V\in\V$. Then, there exists a hyperplane $H$ and $U\in \V$ such that $H$ separates $U$ from every $V\in\V\smallsetminus\{U\}$.
 \end{lemma}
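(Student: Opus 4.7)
The plan is to use Lemma \ref{lem: Uniformly bounded projections} to identify a ``leaf-like'' element $U\in\V$, then construct the separating hyperplane $H$ as a bounding hyperplane of $U$ adjacent to the bounded projection region inside $U$.

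Set $D$ to be larger than the constant $C$ of Lemma \ref{lem: Uniformly bounded projections}, plus additional slack depending on $d$, $\delta$, and $\theta$. Applying Lemma \ref{lem: Uniformly bounded projections}, I obtain $U\in\V$ such that
\[
B := \bigcup_{V\in\V\smallsetminus\{U\}}\pi_U(V) \subseteq U
\]
has $\ell^2$-diameter at most $13\theta$. By Haglund's Theorem \ref{thm: Haglund's theorem}, the cubical convex hull $\Hull(B)\subseteq U$ has uniformly bounded diameter, so the collection $\mathcal{H}_B$ of bounding hyperplanes of $U$ whose bounding component meets $\Hull(B)$ is finite, with cardinality bounded in terms of $d$, $\delta$, and $\theta$.

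For each $V\in\V\smallsetminus\{U\}$, I would pick a closest point $v_V\in V$ to $U$, set $p_V := \pi_U(v_V) \in B$, and let $H_V\in\mathcal{H}_B$ be the bounding hyperplane of $U$ dual to the first edge of an $\ell^1$-geodesic from $p_V$ to $v_V$. Because $d(V,U)>D$ is large, the distance estimate $d(V,H_V)\geq d(V,U)-1>D-1$ shows that $V$ is disjoint from $H_V$; since $V$ is convex and the point $v_V$ lies in the half-space $H_V^-$ not containing $U$, all of $V$ lies in $H_V^-$.

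It remains to choose a single $H\in\mathcal{H}_B$ such that $V\subseteq H^-$ for every $V\in\V\smallsetminus\{U\}$ simultaneously. For this I would use the strengthened Behrstock inequality of Proposition \ref{prop: Improved projection complex axioms}(iv) together with the hyperbolicity of $\univ{X}$: in the quasi-tree structure on $\V$ produced by the projection-complex machinery, the choice of $U$ supplied by Lemma \ref{lem: Uniformly bounded projections} plays the role of a terminal vertex, so the half-spaces $\{H_V^-\}_{V\neq U}$ admit a common refinement corresponding to a single adjacent ``edge''. In cube-complex terms, this singles out the required $H\in\mathcal{H}_B$ whose half-space $H^-$ contains every $V\neq U$. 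This last step is the main obstacle, and is where hyperbolicity is genuinely used---the analogous statement fails in non-hyperbolic CAT(0) cube complexes like $\mathbb{R}^2$, where one can place $V$'s on opposite sides of a line $U$ with bounded projection---so the argument must exploit the quasi-tree structure rather than the cube-complex structure alone.
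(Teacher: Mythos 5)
Your first half is on track: invoking Lemma \ref{lem: Uniformly bounded projections} to obtain a ``terminal'' element $U$ with bounded projection diameter is exactly how the paper begins. But your last paragraph is not a proof, and you acknowledge as much (``this last step is the main obstacle''). Having associated to each $V$ a candidate hyperplane $H_V$ dual to an edge leaving $U$ toward $V$, you still need to replace the family $\{H_V\}$ by a single hyperplane $H$, and the appeal to the ``quasi-tree structure'' of $\V$ does not in itself single out such an $H$, nor explain why the half-spaces $H_V^-$ should be coherently nested.

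The paper resolves exactly this point by an \emph{enlargement trick} that you are missing. Rather than applying Lemma \ref{lem: Uniformly bounded projections} to $\V$ itself and then hunting for a hyperplane of $U$, it first replaces each $U$ by the convex hull $\widehat{U}$ of the $(\sqrt{d}+r)$-neighbourhood of $U$ (where $r$ is chosen, via Haglund's theorem, so that any set of diameter $\leq 13\theta$ has hull of diameter $\leq r$). Applying Lemma \ref{lem: Uniformly bounded projections} to $\widehat{\V}$ produces $\widehat{U}$ such that $\Hull\bigl(\bigcup_{V\neq U}\pi_{\widehat{U}}(\widehat{V})\bigr)$ has diameter $\leq r$; since this hull contains a point on the frontier of $\widehat{U}$, it lies at distance at least $\sqrt{d}$ from $U$. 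A shortest path from $U$ to this hull therefore crosses a hyperplane $H$, and by Lemma \ref{lem: l2 geodesics and separating hyperplanes} that single $H$ automatically separates $U$ from every $\pi_{\widehat{U}}(V)$, hence from every $V$. The thickening of $U$ and the bound on the diameter of the hull of the projections are what prevent, in one stroke, the scenario you were worried about of different $V$'s requiring incompatible choices of $H_V$ (your $\mathbb{R}^2$ picture is ruled out because it would force the projections to have large diameter, contradicting the conclusion of Lemma \ref{lem: Uniformly bounded projections}).

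Two smaller points. First, the estimate ``$d(V,H_V)\geq d(V,U)-1$'' does not obviously hold: $H_V$ extends far beyond a neighbourhood of $U$, so $V$ could come close to (or even cross) $H_V$ despite being far from $U$. You would need an argument in the spirit of Lemma \ref{lem: l2 geodesics and separating hyperplanes} applied to a distance-minimising geodesic, not an arbitrary $\ell^1$ path from $p_V$ to $v_V$. Second, hyperbolicity is indeed essential, but it is already consumed in Lemma \ref{lem: Uniformly bounded projections}; the final hyperplane-producing step in the paper is pure CAT(0) cube-complex geometry and needs no further input from $\delta$.
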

\begin{proof}
Let $C$ and $\theta$ be as in Lemma \ref{lem: Uniformly bounded projections}. By Theorem \ref{thm: Haglund's theorem}, there is $r$ such that, for any subset $Z\subseteq \univ{X}$ of diameter at most $400\theta$, the diameter of $\Hull(Z)$ is at most $r$.

For each $U\in\V$, let $\widehat{U}$ be the convex hull of the $(\sqrt{d}+r)$-neighbourhood of $U$, which is contained in the $R$-neighbourhood of $U$, for some uniform $R$, by Theorem \ref{thm: Haglund's theorem}. Let $\widehat{\V}:=\{\widehat{U}\mid U\in\V\}$ and let $D=C+2R$. 

As long as the elements of $\V$ are at distance at least $D$ from each other, it follows that the elements of $\widehat{\V}$ are at distance at least $C$ from each other. Hence, by Lemma \ref{lem: Uniformly bounded projections}, there is $\widehat{U}$ such that
\[
\diam\left(\bigcup_{\widehat{V}\in\widehat{\V}\smallsetminus \widehat{U}}\pi_{\widehat{U}}(\widehat{V})\right) \leq 400\theta\,.
\]
Let $W\subseteq\widehat{U}$ be the convex hull of $\bigcup_{\widehat{V}\in\widehat{\V}\smallsetminus \widehat{U}}\pi_{\widehat{U}}(\widehat{V})$, so $\diam W\leq r$.

Since $W$ contains a point on the frontier of $\widehat{U}$, it follows from the bound on the diameter of $W$ and the definition of $\widehat{U}$ that $d(U,W)\geq \sqrt{d}$. In particular, by Lemma \ref{lem: l2 geodesics and separating hyperplanes} there is a hyperplane $H$ that separates $U$ from $W$; note that $H$ intersects $\widehat{U}$ non-trivially by construction.

Finally, consider any $V\neq U$ in $\V$. It remains to prove that $H$ separates $U$ from $V$. Since $H$ separates $U$ from $W$, it suffices to prove that $H$ is disjoint from $V$ and does not separate $W$ from $V$.  Because hyperplanes are convex and $H$ intersects $\widehat{U}$, if $H$ also intersected $V$ then it would intersect $W$, a contradiction. So $H$ is disjoint from $V$.  If $H$ separated $W$ from $V$ then a shortest geodesic $\gamma$ from $V$ to $W$ would cross $H$ by Lemma \ref{lem: l2 geodesics and separating hyperplanes}; but $\gamma$ is also a shortest geodesic from $V$ to $\widehat{U}$, so $H$ would separate $V$ from $\widehat{U}$, contradicting the fact that $H$ intersects $\widehat{U}$.
\end{proof}

We can now upgrade our bound on the widths of 2-cuts to a bound on the widths of $k$-cuts.

\begin{theorem}\label{thm: Uniform bound on k-cuts}
Let $X$ be an essential, $d$-dimensional, hyperbolic cube complex without 0-cuts or 1-cuts. Then $\pi_1(X)$ is cyclically indecomposable if and only if there is a uniform bound, independent of $k$, on the widths of all $k$-cuts with $k\geq 2$.
\end{theorem}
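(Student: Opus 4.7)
The ``if'' direction is immediate from Theorem \ref{thm: Slice length bound}: a uniform bound on the widths of all $k$-cuts for $k \ge 2$ specialises to a bound on $2$-cut widths, hence $\pi_1(X)$ is cyclically indecomposable.

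For the ``only if'' direction, I would work by induction on $k \ge 2$, with the base case $k = 2$ supplied by Theorem \ref{thm: Slice length bound} (yielding a bound $C_2$ on $2$-cut widths). Let $D$ be the constant from Lemma \ref{lem: Quasitree lemma}. For the inductive step, take a $k$-cut $Y$ (for some $k \ge 3$) with minimal cut set $\{H_1, \ldots, H_k\}$ of width $w$, and assume $w > \max(C_2, D)$. The hyperplanes $H_i$ are unbounded convex subcomplexes pairwise at distance at least $w > D$, so Lemma \ref{lem: Quasitree lemma} produces a hyperplane $K$ separating one of them---say $H_1$---from $H_2, \ldots, H_k$. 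The hyperplane $K$ must cross $Y$: otherwise $K$ would separate either $H_1$ or some $H_i$ ($i \ge 2$) from $Y$, contradicting the bounding hyperplane property. Cutting gives $Y = Y_1 \cup_K Y_2$ with $H_1$ bounding $Y_1$ and $H_2, \ldots, H_k$ bounding $Y_2$.

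Proposition \ref{prop: Constructing Whitehead complexes by splicing} then expresses $\Wh_X(Y) = \Wh_X(Y_1) \#_K \Wh_X(Y_2)$, and Lemma \ref{lem: Decomposing cut sets} yields the dichotomy that either (i) $\{H_2, \ldots, H_k\}$ is a cut set of $\Wh_X(Y_2)$, making $Y_2$ a $k'$-cut for some $2 \le k' \le k-1$, or (ii) $\{H_1, K\}$ is a cut set of $\Wh_X(Y_1)$, making $Y_1$ a $2$-cut. In case (i) the inductive hypothesis applies directly to $Y_2$. In case (ii) Theorem \ref{thm: Slice length bound} forces $d(H_1, K) \le C_2$, whence $d(K, H_i) \ge w - C_2$ for all $i \ge 2$; using that the absence of $1$-cuts makes $\Wh_X(Y_1) \setminus \St(H_1)$ connected, a symmetric application of Lemma \ref{lem: Disconnected splicing} also shows that $\{K, H_2, \ldots, H_k\}$ is a cut set of $\Wh_X(Y_2)$. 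One may then iterate the Quasitree--cut procedure on $Y_2$ with $K$ playing the role formerly played by $H_1$, at a cost of $C_2$ in pairwise distances. After at most $k - 2$ iterations the process terminates in a $2$-cut whose width, for $w$ sufficiently large, still exceeds $C_2$, contradicting Theorem \ref{thm: Slice length bound}.

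The main technical obstacle is matching the statement's claim that the final bound is \emph{independent of $k$}. A naive iteration loses an additive $C_2$ per step and so gives a bound growing like $kC_2$. Removing this dependence will likely require either reorganising the iteration so that the case (ii) losses telescope, or exploiting the global tree-like structure coming from the projection-complex axioms behind Lemma \ref{lem: Quasitree lemma}---for instance by arranging that the separating hyperplanes produced at successive Quasitree applications are uniformly close to distinct $H_i$, so that the total accumulated loss is controlled by a single quantity independent of $k$. A subsidiary subtlety, which I would also need to handle, is that in case (i) the minimal cut set of $Y_2$ may in principle involve bounding hyperplanes unrelated to the original $H_i$; one must verify that any new small pairwise distances arising in such a minimal cut set are themselves witnessed by $2$-cuts, and so already controlled by $C_2$.
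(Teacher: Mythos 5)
Your ``if'' direction and your identification of the two-case dichotomy via Lemma \ref{lem: Decomposing cut sets} match the paper, but your ``only if'' direction has a genuine gap, which you correctly diagnose: naively iterating the quasitree/cut procedure loses an additive $C_2$ each time, so the bound you obtain grows with $k$. Neither of your two proposed fixes (telescoping the case (ii) losses, or arranging the separating hyperplanes to be uniformly close to distinct $H_i$) is the route the paper takes.

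The paper removes the $k$-dependence with two moves that you are missing. First, it does \emph{not} apply Lemma \ref{lem: Quasitree lemma} to the $H_i$ themselves, but rather to the fattened subcomplexes $U_i=\Hull\left(N_C(H_i)\right)$, where $C$ is the $2$-cut bound from Theorem \ref{thm: Slice length bound}. By Theorem \ref{thm: Haglund's theorem} these hulls sit inside $R$-neighbourhoods of the $H_i$ for some uniform $R$, so if the width of $Y$ exceeds $2R+D$ the $U_i$ are still pairwise $D$-separated and Lemma \ref{lem: Quasitree lemma} applies; the separating hyperplane $K$ it produces is then automatically at $\ell^2$-distance at least $C$ from $H_1$. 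This kills your case (ii) immediately: if $\{H_1,K\}$ were a cut set of $\Wh_X(Y_1)$, then $Y_1$ would be a $2$-cut of width $\geq C$, contradicting the defining property of $C$. Second, instead of inducting upward on $k$, the paper takes $k$ \emph{minimal} such that there is a $k$-cut of width $>2R+D$; then in your case (i), $\{H_2,\ldots,H_k\}$ would be a cut set of $\Wh_X(Y_2)$ with pairwise distances still $>2R+D$, contradicting minimality of $k$ without any iteration. The whole argument is thus a single application of the quasitree lemma followed by a one-step contradiction, and the resulting bound $2R+D$ has no $k$-dependence because no losses ever accumulate.

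Incidentally, your observation that $d(H_1,K)\le C_2$ in case (ii) is exactly the symptom the paper is engineering around: by pre-fattening the hyperplanes by $C$ before invoking the quasitree lemma, the paper guarantees $d(H_1,K)\ge C$, turning the very inequality that blocked you into an immediate contradiction.
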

\begin{proof}
If there is a bound on the widths of $k$-cuts for all $k\geq 2$ then, in particular, there is a bound on the widths of all $2$-cuts. Therefore, $X$ is cyclically essential and $\pi_1(X)$ is cyclically indecomposable by Theorem \ref{thm: Slice length bound}.

For the converse implication, let $C$ be the bound from Theorem \ref{thm: Slice length bound}. By Theorem \ref{thm: Haglund's theorem}, there is $R$ such that the hull of the $C$-neighbourhood of any convex subset is contained in the $R$ neighbourhood of that subset. Let $D$ be the constant from Lemma \ref{lem: Quasitree lemma}.

Let $Y$ be a $k$-cut between a collection of $k\geq 2$ hyperplanes $\{H_1,\ldots H_k\}$, and suppose that the width of $Y$ is greater than $2R+D$.  Suppose, furthermore, that $k$ is minimal such that there exists a $k$-cut of width greater than $2R+D$.

For each $i$, let $U_i$ be the convex hull of the $C$-neighbourhood of $H_i$. Then the $U_i$ are pairwise at distance at least $D$. 

By Lemma \ref{lem: Quasitree lemma}, there is a hyperplane $K$ that, without loss of generality, separates $U_1$ from the remaining $H_i$. In particular, $K$ is at distance at least $C$ from $H_1$ and separates all the other $H_i$ from $H_1$. Thus, cutting $Y$ along $K$ gives
\[
Y=Y_1\cup_K Y_2
\]
where $H_1$ bounds $Y_1$ and the remaining $H_i$ all bound $Y_2$.

By the minimality of $k$, $\{H_2,\ldots, H_k\}$ is not a cut set in $\Wh_X(Y_2)$. Therefore, Lemma \ref{lem: Decomposing cut sets} implies that $Y_1$ is a 2-cut between $H_1$ and $K$, which is of width at least $C$ by construction. By the choice of $C$, it follows that  $\pi_1(X)$ is cyclically decomposable, as required.
\end{proof}

\section{One-ended subgroups}\label{sec: One-ended subgroups}

In this section, we prove both our main theorem about cubulated hyperbolic groups and its corollary about one-relator groups.

\subsection{Cubulated hyperbolic groups}

With the uniform bound on the widths of cuts in hand, it is easy to prove the main theorem in the cyclically indecomposable case.

\begin{theorem}\label{thm: Subgroups in the rigid case}
Let $X$ be a hyperbolic cube complex. If $\pi_1(X)$ is one-ended and cyclically indecomposable then it has a one-ended quasiconvex subgroup of infinite index.
\end{theorem}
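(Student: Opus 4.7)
My plan follows the outline sketched in the introduction. First I normalise $X$: by Corollary \ref{cor: No 1-cuts}, I may replace $X$ by a compact, essential, non-positively curved cube complex with the same fundamental group and with neither $0$-cuts nor $1$-cuts. Since $\pi_1(X)$ is hyperbolic and cyclically indecomposable, Theorem \ref{thm: Uniform bound on k-cuts} then supplies a uniform constant $C$, independent of $k$, bounding the widths of all $k$-cuts ($k\geq 2$) in $X$. Next, Agol's theorem implies that $\pi_1(X)$ is virtually special, so the stabiliser of any hyperplane $H\subseteq\univ{X}$ is separable. Only finitely many cosets $g\Stab(H)\neq\Stab(H)$ satisfy $d(gH,H)\leq C$, so separability lets me pass to a finite-index subgroup $G_0\leq\pi_1(X)$ containing $\Stab(H)$ and avoiding every such coset. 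The corresponding finite-sheeted cover $X_0\to X$ then has the key property that every $G_0$-translate of $H$ other than $H$ itself lies at $\univ{X}$-distance greater than $C$ from $H$.

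Now cut $X_0$ along (the embedded image of) $H$ and let $X'$ be a connected component of the resulting complex. Then $X'$ is compact and non-positively curved, with an induced local isometry to $X_0$; by Lemma \ref{lem: Local isometries are pi1 injective}, $\pi_1(X')\hookrightarrow G_0$, and $\univ{X'}$ is realised as a convex subcomplex of $\univ{X_0}=\univ{X}$ on which $\pi_1(X')$ acts cocompactly -- so $\pi_1(X')$ is cubically convex-cocompact and hence quasiconvex. Note that $\pi_1(X')$ is infinite, since it contains the stabiliser of the chosen half-space of $H$, a subgroup of $\Stab(H)$ of index at most two; and $\Stab(H)$ is itself infinite because $X$ is essential and one-ended. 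For infinite index, I argue via hyperbolicity: the limit set of $\pi_1(X')$ in the Gromov boundary of $G_0$ is contained in the boundary of the chosen half-space, and so misses the boundary points approached from the other side of $H$, which are non-empty because $H$ is essential.

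It remains to show $\pi_1(X')$ is one-ended. Suppose for contradiction it is not; then, since $\pi_1(X')$ is infinite, $\univ{X'}$ has at least two ends, so there is a compact subset whose complement has at least two unbounded components. Replacing it by its cubical convex hull (Theorem \ref{thm: Haglund's theorem}) produces a compact convex $Y'\subseteq\univ{X'}$ with $\Wh_{X'}(Y')$ non-empty and disconnected (by Lemma \ref{lem: Topological type of Whitehead complex}). View $Y'$ as a compact convex subcomplex of $\univ{X}$: its bounding hyperplanes in $\univ{X}$ consist of those already bounding $Y'$ in $\univ{X'}$ together with a finite set $B$ of $G_0$-translates of $H$ whose cubical neighborhoods abut $Y'$ from outside $\univ{X'}$. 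Iterated application of Proposition \ref{prop: Constructing Whitehead complexes by splicing} along the elements of $B$ identifies $\Wh_{X'}(Y')$ with $\Wh_X(Y')\smallsetminus\St(B)$, so $B$ is a cut set in $\Wh_X(Y')$. Any two distinct elements of $B$ are $G_0$-translates of $H$ other than $H$ itself under some common reference, so lie at $\univ{X}$-distance greater than $C>0$ and are in particular disjoint, hence non-adjacent in $\Wh_X(Y')$. Extracting a minimal sub-cut-set of $B$ exhibits $Y'$ as a $k$-cut in $X$ of width greater than $C$, contradicting Theorem \ref{thm: Uniform bound on k-cuts}. The main obstacle is the Whitehead-complex bookkeeping in this final step -- carefully verifying that iterated cutting along the extra translates of $H$ transforms $\Wh_X(Y')$ into exactly $\Wh_{X'}(Y')$; the infinite-index argument is a secondary subtlety, but follows cleanly from hyperbolicity and essentiality.
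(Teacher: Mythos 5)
Your proposal follows the paper's proof essentially step for step: normalise $X$ via Corollary \ref{cor: No 1-cuts}, invoke Theorem \ref{thm: Uniform bound on k-cuts} for the uniform width bound $C$, use separability of $\Stab(\univ{H})$ (Agol, Haglund--Wise) to pass to a finite cover $X_0$ in which translates of $\univ{H}$ are $>C$ apart, cut along $H$, and then argue that a $0$-cut in $X'$ would yield a $k$-cut ($k\geq 2$) in $X$ of width $>C$, contradicting the bound. Two minor remarks: for infinite index you reach for limit sets and essentiality of $H$, whereas the paper observes more directly that $\pi_1(X')$ is a vertex group of a non-trivial graph-of-groups decomposition of $G_0$ and hence has infinite index -- both work, but the latter avoids invoking the boundary; and the identification $\Wh_{X'}(Y')\cong\Wh_X(Y')\smallsetminus\St(B)$ is not an application of Proposition \ref{prop: Constructing Whitehead complexes by splicing} (that proposition decomposes $\Wh_X(Y)$ when $Y$ itself is cut along a hyperplane, keeping the ambient complex fixed), but rather a direct comparison of bounding hyperplanes of $Y'$ in the convex subcomplex $\univ{X'}$ versus in $\univ{X}$, which is what the paper implicitly uses.
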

\begin{proof}
By Corollary \ref{cor: No 1-cuts},  we may without loss of generality assume that $X$ is essential, without 0- or 1-cuts. Let $C$ be the uniform bound on widths of $k$-cuts given by Theorem \ref{thm: Uniform bound on k-cuts}.

Choose any hyperplane $H$ of $X$, and fix an elevation $\univ{H}$ to the universal cover $\univ{X}$. Because $\pi_1 H$ is separable in $G=\pi_1X$ by the work of Agol \cite{agol_virtual_2013} and Haglund--Wise \cite{haglund_special_2008}, there is a finite-index subgroup $G_0$ of $G$ such that
\[
d(\univ{H},g\univ{H})>C
\]
for any $g\in G_0\smallsetminus\Stab(\univ{H})$.

Let $X_0$ denote the quotient $G_0\backslash \univ{X}$ and let $H_0$ be the image of $\univ{H}$, which is a hyperplane of $X_0$. Let $X'$ be (a component of) the result of cutting $X_0$ along $H_0$. Since $X'$ is finite and maps locally isometrically to $X$, $\pi_1(X')$ is a quasiconvex subgroup of $G$. Furthermore, since $\pi_1(X')$ is a vertex-group of a non-trivial graph-of groups decomposition for $G_0$, it has infinite index in $G_0$, and hence in $G$. It remains to show that $\pi_1(X')$ is one-ended.

With a view to a contradiction, suppose that $\pi_1(X')$ is freely decomposable. By Proposition \ref{prop: Detecting free splittings using Whitehead graphs}, there is a 0-cut $Y\to X'$. But $\Wh_{X'}(Y)$ is obtained from $\Wh_X(Y)$ by deleting the $G_0$-translates of $\univ{H}$, so some finite set of translates
\[
g_1\univ{H},\ldots, g_k\univ{H}
\]
(with $g_i\in G_0$) defines a cut set in $\Wh_X(Y)$. Since $\Wh_X(Y)$ has no 0-cut or 1-cuts, $k\geq 2$. Therefore $Y$ is a $k$-cut in $\univ{X}$ of width greater than $C$, contradicting the choice of $C$.
\end{proof}

By passing to the Grushko and Bowditch JSJ decompositions, the main theorem follows.

\begin{proof}[Proof of Theorem \ref{thm: Main theorem for cubulated hyperbolic groups}]
Since $\pi_1(X)$ is a finitely generated, torsion-free group, it is freely indecomposable if and only if it is one-ended, by Stallings' theorem \cite{stallings_torsion-free_1968}. If $\pi_1(X)$ is freely decomposable, Grushko's theorem implies that either $\pi_1(X)$ is free or it has a one-ended free factor, which is necessarily a quasiconvex subgroup of infinite index. Therefore, we may assume that $\pi_1(X)$ is itself one-ended.

Consider the Bowditch JSJ decomposition of $\pi_1(X)$ \cite{bowditch_cut_1998}.  If the decomposition has only one vertex and no edges, then that vertex is either of surface or of rigid type. In the former case, $\pi_1(X)$ is a surface group. In the latter case, $\pi_1(X)$ is cyclically indecomposable, so it has a one-ended, quasiconvex subgroup of infinite index by Theorem \ref{thm: Subgroups in the rigid case}.

Now consider the case where the JSJ decomposition is non-trivial. Every vertex group $V$ is quasiconvex \cite[Proposition 1.2]{bowditch_cut_1998}, and in particular finitely generated and hyperbolic. If some vertex group $V$ is not itself free, then by Grushko's theorem it has a one-ended free factor $V'$, which provides a one-ended, quasiconvex subgroup, necessarily of infinite index in $\pi_1(X)$. On the other hand, if every vertex group is free, then $\pi_1(X)$ is either a surface group or contains a one-ended, quasiconvex subgroup by \cite[Theorem 3]{wilton_one-ended_2011}.
\end{proof}

\subsection{One-relator groups}

Combining the result for cubulated hyperbolic groups with recent joint work of the author with Louder \cite{louder_negative_2022,louder_uniform_2024}, and also work of Linton \cite{linton_one-relator_2025} and Gardam--Kielak--Logan \cite{gardam_surface_2023}, we can also answer Question \ref{qu: General subgroup question} for one-relator groups. The key is Puder's notion of \emph{primitivity rank} \cite{puder_primitive_2014}.

\begin{definition}[Primitivity rank]\label{def: Primitivity rank}
Let $w$ be a non-trivial element of a free group $F$. The \emph{primitivity rank} $\pi(w)$ is the minimal rank of a subgroup $w\in H\subseteq F$
 such that $w$ is not a primitive element of $H$.
 \end{definition}
 
By convention, $\min\varnothing =\infty$, so $\pi(w)=\infty$ if and only if $w$ is primitive in $F$ (and hence in every subgroup containing it). As a result of this convention,  $\pi(w)\geq 1$ for all $w$. (It can be convenient to extend the definition of $\pi$ to the trivial element $1\in F$ and set $\pi(1)=0$.)
 
It has recently been discovered that $\pi(w)$ determines several features of the one-relator group $G=F/\llangle w\rrangle$. These results can be summarised in the following theorem.

\begin{theorem}\label{thm: Structure of one-relator groups}
Let $F$ be free, $w$ a non-trivial element of $F$ and $G=F/\llangle w\rrangle$. Then:
\begin{enumerate}[(i)]
\item $\pi(w)=\infty$ if and only if $G$ is free;
\item $\pi(w)=1$ if and only if $G$ has torsion;
\item if $\pi(w)=2$ then $G$ has a two-generator, one-relator subgroup $P$ that is not free;
\item if $\pi(w)>2$ then $G$ is hyperbolic and cubulated.
\end{enumerate}
\end{theorem}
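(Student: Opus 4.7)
The plan is to verify the four items in sequence, handling (i) and (ii) via classical one-relator theory, (iii) via a Magnus-subgroup argument relying on a recent refinement, and (iv) by direct citation of recent deep results; primitivity rank serves as the organising invariant throughout.

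Items (i) and (ii) are classical. For (i), the hypothesis $\pi(w)=\infty$ implies in particular that $w$ is primitive in $F$, so $F=\langle w\rangle * F'$ and hence $G\cong F'$ is free. Conversely, if $G$ is free then a standard deficiency argument combined with a theorem of Magnus forces $w$ to be primitive in $F$, and primitivity of $w$ in $F$ propagates to primitivity of $w$ in every subgroup $H\ni w$ (via the Stallings-graph characterisation of primitive elements), giving $\pi(w)=\infty$. For (ii), rank-one subgroups of $F$ are cyclic, so $\pi(w)=1$ is equivalent to $w=u^n$ with $n\geq 2$; B.~B.~Newman's spelling theorem then identifies this with $G$ having torsion, realised by the image of $u$ as an element of order $n$.

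For (iii), the hypothesis $\pi(w)=2$ provides a rank-two subgroup $H\leq F$ containing $w$ in which $w$ is not primitive. The abstract one-relator group $P_0:=H/\llangle w\rrangle_H$ is two-generator, and is non-free by item (i) applied to the pair $(H,w)$. The technical heart of the step is to show that the natural map $P_0\to G$ is injective, equivalently that $H\cap\llangle w\rrangle_F=\llangle w\rrangle_H$. This Magnus-subgroup-type control for witnesses of the primitivity rank is a consequence of the machinery of \cite{louder_negative_2022}, and exhibits $P_0$ as the desired subgroup $P$ of $G$.

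For (iv), the argument proceeds by invoking recent deep results. When $\pi(w)>2$, joint work of Louder and the author \cite{louder_negative_2022,louder_uniform_2024} shows that $w$ has the \emph{negative immersions} property, a strong local-to-global subgroup condition. Linton \cite{linton_one-relator_2022} then proves that one-relator groups satisfying negative immersions are Gromov-hyperbolic and admit a proper cocompact action on a CAT(0) cube complex, establishing both hyperbolicity and cubulation. The main obstacle across the theorem lies in item (iii), where the classical Magnus-subgroup theorem (which controls subgroups omitting a basis letter) must be replaced by a version that controls the minimal witness of $\pi(w)$; the deeper content of (iv) is already absorbed into the cited references.
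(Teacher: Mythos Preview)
Your proof is correct and follows essentially the same citation-based approach as the paper; you simply expand slightly on the mechanism behind item (iii), whereas the paper cites \cite[Theorem 1.5]{louder_negative_2022} directly. One minor attribution issue: the torsion characterisation in (ii) is the theorem of Karrass--Magnus--Solitar (cf.\ \cite[Proposition 5.17]{lyndon_combinatorial_1977}), not Newman's spelling theorem, which concerns Dehn's algorithm for one-relator groups with torsion.
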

\begin{proof}
Item (i) is a result of the classical theorem of Whitehead that, for non-trivial $w$, $G$ is free if and only if $w$ is primitive \cite[Proposition 5.10]{lyndon_combinatorial_1977}. It is easy to see that $\pi(w)=1$ if and only if $w$ is a proper power in $F$; a classical theorem of Karrass--Magnus--Solitar asserts that this is true exactly when $G$ has torsion \cite[Proposition 5.17]{lyndon_combinatorial_1977}. Item (iii) is a theorem of Louder and the author \cite[Theorem 1.5]{louder_negative_2022}, while item (iv) is a theorem of Linton \cite[Theorem 8.2]{linton_one-relator_2025} (using deep work of Wise \cite{wise_structure_2021}, as well as further work of Louder and the author \cite{louder_uniform_2024}).
\end{proof}
 
\begin{remark}\label{rem: Newman and Wise}
The one-relator group $G$ is also hyperbolic and cubulated when $\pi(w)=1$, by work of Newman \cite{newman_some_1968} and Wise \cite{wise_structure_2021}. However, we will not need either of these facts here.
\end{remark}

The final piece of the jigsaw is a recent result of Gardam--Kielak--Logan, who handled the two-generator, one-relator case of Question \ref{qu: General subgroup question} \cite[Theorem 1.5]{gardam_surface_2023}.

\begin{theorem}[Gardam--Kielak--Logan]\label{thm: GKL theorem}
Let $G$ be a two-generator, one-relator group. If every subgroup of infinite index in $G$ is free, then $G$ is either free or the fundamental group of a closed surface.
\end{theorem}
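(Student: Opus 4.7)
The plan is to reduce to the torsion-free case and then split by hyperbolicity, using Theorem \ref{thm: Main theorem for cubulated hyperbolic groups} in the hyperbolic case and direct structural analysis of Baumslag--Solitar subgroups otherwise.

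First, by Theorem \ref{thm: Structure of one-relator groups}(ii), if the relator $w$ is a proper power then $G$ has non-trivial torsion, yielding a finite, non-free, infinite-index subgroup and contradicting the hypothesis. So $w$ is not a proper power and $G$ is torsion-free; discarding the case where $w$ is primitive (in which case $G$ is free by Theorem \ref{thm: Structure of one-relator groups}(i)), we may assume $\pi(w)\geq 2$.

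Next, suppose $G$ is word-hyperbolic. Then $G$ is cubulated by Theorem \ref{thm: Structure of one-relator groups}(iv), together with the extensions of Wise and Linton that cover the $\pi(w)=2$ case. Theorem \ref{thm: Main theorem for cubulated hyperbolic groups} then produces a one-ended quasiconvex subgroup of infinite index, which is automatically non-free. By hypothesis this is impossible unless $G$ is itself a surface group; in the two-generator case the only candidates are $\Z^2$ and the Klein bottle group.

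The main obstacle is the non-hyperbolic case. Here one would like to apply a Newman--Wise--Linton-type theorem asserting that a torsion-free one-relator group fails to be hyperbolic only if it contains a Baumslag--Solitar subgroup $BS(1,n)$ for some $n\geq 2$. Granting this, the subgroup $\Z[1/n]\subseteq BS(1,n)$ is infinitely generated and non-free (as highlighted in Remark \ref{rem: 2-generator one-relator groups}), and it has infinite index in $G$ automatically: otherwise $G$ would contain an infinitely generated subgroup of finite index, contradicting the finite generation of $G$. This supplies the required non-free infinite-index subgroup. Gardam--Kielak--Logan package the non-hyperbolic analysis uniformly using Newton-polytope and agrarian-invariant methods, which are particularly effective in the two-generator setting because the relevant polytope is planar; my expectation is that reproducing their argument without those homological tools would force a delicate combinatorial classification of two-generator relators with $\pi(w)=2$, and that this is the step where the proof must do genuine work beyond the hyperbolic machinery already developed in the paper.
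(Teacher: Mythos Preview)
The paper does not prove this theorem; it is quoted from \cite[Theorem 1.5]{gardam_surface_2023} and used as a black box in the proof of Theorem~\ref{thm: Main theorem for one-relator groups}. So there is no ``paper's own proof'' to compare against, and your task is really to give an independent proof.

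Your attempted alternative has a structural gap in the hyperbolic branch. For a two-generator one-relator group with $w$ neither primitive nor a proper power, the ambient free group has rank~$2$, so $\pi(w)=2$ automatically; Theorem~\ref{thm: Structure of one-relator groups}(iv), which requires $\pi(w)>2$, therefore \emph{never} applies to the groups under consideration. Your appeal to unspecified ``extensions of Wise and Linton that cover the $\pi(w)=2$ case'' is precisely the missing ingredient: you need a theorem asserting that every hyperbolic two-generator one-relator group is cubulated, and nothing in this paper supplies that. Without it, Theorem~\ref{thm: Main theorem for cubulated hyperbolic groups} cannot be invoked.

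In the non-hyperbolic branch you correctly isolate the needed input---that a torsion-free one-relator group which is not hyperbolic contains a Baumslag--Solitar subgroup---but this is again a deep external result, not available from the paper. You should also treat $BS(1,\pm 1)$ separately: if $G$ contains $\mathbb{Z}^2$ (or the Klein-bottle group) of infinite index you get a non-free subgroup directly, while if it has finite index then $G$ is a surface group by Bieberbach, which is part of the desired conclusion rather than a contradiction. Your sketch only mentions $|n|\geq 2$.

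In short, your proposal reduces the Gardam--Kielak--Logan theorem to two other substantial results neither of which is proved here, so it is a plausible strategy but not a proof. The actual argument of \cite{gardam_surface_2023}, as you note, goes via Newton-polytope and agrarian-invariant techniques specific to the two-generator setting and does not pass through cubulation at all.
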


Combining these results with Theorem \ref{thm: Main theorem for cubulated hyperbolic groups}, we obtain the desired result for all one-relator groups.

\begin{proof}[Proof of Theorem \ref{thm: Main theorem for one-relator groups}]
A one-relator group $G=F/\llangle w\rrangle$ is free if $\pi(w)=0$ or $\pi(w)=\infty$. Since $G$ is infinite and every subgroup of infinite index is free, $G$ is torsion-free, so $\pi(w)>1$. If $\pi(w)>2$ then $G$ is hyperbolic and cubulated by Theorem \ref{thm: Structure of one-relator groups}(iv), so the result follows from Theorem \ref{thm: Main theorem for cubulated hyperbolic groups}. Thus, it remains to handle the case when $\pi(w)=2$.

In this case, $G$ has a two-generator, one-relator subgroup $P$ that is not free by Theorem \ref{thm: Structure of one-relator groups}(iii). The index of $P$ in $G$ is finite by hypothesis, and every subgroup of infinite index in $P$ is free. By Theorem \ref{thm: GKL theorem}, $P$ is either $\Z^2$ or $\Z\rtimes\Z$, so the same is true of $G$ by Bieberbach's theorem.
\end{proof}

\section{Strebel's theorem}\label{sec: Strebel}

Let $G$ be an $n$-dimensional Poincar\'e duality group. Strebel proved that any subgroup of infinite index in $G$ must have strictly smaller cohomological dimension \cite{strebel_remark_1977}, paralleling an older theorem of Whitehead about manifolds \cite{whitehead_immersion_1961}.

\begin{theorem}[Strebel theorem]\label{thm: Strebel's theorem}
Let $G$ be a $PD_n$ group. If $H$ is a subgroup of infinite index in $G$, then $\mathrm{cd}(H)<n$.
\end{theorem}

By Stallings' theorem, Theorem \ref{thm: Main theorem for cubulated hyperbolic groups} can be thought of as a converse to Strebel's theorem when $n=1$ and $G$ is hyperbolic and cubulated. It is therefore natural to wonder whether, if $G$ remains hyperbolic and cubulated, the converse is also true for larger $n$.\footnote{This was stated as an open question in an earlier version of this paper.} The purpose of this section is to note that this converse fails.  I am grateful to Genevieve Walsh and Macarena Arenas for explaining this to me.

The key point is that Strebel's theorem is not optimal. Indeed, the following stronger result is given as an exercise in Brown's book \cite[Exercise 4, p.\ 203]{brown_cohomology_1994}. 

\begin{theorem}[Strong Strebel theorem]\label{thm: Strong Strebel's theorem}
Let $G$ be a group of type $FP$ and cohomological dimension $n$, and suppose $H^n(G,\Z G)$ is finitely generated as an abelian group. If $H$ is a subgroup of infinite index in $G$, then $\mathrm{cd}(H)<n$.
\end{theorem}

With the stronger theorem in hand, we can provide an example of a 3-dimensional group, which is not a $PD_3$ group, but such that every subgroup of infinite index is at most 2-dimensional.

\begin{example}\label{eg: No higher qc Strebel converse}
Let $L$ be a square-free flag triangulation of the 2-dimensional torus (which exists by a lemma of Dranishnikov \cite[Proposition 2.1]{dranishnikov_boundaries_1999}). Let $W_L$ be the corresponding right-angled Coxeter group, and let $G=[W_L,W_L]$ be the commutator subgroup. Then $G$ is a hyperbolic, cubulated group, acting properly and cocompactly on the Davis cube complex $\univ{X}$; let $X$ be the quotient cube complex. The link of every vertex of $\univ{X}$, and hence also of $X$, is isomorphic to the torus $L$.

The group $G$ has geometric and cohomological dimension 3, but is not a Poincar\'e duality group. Indeed, the Gromov boundary $\partial_\infty G$ is the Pontryagin sphere \cite[Remark 4.4(1)]{przytycki_flag-no-square_2009}, which is 2-dimensional with $\check{H}^2(\partial_\infty G;\Z)\cong\Z$ and $\check{H}^1(\partial_\infty G;\Z)\cong\Z^\infty$ .  By the Bestvina--Mess theorem \cite[Corollary 1.3]{bestvina_boundary_1991},
\[
H^*(G;\Z G)\cong \check{H}^{*-1}(\partial_\infty G;\Z)\,,
\]
so $G$ has cohomological dimension 3,  and every subgroup of infinite index has cohomological dimension at most 2 by Theorem \ref{thm: Strong Strebel's theorem}. But $G$ is not $PD_3$, because $H^2(G,\Z G)\cong \check{H}^1(\partial_\infty G;\Z)\neq 0$.
\end{example}

Thus, Theorem \ref{thm: Main theorem for cubulated hyperbolic groups} should actually be thought of as a conditional converse to Theorem \ref{thm: Strong Strebel's theorem} in dimension 2.

\section{Questions}\label{sec: Questions}

This final section contains some open questions that are suggested by the results of this paper.

As explained in \S\ref{sec: Strebel}, Theorem \ref{thm: Main theorem for cubulated hyperbolic groups} can be thought of as a converse to the strong Strebel theorem (Theorem \ref{thm: Strong Strebel's theorem}) in dimension 2, for cubulated hyperbolic groups. Therefore, we can ask whether similar converses hold in higher dimensions.

\begin{question}\label{qu: Higher strengthened Strebel converse}
Let $X$ be a hyperbolic cube complex with $G=\pi_1(X)$ and  $\cd(G)=n>2$. If the cohomological dimension of every subgroup of infinite index in $G$ is less than $n$, does it follow that $H^n(G,\Z G)$ is finitely generated as an abelian group?
\end{question}

The proof of Theorem \ref{thm: Main theorem for cubulated hyperbolic groups} made essential use of the separability of hyperplane stabilisers, which is closely related to the fact that $X$ is virtually special. Removing this condition would necessitate a different approach.

\begin{question}\label{qu: Main question in non-special case}
Let $X$ be a compact, non-positively curved cube complex with every subgroup of infinite index free. Must $\pi_1(X)$ be either a free group or a surface group?
\end{question}

\noindent Combined with Theorem \ref{thm: Main theorem for cubulated hyperbolic groups}, the well known flat-closing conjecture would imply a positive answer.

In a similar vein, removing the cubulation hypothesis from Theorem \ref{thm: Main theorem for cubulated hyperbolic groups} would be a major breakthrough. The following question reiterates \cite[Questions 1.7 and 1.11]{bestvina_questions_????}.

\begin{question}[Gromov, Whyte]\label{qu: Main question for hyperbolic groups}
Let $G$ be an infinite hyperbolic group. If every subgroup of infinite index in $G$ is free, must $G$ be either a free group or a surface group?
\end{question}
\noindent Of course, if every one-ended hyperbolic group had a surface subgroup, then Question \ref{qu: Main question for hyperbolic groups} would have an affirmative answer.

The next question addresses the slight mismatch between the conclusions of Theorems \ref{thm: Main theorem for cubulated hyperbolic groups} and \ref{thm: Main theorem for one-relator groups}.

\begin{question}\label{qu: Refined one-relator statement}
Let $G$ be a two-generator, one-relator group such that every finitely generated subgroup of infinite index is free. Must $G$ be isomorphic to either $\Z$ or a solvable Baumslag--Solitar group?
\end{question}

\noindent (Note that the two two-generator surface groups, $\Z^2$ and $\Z\rtimes\Z$, are $BS(1,1)$ and $BS(1,-1)$ respectively.)

As explained in \S\ref{sec: Cuts and Whitehead's lemma}, any non-positively curved \emph{square} complex can be `unfolded' until Theorem \ref{thm: Generalised Whitehead's lemma} applies, providing an effective geometric computation of the Grushko decomposition.  I would like to extend this method to higher-dimensional cube complexes.\footnote{Touikan's algorithm provides a general computation of the Grushko decomposition for any torsion-free group with solvable word problem \cite{touikan_detecting_2018}. However, a cubical method would be preferable.}

The first step is to recognise whether or not the cube complex $X$ is essential. Sam Shepherd has explained to me that being essential can be recognised algorithmically: the key observation is that an inessential CAT(0) cube complex contains a half-space that is minimal with respect to inclusion, and the bounding hyperplane of a minimal half-space can be recognised  \cite{shepherd_personal_2024}.

Given essential $X$, the next step is to determine whether or not $\pi_1(X)$ splits freely.   The following question asks whether or not the sufficient condition of Theorem \ref{thm: Generalised Whitehead's lemma} can always be applied.

\begin{question}\label{qu: Effective Whitehead}
Let $G$ be a one-ended cubulable group. Is $G\cong\pi_1(X)$, where $X$ is essential and no vertex of $X$ has a link with a separating simplex?
\end{question}

\noindent Shepherd's theorem (Theorem \ref{thm: Shepherd's theorem}) implies that there is an $X$ without separating vertices in the links, but does not provide information about higher-dimensional simplices.  Regardless of the answer to Question \ref{qu: Effective Whitehead}, it is also important to find an algorithmic condition to recognise whether or not $X$ admits 0- or 1-cuts.

Assuming an understanding of free splittings, one may pose similar questions about splittings over $\Z$.  It is not clear if there is an analogue of Theorem \ref{thm: Generalised Whitehead's lemma} for cyclic splittings.

\begin{question}\label{qu: Local cyclic splitting criterion}
Let $X$ be a compact, non-positively curved cube complex with $\pi_1(X)$ one-ended. Is there a condition on the links of vertices of $X$ that guarantees that $\pi_1(X)$ does not split over $\Z$?
\end{question}

\noindent For instance, it seems probable that the techniques of this article can be used to prove that, if $X$ is a hyperbolic square complex and the links of vertices of $X$ are connected without cut vertices or cut pairs, then $\pi_1(X)$ does not split over $\Z$. But it would be better still to provide a local criterion for higher-dimensional cube complexes, or outside the hyperbolic setting.

As a step towards understanding cyclic splittings outside the hyperbolic setting, the next question asks whether the hyperbolicity hypothesis can be removed from Theorem \ref{thm: Periodic slices and cyclic splittings}.

\begin{question}\label{qu: Periodic 2-cuts without hyperbolicity}
Let $X$ be a compact, essential, non-positively curved cube complex without 0- or 1-cuts. If $\pi_1(X)$ splits over $\Z$, must $X$ admit a periodic 2-cut?
\end{question}

\noindent The converse statement, that $\pi_1(X)$ splits over $\Z$ if $X$ admits a periodic 2-cut, follows from the algebraic annulus theorem of Dunwoody--Swenson \cite{dunwoody_algebraic_2000}.

\bibliographystyle{plain}

\begin{thebibliography}{10}

\bibitem{agol_virtual_2013}
Ian Agol.
\newblock The virtual {H}aken conjecture.
\newblock {\em Doc. Math.}, 18:1045--1087, 2013.
\newblock With an appendix by Agol, Daniel Groves, and Jason Manning.

\bibitem{barrett_computing_2018}
Benjamin Barrett.
\newblock Computing {JSJ} decompositions of hyperbolic groups.
\newblock {\em J. Topol.}, 11(2):527--558, 2018.

\bibitem{baumslag_one-relator_2019}
Gilbert Baumslag, Benjamin Fine, and Gerhard Rosenberger.
\newblock One-relator groups: an overview.
\newblock In {\em Groups {S}t {A}ndrews 2017 in {B}irmingham}, volume 455 of
  {\em London Math. Soc. Lecture Note Ser.}, pages 119--157. Cambridge Univ.
  Press, Cambridge, 2019.

\bibitem{baumslag_open_2002}
Gilbert Baumslag, Alexei~G. Myasnikov, and Vladimir Shpilrain.
\newblock Open problems in combinatorial group theory. {S}econd edition.
\newblock In {\em Combinatorial and geometric group theory ({N}ew {Y}ork,
  2000/{H}oboken, {NJ}, 2001)}, volume 296 of {\em Contemp. Math.}, pages
  1--38. Amer. Math. Soc., Providence, RI, 2002.

\bibitem{bestvina_questions_????}
Mladen Bestvina.
\newblock Questions in geometric group theory.
\newblock
  \url{http://www.math.utah.edu/~bestvina/eprints/questions-updated.pdf}.

\bibitem{bestvina_constructing_2015}
Mladen Bestvina, Ken Bromberg, and Koji Fujiwara.
\newblock Constructing group actions on quasi-trees and applications to mapping
  class groups.
\newblock {\em Publ. Math. Inst. Hautes \'{E}tudes Sci.}, 122:1--64, 2015.

\bibitem{bestvina_acylindrical_2019}
Mladen Bestvina, Ken Bromberg, Koji Fujiwara, and Alessandro Sisto.
\newblock Acylindrical actions on projection complexes.
\newblock {\em Enseign. Math.}, 65(1-2):1--32, 2019.

\bibitem{bestvina_boundary_1991}
Mladen Bestvina and Geoffrey Mess.
\newblock The boundary of negatively curved groups.
\newblock {\em J. Amer. Math. Soc.}, 4(3):469--481, 1991.

\bibitem{bowditch_cut_1998}
Brian Bowditch.
\newblock Cut points and canonical splittings of hyperbolic groups.
\newblock {\em Acta Math.}, 180(2):145--186, 1998.

\bibitem{bridson_metric_1999}
Martin~R. Bridson and Andr\'e Haefliger.
\newblock {\em Metric spaces of non-positive curvature}, volume 319 of {\em
  Grundlehren der Mathematischen Wissenschaften {[Fundamental} Principles of
  Mathematical Sciences]}.
\newblock {Springer-Verlag}, Berlin, 1999.

\bibitem{brinkmann_hyperbolic_2000}
P.~Brinkmann.
\newblock Hyperbolic automorphisms of free groups.
\newblock {\em Geom. Funct. Anal.}, 10(5):1071--1089, 2000.

\bibitem{brown_cohomology_1994}
Kenneth~S. Brown.
\newblock {\em Cohomology of groups}, volume~87 of {\em Graduate Texts in
  Mathematics}.
\newblock {Springer-Verlag}, New York, 1994.
\newblock Corrected reprint of the 1982 original.

\bibitem{caprace_geometric_2009}
Pierre-Emmanuel Caprace and Fr\'{e}d\'{e}ric Haglund.
\newblock On geometric flats in the {CAT}(0) realization of {C}oxeter groups
  and {T}its buildings.
\newblock {\em Canad. J. Math.}, 61(4):740--761, 2009.

\bibitem{caprace_rank_2011}
Pierre-Emmanuel Caprace and Michah Sageev.
\newblock Rank rigidity for {CAT}(0) cube complexes.
\newblock {\em Geom. Funct. Anal.}, 21(4):851--891, 2011.

\bibitem{cashen_splitting_2016}
Christopher~H. Cashen.
\newblock Splitting line patterns in free groups.
\newblock {\em Algebr. Geom. Topol.}, 16(2):621--673, 2016.

\bibitem{cashen_line_2011}
Christopher~H. Cashen and Nata{\v{s}}a Macura.
\newblock Line patterns in free groups.
\newblock {\em Geom. Topol.}, 15(3):1419--1475, 2011.

\bibitem{ciobanu_surface_2013}
L.~Ciobanu, B.~Fine, and G.~Rosenberger.
\newblock The surface group conjecture: cyclically pinched and conjugacy
  pinched one-relator groups.
\newblock {\em Results Math.}, 64(1-2):175--184, 2013.

\bibitem{diao_grushko_2005}
{Guo-An} Diao and Mark Feighn.
\newblock The {{G}rushko} decomposition of a finite graph of finite rank free
  groups: an algorithm.
\newblock {\em Geometry and Topology}, 9:1835--1880, 2005.

\bibitem{dicks_whitehead_2017}
Warren Dicks.
\newblock On {W}hitehead's first free-group algorithm, cutvertices, and
  free-product factorizations.
\newblock \texttt{arXiv:1704.05338}, 2017.

\bibitem{dranishnikov_boundaries_1999}
A.~N. Dranishnikov.
\newblock Boundaries of {C}oxeter groups and simplicial complexes with given
  links.
\newblock {\em J. Pure Appl. Algebra}, 137(2):139--151, 1999.

\bibitem{dunwoody_algebraic_2000}
M.~J. Dunwoody and E.~L. Swenson.
\newblock The algebraic torus theorem.
\newblock {\em Invent. Math.}, 140(3):605--637, 2000.

\bibitem{fine_surface_2007}
B.~Fine, O.~G. Kharlampovich, A.~G. Myasnikov, V.~N. Remeslennikov, and
  G.~Rosenberger.
\newblock On the surface group conjecture.
\newblock {\em Sci. Ser. A Math. Sci. (N.S.)}, 15:1--15, 2007.

\bibitem{gardam_surface_2023}
Giles Gardam, Dawid Kielak, and Alan~D. Logan.
\newblock The surface group conjectures for groups with two generators.
\newblock {\em Math. Res. Lett.}, 30(1):109--123, 2023.

\bibitem{genevois_morphisms_2022}
Anthony Genevois.
\newblock Morphisms between right-angled {C}oxeter groups and the embedding
  problem in dimension two.
\newblock {\em Math. Z.}, 300(1):259--299, 2022.

\bibitem{genevois_personal_2024}
Anthony Genevois.
\newblock Personal communication, 2024.

\bibitem{gersten_whitehead_1984}
S.~M. Gersten.
\newblock On {W}hitehead's algorithm.
\newblock {\em Bull. Amer. Math. Soc. (N.S.)}, 10(2):281--284, 1984.

\bibitem{gromov_hyperbolic_1987}
M.~Gromov.
\newblock Hyperbolic groups.
\newblock In {\em Essays in group theory}, volume~8 of {\em Math. Sci. Res.
  Inst. Publ.}, pages 75--263. Springer, New York, 1987.

\bibitem{hagen_weak_2014}
Mark~F. Hagen.
\newblock Weak hyperbolicity of cube complexes and quasi-arboreal groups.
\newblock {\em J. Topol.}, 7(2):385--418, 2014.

\bibitem{hagen_cubulating_2015}
Mark~F. Hagen and Daniel~T. Wise.
\newblock Cubulating hyperbolic free-by-cyclic groups: the general case.
\newblock {\em Geom. Funct. Anal.}, 25(1):134--179, 2015.

\bibitem{hagen_cubulating_2016}
Mark~F. Hagen and Daniel~T. Wise.
\newblock Cubulating hyperbolic free-by-cyclic groups: the irreducible case.
\newblock {\em Duke Math. J.}, 165(9):1753--1813, 2016.

\bibitem{haglund_finite_2008}
Fr\'{e}d\'{e}ric Haglund.
\newblock Finite index subgroups of graph products.
\newblock {\em Geom. Dedicata}, 135:167--209, 2008.

\bibitem{haglund_special_2008}
Fr\'ed\'eric Haglund and Daniel~T. Wise.
\newblock Special cube complexes.
\newblock {\em Geometric and Functional Analysis}, 17(5):1551--1620, 2008.

\bibitem{hatcher_algebraic_2002}
Allen Hatcher.
\newblock {\em Algebraic topology}.
\newblock Cambridge University Press, Cambridge, 2002.

\bibitem{johansson_topologische_1931}
Ingebrigt Johansson.
\newblock {\em Topologische Untersuchungen {\"u}ber unverzweigte
  {\"U}berlagerungsfl{\"a}chen}.
\newblock PhD thesis, University of Oslo, 1931.

\bibitem{khukhro_unsolved_2023}
E.~I. Khukhro and V.~D. Mazurov.
\newblock Unsolved problems in group theory. the {K}ourovka notebook.
\newblock \texttt{arXiv:1401.0300v28}, 2023.

\bibitem{kim_polygonal_2012}
Sang-hyun Kim and Henry Wilton.
\newblock Polygonal words in free groups.
\newblock {\em Q. J. Math.}, 63(2):399--421, 2012.

\bibitem{linton_one-relator_2025}
Marco Linton.
\newblock One-relator hierarchies.
\newblock {\em Duke Math. J.}, 174(4):747--802, 2025.

\bibitem{louder_negative_2022}
Larsen Louder and Henry Wilton.
\newblock Negative immersions for one-relator groups.
\newblock {\em Duke Math. J.}, 171(3):547--594, 2022.

\bibitem{louder_uniform_2024}
Larsen Louder and Henry Wilton.
\newblock Uniform negative immersions and the coherence of one-relator groups.
\newblock {\em Invent. Math.}, 236(2):673--712, 2024.

\bibitem{lyndon_combinatorial_1977}
Roger~C. Lyndon and Paul~E. Schupp.
\newblock {\em Combinatorial group theory}.
\newblock {Springer-Verlag}, Berlin, 1977.
\newblock Ergebnisse der Mathematik und ihrer Grenzgebiete, Band 89.

\bibitem{lyon_incompressible_1980}
Herbert~C. Lyon.
\newblock Incompressible surfaces in the boundary of a handlebody---an
  algorithm.
\newblock {\em Canadian J. Math.}, 32(3):590--595, 1980.

\bibitem{manning_virtually_2010}
Jason~Fox Manning.
\newblock Virtually geometric words and {{W}hitehead's} algorithm.
\newblock {\em Math. Res. Lett.}, 17(5):917--925, 2010.

\bibitem{nakagawa_non-hyperbolic_2014}
Yoshiyuki Nakagawa, Makoto Tamura, and Yasushi Yamashita.
\newblock Non-hyperbolic automatic groups and groups acting on {CAT}(0) cube
  complexes.
\newblock {\em Internat. J. Algebra Comput.}, 24(6):795--813, 2014.

\bibitem{newman_some_1968}
B.~B. Newman.
\newblock Some results on one-relator groups.
\newblock {\em Bull. Amer. Math. Soc.}, 74:568--571, 1968.

\bibitem{niblo_singularity_2002}
G.~A. Niblo.
\newblock The singularity obstruction for group splittings.
\newblock {\em Topology Appl.}, 119(1):17--31, 2002.

\bibitem{olshanskii_infinite_1979}
A.~Ju. Ol'\v{s}anski\u{\i}.
\newblock An infinite simple torsion-free {N}oetherian group.
\newblock {\em Izv. Akad. Nauk SSSR Ser. Mat.}, 43(6):1328--1393, 1979.

\bibitem{przytycki_flag-no-square_2009}
Piotr Przytycki and Jacek \'{S}wi\k{a}tkowski.
\newblock Flag-no-square triangulations and {G}romov boundaries in dimension 3.
\newblock {\em Groups Geom. Dyn.}, 3(3):453--468, 2009.

\bibitem{puder_primitive_2014}
Doron Puder.
\newblock Primitive words, free factors and measure preservation.
\newblock {\em Israel J. Math.}, 201(1):25--73, 2014.

\bibitem{rowlands_relating_2023}
Rowan Rowlands.
\newblock Relating {CAT}(0) cubical complexes and flag simplicial complexes.
\newblock {\em Adv. in Appl. Math.}, 147:Paper No. 102502, 35, 2023.

\bibitem{sageev_ends_1995}
Michah Sageev.
\newblock Ends of group pairs and non-positively curved cube complexes.
\newblock {\em Proc. London Math. Soc. (3)}, 71(3):585--617, 1995.

\bibitem{sageev_periodic_2011}
Michah Sageev and Daniel~T. Wise.
\newblock Periodic flats in {${\rm CAT}(0)$} cube complexes.
\newblock {\em Algebr. Geom. Topol.}, 11(3):1793--1820, 2011.

\bibitem{scott_topological_1979}
Peter Scott and Terry Wall.
\newblock Topological methods in group theory.
\newblock In {\em Homological group theory {(Proc.} Sympos., Durham, 1977)},
  volume~36 of {\em London Math. Soc. Lecture Note Ser.}, pages 137--203.
  Cambridge Univ. Press, Cambridge, 1979.

\bibitem{serre_trees_2003}
Jean-Pierre Serre.
\newblock {\em Trees}.
\newblock Springer Monographs in Mathematics. Springer-Verlag, Berlin, 2003.
\newblock Translated from the French original by John Stillwell, Corrected 2nd
  printing of the 1980 English translation.

\bibitem{shenitzer_decomposition_1955}
Abe Shenitzer.
\newblock Decomposition of a group with a single defining relation into a free
  product.
\newblock {\em Proceedings of the American Mathematical Society}, 6:273--279,
  1955.

\bibitem{shepherd_semistability_2023}
Sam Shepherd.
\newblock Semistability of cubulated groups.
\newblock {\em Math. Ann.}, 387(3-4):1481--1511, 2023.

\bibitem{shepherd_personal_2024}
Sam Shepherd.
\newblock Personal communication, 2024.

\bibitem{sisto_what_2019}
Alessandro Sisto.
\newblock What is a hierarchically hyperbolic space?
\newblock In {\em Beyond hyperbolicity}, volume 454 of {\em London Math. Soc.
  Lecture Note Ser.}, pages 117--148. Cambridge Univ. Press, Cambridge, 2019.

\bibitem{stallings_torsion-free_1968}
John~R Stallings.
\newblock On torsion-free groups with infinitely many ends.
\newblock {\em Annals of Mathematics. Second Series}, 88:312--334, 1968.

\bibitem{stallings_whitehead_1999}
John~R. Stallings.
\newblock Whitehead graphs on handlebodies.
\newblock In {\em Geometric group theory down under ({C}anberra, 1996)}, pages
  317--330. de Gruyter, Berlin, 1999.

\bibitem{stong_diskbusting_1997}
Richard Stong.
\newblock Diskbusting elements of the free group.
\newblock {\em Math. Res. Lett.}, 4(2-3):201--210, 1997.

\bibitem{strebel_remark_1977}
R.~Strebel.
\newblock A remark on subgroups of infinite index in {P}oincar\'e duality
  groups.
\newblock {\em Comment. Math. Helv.}, 52(3):317--324, 1977.

\bibitem{touikan_detecting_2018}
Nicholas W.~M. Touikan.
\newblock Detecting geometric splittings in finitely presented groups.
\newblock {\em Trans. Amer. Math. Soc.}, 370(8):5635--5704, 2018.

\bibitem{whitehead_equivalent_1936}
J.~H.~C. Whitehead.
\newblock On equivalent sets of elements in a free group.
\newblock {\em Ann. of Math. (2)}, 37(4):782--800, 1936.

\bibitem{whitehead_immersion_1961}
J.~H.~C. Whitehead.
\newblock The immersion of an open {$3$}-manifold in euclidean {$3$}-space.
\newblock {\em Proc. London Math. Soc. (3)}, 11:81--90, 1961.

\bibitem{wilton_one-ended_2011}
Henry Wilton.
\newblock One-ended subgroups of graphs of free groups.
\newblock {\em Geom. Topol.}, 16:665--683, 2012.

\bibitem{wilton_rational_2024}
Henry Wilton.
\newblock Rational curvature invariants for 2-complexes.
\newblock {\em Proc. A.}, 480(2296):Paper No. 20240025, 39, 2024.

\bibitem{wise_cubulating_2004}
D.~T. Wise.
\newblock Cubulating small cancellation groups.
\newblock {\em Geom. Funct. Anal.}, 14(1):150--214, 2004.

\bibitem{wise_icm_2014}
Daniel~T. Wise.
\newblock The cubical route to understanding groups.
\newblock In {\em Proceedings of the International Congress of Mathematicians
  2014, \emph{Seoul, Korea, 13--21 August}}, 2014.

\bibitem{wise_structure_2021}
Daniel~T. Wise.
\newblock {\em The structure of groups with a quasiconvex hierarchy}, volume
  209 of {\em Annals of Mathematics Studies}.
\newblock Princeton University Press, Princeton, NJ, [2021] \copyright 2021.

\end{thebibliography}

\Addresses

\end{document}